\documentclass[12pt]{article}
\usepackage{amsmath,amssymb,amsthm,amsfonts,mathrsfs}
\usepackage[alphabetic, initials, lite]{amsrefs} 
\usepackage{appendix}
\usepackage{enumitem,graphics,xcolor,yfonts,colonequals}
\usepackage{stmaryrd}
\usepackage[alphabetic]{amsrefs}
\usepackage[all,cmtip]{xy}
\usepackage[colorlinks,anchorcolor=blue,citecolor=blue,linkcolor=blue,urlcolor=blue,bookmarksopenlevel=2,bookmarksopen=true]{hyperref}
\urlstyle{rm}

\usepackage[margin=1in]{geometry}
\setlength{\parskip}{5pt}

\usepackage{fancyhdr}
\pagestyle{fancy}

\fancyhf{}
\chead{\scriptsize\MakeUppercase\rightmark}
\cfoot{\footnotesize\thepage}

\fancypagestyle{titlepage}
{
	\fancyhf{}

	\fancyfoot[l]{
	\href{https://mathscinet.ams.org/mathscinet/msc/msc2020.html}
		{\emph{2020 Mathematics Subject Classification}}
	    14G05, 14J27, 14J28, 14K15, 14L10
		\\
		\emph{Keywords}: abelian varieties, algebraic groups, elliptic K3 surfaces, rational points.
	}
}

%Remove MR Number in the bibliography
\AtBeginDocument{%
	\def\MR#1{}
}

\newcommand{\bN}{\mathbb{N}}    %Natural numbers
\newcommand{\bZ}{\mathbb{Z}}    %Integers
\newcommand{\bQ}{\mathbb{Q}}    %Rational numbers
    %Real numbers
    %Complex numbers
    %Finite field

\newcommand{\A}{\mathbb{A}}
\newcommand{\G}{\mathbb{G}}

\newcommand{\cC}{\mathcal{C}}
\newcommand{\cE}{\mathcal{E}}
\newcommand{\cJ}{\mathcal{J}}
\newcommand{\cLL}{\mathcal{L}}  %General linear system
\newcommand{\cM}{\mathcal{M}}   %Multisection of an elliptic fibration
\newcommand{\cO}{\mathcal{O}}   %Structure sheaf

\newcommand{\bP}{\mathbb{P}}    %Projective space

\newcommand{\Br}{\mathrm{Br}}
\newcommand{\inv}{\mathrm{inv}}
\newcommand{\Qbar}{\overline{\bQ}}

\newcommand{\kbar}{\overline{k}}

\newcommand{\Aut}{\operatorname{Aut}}   %Automorphism group
\newcommand{\ord}{\operatorname{ord}}   %Order of a group element
\newcommand{\Gal}{\operatorname{Gal}}	%Galois group
\newcommand{\Pic}{\operatorname{Pic}}	%Picard group
 %Neron-Severi group
\newcommand{\Spec}{\operatorname{Spec}}
 %Torsion subgroup
\newcommand{\GL}{\operatorname{GL}}
\newcommand{\Hom}{\operatorname{Hom}}
\newcommand{\Sym}{\operatorname{Sym}}
\newcommand{\im}{\operatorname{im}}
\newcommand{\rk}{\mathrm{rk}}

\newtheorem*{thm*}{Theorem}
\newtheorem*{prop*}{Proposition}
\newtheorem*{cor*}{Corollary}
\newtheorem{thm}{Theorem}[section]
\newtheorem{prop}[thm]{Proposition}
\newtheorem{cor}[thm]{Corollary}
\newtheorem{lemma}[thm]{Lemma}
\newtheorem{conj}[thm]{Conjecture}
\numberwithin{equation}{section}

\theoremstyle{definition}
\newtheorem{defn}[thm]{Definition}

\newtheorem{rmk}[thm]{Remark}

\newcommand{\injects}{\hookrightarrow}
\newcommand{\isom}{\simeq}

 % intersection of a collection
 % binary intersection
 % union of a collection
 % binary union

 % binary direct sum
 % direct sum of a collection

%--------------------Document begins
\begin{document}
\title{Uniform potential density for rational points on algebraic groups and elliptic K3 surfaces}
\author{Kuan-Wen~Lai \and Masahiro~Nakahara}
\date{}

%--------------------Contact information
\newcommand{\ContactInfo}{{
% additional braces for segregating \footnotesize
\bigskip\footnotesize

\bigskip
\noindent K.-W.~Lai,
\textsc{Department of Mathematics \& Statistics\\
University of Massachusetts\\
Amherst, MA 01003, USA}\par\nopagebreak
\noindent\texttt{lai@math.umass.edu}

\bigskip
\noindent M.~Nakahara,
\textsc{Department of Mathematics\\
University of Washington\\
Seattle, WA 98195, USA}\par\nopagebreak
\noindent\texttt{mn75@uw.edu}
}}

\maketitle
\thispagestyle{titlepage}

%--------------------Abstract
\begin{abstract}
A collection of varieties satisfies uniform potential density if each of them contains a dense subset of rational points after extending its ground field by a bounded degree. In this paper, we prove that uniform potential density holds for connected algebraic groups of a fixed dimension over fields of characteristic zero as well as elliptic K3 surfaces over number fields.
\end{abstract}

\setcounter{tocdepth}{2}
\tableofcontents

%--------------------Introduction
\section{Introduction}
\label{sect:intro}

A variety $X$ over a field $k$ satisfies \emph{potential density} if there exists a finite extension $L/k$ such that the set of $L$-rational points $X(L)$ is Zariski dense. For example, abelian varieties over number fields satisfy potential density \cite{HT00}*{Proposition~3.1}, while rational points do not have to be dense over the original field. Bogomolov and Tschinkel \cite{BT00} proved potential density for K3 surfaces $X$ over number fields with an elliptic fibration, though it is not known whether $X(k)$ is dense as soon as it is non-empty. In this paper, we consider whether the degree of the extension $L/k$ can be taken uniformly with respect to the geometric properties of $X$:

\begin{defn}
A collection $\cC$ of varieties satisfies \emph{uniform potential density} if there exists a constant $d_\cC$ such that for any $X/k\in \cC$, there exists a finite extension $L/k$ of degree bounded by $d_\cC$ such that $X(L)$ is Zariski dense.
\end{defn}

We allow the members in $\cC$ to have different ground fields $k$. Also, the constant $d_\cC$ is independent of the ground field. Examples of collections of varieties that satisfy uniform potential density include smooth curves of genus zero (they can be realized as plane conics and so the rational points are dense after a quadratic extension), del Pezzo surfaces (see Proposition \ref{prop:upd_delPezzo}), and torsors under simple linear algebraic groups of a fixed dimension \cite{Tit92}. Our first main theorem is based on the last example:

\begin{thm}
\label{thm:main_algebraicGroup}
Uniform potential density holds for
\begin{itemize}
    \item the collection of torsors under connected linear algebraic groups of a fixed dimension over infinite perfect fields;
    \item the collection of connected algebraic groups of a fixed dimension over fields of characteristic zero.
\end{itemize}

\end{thm}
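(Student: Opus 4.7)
My plan separates into the two parts of the theorem.

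\emph{Part 1 (linear torsors).} Let $P$ be a $G$-torsor for $G$ a connected linear algebraic group of dimension $n$ over an infinite perfect field $k$. Since $G(L)$ is Zariski dense in $G_L$ for any infinite perfect $L$ by Borel's theorem, and since $P_L \isom G_L$ whenever $P$ acquires an $L$-point, the task reduces to bounding, uniformly in $n$, the degree of an extension that kills the class $[P] \in H^1(k,G)$.

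I would use the dévissage $1 \to R_u G \to G \to G^{\mathrm{red}} \to 1$, which is well defined since $k$ is perfect. Connected unipotent groups satisfy $H^1(k,U) = 0$ over any field—by dévissage along a $\G_a$-composition series, together with $H^1(k,\G_a)=0$—so a twisting argument reduces the problem to killing the image of $[P]$ in $H^1(k,G^{\mathrm{red}})$. For the reductive quotient I would invoke Grothendieck's theorem (SGA 3) that every class in $H^1(k,G^{\mathrm{red}})$ has a reduction of structure group to the normalizer $N = N_{G^{\mathrm{red}}}(T)$ of some maximal $k$-torus $T$. The sequence $1 \to T \to N \to W \to 1$ then splits the problem in two: first, kill the image in $H^1(\cdot,W)$ by an extension of degree at most $|W|$, the Weyl group order being bounded in $n$; second, after twisting the remaining class lies in $H^1(\cdot,T')$ for a torus $T'$ of dimension $\le n$, and such a torus splits over an extension of degree at most the maximal order of a finite subgroup of $\GL_n(\bZ)$, after which Hilbert~90 gives the triviality. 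Compounding these extensions yields a bound depending only on $n$; the main technical point is to track the non-abelian twisting carefully so the three degree bounds compose.

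\emph{Part 2 (arbitrary connected groups in characteristic zero).} By Chevalley's structure theorem, $G$ fits in $0 \to H \to G \to A \to 0$ with $H$ connected linear and $A$ abelian, both of dimension $\le n$. Over any infinite $L$, $H(L)$ is dense in $H_L$ by Borel, so density of $G(L)$ in $G$ is equivalent to density of the image of $G(L)$ in $A(L)$. The long exact sequence identifies this image with the kernel of $A(L) \to H^1(L,H)$, and Part~1, applied to the $H$-torsors arising as fibers of $G \to A$ above points of $A(L)$, allows one to realize a finite-index subgroup of $A(L)$ inside the image after a bounded-degree extension. The problem therefore reduces to uniform potential density for abelian varieties of dimension $\le n$ over characteristic-zero fields. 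I expect this to be the main obstacle: the task is to upgrade the iterative argument of Hassett--Tschinkel (adding non-torsion points one at a time until the Zariski closure of the subgroup they generate fills $A$) to a uniform version in which the degree of each successive extension is bounded purely in $n$, independent of the ground field.
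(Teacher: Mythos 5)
Your Part~1 is a correct argument, and it takes a genuinely different route from the paper. After reducing to the reductive quotient (the paper cites Sansuc's lemma for $H^{1}(k,G)\cong H^{1}(k,G/U)$; your d\'evissage through a $\G_a$-filtration plus twisting does the same job over a perfect field), you pass to the normalizer $N=N_{G^{\mathrm{red}}}(T)$ of a maximal torus via the surjectivity of $H^{1}(k,N)\to H^{1}(k,G^{\mathrm{red}})$, then kill the image in $H^{1}(k,W)$ by a degree-$|W|$ point on the corresponding $W$-torsor, and finally split the (possibly twisted) torus using Minkowski's bound and Hilbert~90. All three steps are sound, $|W|$ and the splitting degree of the torus are bounded by $\mathbf{c}_n$, and the composite bound depends only on $\dim G$. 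The paper instead further decomposes the semisimple part into geometrically almost simple factors, inducts on dimension, and invokes Tits's bounds $\delta(G)$ on splitting fields together with a bound on $|Z(G)|$; your normalizer argument is shorter and arguably gives a better bound, at the cost of relying on the (true, but nontrivial) existence of maximal tori in the twisted forms over the ground field.

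Part~2, however, has two genuine gaps. The minor one: ``Part~1, applied to the $H$-torsors arising as fibers of $G\to A$ above points of $A(L)$'' cannot be applied to infinitely many fibers at once --- each fiber $\pi^{-1}(a)$ is its own torsor and Part~1 only furnishes, for each one separately, some bounded-degree extension trivializing it; these extensions need not be compatible, so you do not get a finite-index subgroup of $A(L)$ in the image over a single bounded extension. The paper's fix is to trivialize only \emph{one} fiber, namely $\pi^{-1}(p)$ for a single nondegenerate point $p\in A(L)$, and then observe that every $\pi^{-1}(mp)$ acquires a rational point via the group law (the $m$-th power map $\pi^{-1}(p)^m\to\pi^{-1}(mp)$ and inversion); density of $\{mp\}$ in $A$ then gives density of $G(L)$.

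The major gap is that you explicitly leave uniform potential density for abelian varieties --- which you correctly identify as the heart of Part~2 --- as ``the task'' rather than proving it. This is precisely where the paper's main new input lies: to produce, over an extension of degree bounded only by $g$, a point outside the saturation of a given finitely generated subgroup $\Gamma\subset A(\kbar)$, one applies Zarhin's trick to replace $A$ by the principally polarized $A^{4}\times(A^{\vee})^{4}$, embeds it in projective space with degree $6^{8g}\cdot(8g)!$ bounded only by $g$, cuts down to a curve $C$ of genus $\geq 2$ by hyperplanes over $k$, and invokes the Mordell--Lang conjecture (Faltings--Vojta) to see that $C$ meets the saturation in only finitely many points, so some hyperplane section point of bounded degree escapes it. Iterating this (with degrees tracked) against the images of an already-constructed nondegenerate point under $\Hom(A_i,A_i')$ yields a nondegenerate point on $A$ after at most $g$ steps. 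Without this construction --- or some substitute for it --- the second bullet of the theorem is not established.
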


In particular, the second part of the theorem implies that uniform potential density holds for abelian varieties of a fixed dimension. We also compute an explicit bound $d_{\cC}$ for the degree of the field extension in the case of abelian varieties (see Theorem~\ref{thm:nondeg_abelianVariety}). To prove Theorem~\ref{thm:main_algebraicGroup}, we use Chevalley's structure theorem to divide the problem into two parts: uniform potential density for (1) abelian varieties (Corollary~\ref{cor:UPD_abelianVariety}), and for (2) torsors under connected linear algebraic groups (Corollary~\ref{cor:UPD_torsor_linearAlgGp}). For abelian varieties, we use Zarhin's trick to first embed an abelian variety into a projective space with bounded degree. Then we use the Mordell--Lang conjecture to construct a non-torsion point over a bounded extension of the ground field. From there we use an inductive argument while keeping track of degrees of extensions to construct a rational point that generates a dense subgroup.

For torsors under connected linear algebraic groups, we start with a result of \cite{Tit92} that concerns the splitting fields of geometrically almost simple groups. More precisely, he showed that the degree of field extension to make such a group split can be taken to divide an integer that depends only the type of the group. We then show that one can deduce the uniform potential density for torsors under geometrically almost simple groups, and then to semisimple groups, and finally to connected linear algebraic groups.

Our second main theorem extends the result for elliptic K3 surfaces by Bogomolov and Tschinkel \cite{BT00}.

\begin{thm}
\label{thm:main_ellipticK3}
Uniform potential density holds for the collection of elliptic K3 surfaces over number fields.
\end{thm}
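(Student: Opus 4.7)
The plan is to refine the proof of Bogomolov--Tschinkel \cite{BT00} by tracking the degrees of all field extensions required at each step. Let $\pi : X \to \bP^1$ be an elliptic K3 surface over a number field $k$, with Jacobian fibration $J \to \bP^1$ and fiber class $F$. The goal is to produce, over an extension $L/k$ of uniformly bounded degree, an irreducible $L$-rational multisection $M \subset X_L$ together with a non-torsion section $\tau$ of $J_L \to \bP^1_L$; under the torsor action of $J$ on $X$, the translates $M + n\tau$ for $n \in \bZ$ then yield infinitely many distinct $L$-rational curves on $X$ (distinct because $\tau$ is non-torsion, so the classes $[M + n\tau]$ are pairwise distinct in $\NS(X_{\kbar})$), and hence have Zariski dense union since infinitely many distinct curves cannot lie in a proper closed subset of a surface.

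The key uniformity input is that the geometric N\'eron--Severi lattice $\NS(X_{\kbar})$ of a K3 surface has rank at most $20$. By Minkowski's theorem bounding the orders of finite subgroups of $\GL_n(\bZ)$ purely in terms of $n$, the image of the Galois representation $\Gal(\kbar/k) \to O(\NS(X_{\kbar}))$ has order bounded purely by a function of $n \leq 20$. Therefore, after an extension $L_0/k$ of uniformly bounded degree, the Galois action on $\NS(X_{\kbar})$ becomes trivial, so every geometric N\'eron--Severi class is Galois-invariant over $L_0$.

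To exhibit $M$ and $\tau$, we use that every elliptic K3 admits over $\kbar$ a multisection of uniformly bounded degree, arising for instance from a small effective divisor class provided by Riemann--Roch on K3. Its class being $L_0$-Galois-invariant, such a multisection descends (after passing to the Galois orbit) to an $L_0$-rational curve $M$ of bounded degree. Passing to the residue field of a closed point of $M$ lying over a $k$-rational base point of $\bP^1$ enlarges $L_0$ to a field $L$ of still bounded degree over $k$ with $X(L) \neq \emptyset$; the rational point kills the Brauer obstruction in the exact sequence
\[
0 \to \Pic(X_L) \to \NS(X_{\kbar})^{\Gal(\kbar/L)} \to \Br(L) \to \Br(X_L),
\]
so every element of the geometric Mordell--Weil lattice $\mathrm{MW}(J_{\kbar}/\bP^1_{\kbar})$ descends to an $L$-rational section of $J_L$. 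Provided the geometric Mordell--Weil rank is positive, we extract the required non-torsion $\tau$ and conclude.

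The main obstacle is the case of geometric Mordell--Weil rank zero, where no non-torsion section of $J$ exists even geometrically. Such extremal elliptic K3 surfaces have Picard number $20$ and, by the Shioda--Inose classification, fall into finitely many families indexed by singular fiber configurations. For each such family, we adapt the alternative argument of Bogomolov--Tschinkel, which produces density via bounded-degree rational multisections of $\pi$ together with the uniform potential density for smooth rational curves; the bound on the field of definition of these multisections follows again from the Minkowski argument applied to $\NS(X_{\kbar})$. Assembling the bounds from all branches yields the desired uniform constant $d_\cC$.
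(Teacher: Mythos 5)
Your overall plan (redo Bogomolov--Tschinkel while tracking degrees via Minkowski's bound on finite subgroups of $\GL(n,\bZ)$) is the same as the paper's, and the descent of $\Pic(X_{\kbar})$ after killing the Brauer obstruction with a bounded-degree point is essentially Lemma~\ref{lemma:defineNeronSeveri}. But the core mechanism of your main case is broken. First, your case division rests on the claim that geometric Mordell--Weil rank zero forces $\rho(X_{\kbar})=20$. This is false: by the Shioda--Tate formula $\rho = 2+\sum_v(m_v-1)+\rk\mathrm{MW}$, a very general elliptic K3 with a section and $24$ nodal fibers has $\rho=2$ and Mordell--Weil rank $0$. ``Extremal'' means rank $0$ \emph{and} $\rho=20$; rank $0$ alone is the generic situation. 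So the case you treat as the main one (a non-torsion section $\tau$ of $J_L$) excludes most elliptic K3 surfaces, and the residual case is not confined to $\rho=20$. This is precisely why \cite{BT00} and the paper work with non-torsion \emph{multisections} rather than sections, and the genuinely hard step --- which your proposal has no substitute for --- is producing a non-torsion rational multisection over a bounded extension. The paper does this with the $H^1(\bP^1,\cJ)$ machinery (Proposition~\ref{prop:divideClass}, Lemma~\ref{lemma:genusMultisection}): divide the class $[X]$ by a large prime $p$, use Hassett's invariant $c(J)>0$ to force all $p$-torsion multisections to have positive genus, so that any rational multisection on the auxiliary surface $Y$ is automatically non-torsion, and push it to $X$. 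When $c(J)\le 0$ this fails and $X$ is an isotrivial Kummer-type surface needing a separate argument (case~\ref{c(J)-leq0_Kummer}), a case your trichotomy does not see at all.

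Two further gaps. (i) Your density conclusion ``infinitely many distinct curves $M+n\tau$ have dense union'' only gives density of a countable union of curves over $\kbar$, not density of $X(L)$: each translate could have few $L$-points unless $M(L)$ is infinite, so you must ensure $M$ is a \emph{rational} curve (normalization $\bP^1$) with $M(L)$ dense; the paper extracts such an $M$ as a $(-2)$-curve or a rigid component of a singular fiber (Lemma~\ref{lemma:existenceRatMult}), whereas a ``small effective class from Riemann--Roch'' need not contain a rational, rigid, or $L$-defined member even when its class is Galois-invariant. (ii) In the $\rho=20$ case there are infinitely many singular K3 surfaces, so ``finitely many families'' is not a valid reduction, and the new input needed there is a bound on the field of definition of an infinite-order automorphism (translation by a non-torsion section of an auxiliary fibration); the paper supplies this with Proposition~\ref{prop:automorphism}, showing some power $f^n$ descends once $\Pic$ does. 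Without these ingredients the proposed argument does not close.
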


By an elliptic K3 surface, we mean a K3 surface together with a morphism $X\to \bP^1$ whose generic fiber is a smooth curve of genus one over the function field $k(\bP^1)$. Note that the analogous statement for curves of genus one fails (Proposition~\ref{prop:noUPD_genus-one-curve}). We also compute explicitly the constant $d_\cC$ that one can take for potential density to hold. See Theorem~\ref{thm:ellk3density} for details. Our proof of Theorem~\ref{thm:main_ellipticK3} follows that of potential density in \cite{BT00}. While some of their constructions can be done over the ground field, others require an extension whose degree is unknown. We begin by using a result of \cites{Min87} on finite subgroups of $\GL(n,\bZ)$ to show that the geometric N\'eron--Severi group can be defined over a bounded extension of the ground field. The key problem is then to bound the degree of the field of definition of an elliptic or rational multisection which is used to produce rational points. This is done using lattice theory in Lemma~\ref{lemma:existenceRatMult}.

Theorem~\ref{thm:main_ellipticK3} provides an evidence to the following conjectures in the case of K3 surfaces: let $\cC=\{(X,k)\}$ be a collection of smooth and geometrically integral varieties $X$ over number fields $k$. Let $\Br(X)$ denote the Brauer group of $X$ and define
$$
    \Br_0(X)\colonequals\im(\Br(k)\longrightarrow\Br(X)).
$$
We say $\cC$ satisfies
\begin{itemize}
    \item \emph{uniform boundedness of exponent for Brauer groups (UBEB)} if, for every integer $d>0$, there exists a constant $c_d$ such that $c_d\alpha=0$ for any $\alpha\in\Br(X_L)/\Br_0(X_L)$ where $X/k\in\cC$ and $L/k$ is an extension such that $[L:k]\leq d$;
    \item \emph{Brauer--Manin obstruction is the only one to weak approximation (BMWA)} if for any $X/k\in\cC$ and finite extension $L/k$, we have $X(L)$ dense in $X(\A_L)^{\Br}$.
    \item \emph{uniform potential local solubility (UPLS)} if, there exists a constant $c$ such that for any $X/k\in \cC$, there exists an extension $L/k$ of degree $\leq c$ such that $X_L$ is everywhere locally soluble.
\end{itemize}

\begin{thm}
Let $\cC$ be a collection of smooth projective geometrically integral varieties $X/k$ over number fields such that $H^1(X,\cO_X)=0$ and $\Pic(X_{\kbar})$ is torsion-free of bounded rank. If $\cC$ satisfies UBEB, BMWA, and UPLS then it also satisfies uniform potential density.
\end{thm}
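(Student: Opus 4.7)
The plan is to produce, after a bounded extension of $k$, a field $L$ over which the Brauer--Manin obstruction to weak approximation on $X_L$ vanishes entirely; the hypothesis BMWA will then yield a set of $L$-rational points Zariski dense in $X_L$, while UPLS guarantees non-emptiness in the first place.

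I would first apply UPLS to replace $k$ by an extension $L_1$ of bounded degree with $X(\A_{L_1})\neq\emptyset$. Since $\Pic(X_{\kbar})\isom\bZ^r$ with $r$ uniformly bounded, the continuous action of $\Gal(\kbar/k)$ factors through a finite subgroup of $\GL(r,\bZ)$, whose order is bounded by a theorem of Minkowski \cite{Min87} (the same ingredient used in the proof of Theorem~\ref{thm:main_ellipticK3}). A further extension $L_2/L_1$ of bounded degree therefore trivialises this action. Combined with the vanishing of $\Pic^0$ (which follows from $H^1(X,\cO_X)=0$), the Hochschild--Serre spectral sequence yields
\[
    \Br_1(X_{L_2})/\Br_0(X_{L_2}) \injects H^1(L_2,\Pic(X_{\kbar})) = \Hom_{\text{cts}}(\Gal_{L_2},\bZ^r) = 0,
\]
so that only the transcendental part of the Brauer group remains.

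This transcendental quotient $B\colonequals\Br(X_{L_2})/\Br_0(X_{L_2})\subseteq\Br(X_{\kbar})^{\Gal_{L_2}}$ has exponent at most $c\colonequals c_{[L_2:k]}$ by UBEB. Granted a uniform bound on $|B|$, I would pass to a \emph{splitting extension} $L_3/L_2$ of bounded degree over which every class of $B$ becomes trivial in $\Br(X_{L_3})/\Br_0(X_{L_3})$. Any new Brauer classes appearing over $L_3$ are themselves of exponent controlled by UBEB applied with $d=[L_3:k]$; committing in advance to an overall degree bound $D$ for the final extension $L$ allows one to treat all intermediate $c_d$ uniformly, and a finite iteration of the Minkowski-plus-splitting procedure produces $L$ of degree at most $D$ with $\Br(X_L)/\Br_0(X_L)=0$.

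Over this $L$ the Brauer--Manin set coincides with $X(\A_L)\neq\emptyset$ and has $v$-adically open image in each $X(L_v)$, which is Zariski dense in $X_L$. Applying BMWA then gives $X(L)$ adelically dense in $X(\A_L)^{\Br}$ and hence Zariski dense. The main obstacle is the uniform bound on $|B|$: UBEB controls only its exponent, while a naive estimate $|\Br(X_{\kbar})[c]|\leq c^{b_2-\rho}$ requires bounding the second Betti number, which is not among the hypotheses. Extracting such an order bound---perhaps by exploiting that $B\subseteq\Br(X_{\kbar})[c]^{\Gal}$ under the trivialised Galois action on $\Pic$, and tracking the transcendental cohomology more carefully---is the crux of the argument and where the hidden uniformity of $\cC$ must surface.
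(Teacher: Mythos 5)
There is a genuine gap, and you have correctly located it yourself: your strategy requires annihilating $\Br(X_L)/\Br_0(X_L)$ entirely over an extension of \emph{a priori} bounded degree, and the hypotheses do not supply this. UBEB bounds only the exponent $c$; the group $\Br(X_{\kbar})[c]$ is finite for each $X$ but its order is not uniform over $\cC$ without a bound on $b_2$, which is not assumed; and even with an order bound you would still need a uniform bound on the degree of a field splitting each transcendental class, which is a separate unsolved problem. So the ``finite iteration of the Minkowski-plus-splitting procedure'' cannot be completed.

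The paper's proof sidesteps this entirely by never killing the Brauer group. Let $N$ be the uniform exponent from UBEB. Lemma~\ref{lemma:sujectivity_Brauer} produces an auxiliary field $L/k$ (of \emph{unbounded} degree --- this is harmless) such that for any degree-$N$ extension $K/k$ linearly disjoint from $L$, the restriction $\Br(X)/\Br_0(X)\to\Br(X_K)/\Br_0(X_K)$ is surjective; one then picks such a $K$, and $[K:k]=N$ is the uniform bound. The point is the norm computation: if $(x_v)\in X(\A_k)$ (nonempty by UPLS) has image $(x_w)\in X(\A_K)$ and $\alpha\in\Br(X_K)$ lifts to $\beta\in\Br(X)$, then
$$
  \sum_{w\in\Omega_K}\inv_w\alpha(x_w)
  =\sum_{v\in\Omega_k}\Bigl(\sum_{w\mid v}e_w\Bigr)\inv_v\beta(x_v)
  =N\sum_{v\in\Omega_k}\inv_v\beta(x_v)=0,
$$
since $N\beta\in\Br_0(X)$. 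Hence $X(\A_K)^{\Br}\neq\emptyset$ even though the Brauer group is not trivialized; BMWA plus the finiteness of $\Br(X_K)/\Br_0(X_K)$ (which makes $X(\A_K)^{\Br}$ open in $X(\A_K)$) then gives Zariski density of $X(K)$. Your opening steps (UPLS first, Minkowski to trivialize the Galois action on $\Pic(X_{\kbar})$ and kill $\Br_1/\Br_0$) are fine but not sufficient on their own; replacing the ``split everything'' step by this degree-$N$ corestriction argument is what closes the proof.
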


For K3 surfaces over number fields, both UBEB and BMWA are conjectures \cites{VA17,Sko09}, see \S\ref{sect:BrauerUPD} for a discussion. It would follow from these conjectures that uniform potential density holds for all K3 surfaces over number fields provided that UPLS holds. Our Theorem~\ref{thm:main_ellipticK3} can be taken as further evidence toward these conjectures.

\bigskip
\noindent
\textbf{Organization of the paper.}
In \S\ref{sect:finiteSubgpGLn}, we recall a result due to Minkowski which plays a fundamental role in proving the main theorems and discuss the uniform potential density for del Pezzo surfaces. We prove uniform potential density for algebraic groups in \S\ref{sect:UPD_abelianVariety} and for elliptic K3 surfaces in \S\ref{sect:UPD_ellipticK3}. The relationship between uniform potential density and the Brauer--Manin obstruction is discussed in \S\ref{sect:BrauerUPD}.

\bigskip
\noindent
\textbf{Acknowledgements.}
We thank Zinovy Reichstein for his kind help for
dealing with the case of algebraic groups, and thank Brendan Hassett for his suggestion on improving the proof of the case of elliptic K3 surfaces. We also thank Daniel Loughran for suggesting us including del Pezzo surfaces for uniform potential density and for his comments on the first draft of the paper. We would also like to thank the referees for correcting errors in earlier versions of the paper and giving suggestions on improving the expositions. The second author was sponsored by EPSRC grant EP/R021422/2.

%--------------------Finite subgroups of GL(n,Z)
\section{Finite subgroups of general linear groups}
\label{sect:finiteSubgpGLn}

We begin with the following well-known result due to Minkowski.

\begin{thm}[\cite{Min87}]
\label{thm:minkowski}
For every positive integer $n$, there are only finitely many isomorphism classes of finite subgroups in $\GL(n,\bZ)$.
\end{thm}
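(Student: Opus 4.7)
My plan is to show that the order of any finite subgroup $G\subset\GL(n,\bZ)$ is bounded in terms of $n$ alone, and then invoke the elementary fact that there are only finitely many isomorphism classes of groups of any given order. The bound will come from exhibiting, for a fixed small odd prime $p$ (say $p=3$), an injection $G\injects\GL(n,\bF_p)$ whose target has order $\prod_{i=0}^{n-1}(p^n-p^i)$.

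\textbf{Key lemma.} The crux is to prove that for any odd prime $p$, the reduction homomorphism
\[
    \pi_p\colon\GL(n,\bZ)\longrightarrow\GL(n,\bF_p)
\]
has torsion-free kernel. Suppose for contradiction that $M\in\ker\pi_p$ has prime order $\ell$, and write $M=I+p^kA$ with $k\geq 1$ and $A\in M_n(\bZ)$ having some entry not divisible by $p$. Expanding $M^\ell=I$ by the binomial theorem gives
\[
    \ell\,p^kA+\binom{\ell}{2}p^{2k}A^2+\cdots+p^{\ell k}A^\ell=0.
\]
If $\ell\neq p$, then $\ell$ is a unit mod $p$, and dividing by $p^k$ and reducing mod $p$ forces $\ell A\equiv 0\pmod p$, hence $A\equiv 0\pmod p$, a contradiction. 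If $\ell=p$, the first term is $p^{k+1}A$; for $2\leq j\leq p-1$ the $j$-th term has $p$-adic valuation at least $1+jk\geq 1+2k>k+1$ (since $p\mid\binom{p}{j}$), and the last term has valuation $pk>k+1$ (using $p\geq 3$ and $k\geq 1$). Thus the first term dominates, and dividing by $p^{k+1}$ and reducing mod $p$ once more forces $A\equiv 0\pmod p$, a contradiction.

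\textbf{Conclusion and main obstacle.} Granted the lemma, any finite subgroup $G\subset\GL(n,\bZ)$ embeds into $\GL(n,\bF_3)$, so $|G|\leq\prod_{i=0}^{n-1}(3^n-3^i)$, and the theorem follows since there are only finitely many isomorphism classes of groups of each order. The technical heart is the valuation bookkeeping in the case $\ell=p$; the argument uses in an essential way that $p$ is odd, as evidenced by $-I\in\GL(n,\bZ)$, which has order two and lies in the kernel of $\pi_2$. This is exactly why I would apply the lemma at $p=3$ rather than $p=2$.
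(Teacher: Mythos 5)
Your proof is correct and complete. The paper does not prove this theorem at all --- it is quoted directly from Minkowski with a citation --- so there is no in-paper argument to compare against. What you give is the classical argument: the congruence kernel $\ker\bigl(\GL(n,\bZ)\to\GL(n,\bF_p)\bigr)$ is torsion-free for odd $p$, so every finite subgroup embeds in $\GL(n,\bF_3)$, and finiteness of isomorphism classes follows from the resulting order bound. Your valuation bookkeeping in the case $\ell=p$ is right, and your remark that $-I\in\ker\pi_2$ correctly identifies why $p=2$ must be excluded. One small bonus worth noting: your argument gives more than the qualitative statement --- it yields the explicit bound $\mathbf{c}_n\leq\prod_{i=0}^{n-1}(3^n-3^i)$ for the constant introduced in Definition~\ref{defn:c_n}, which is the quantity the rest of the paper actually uses. (The only stylistic caveat is that the literature sometimes states Minkowski's theorem for conjugacy classes of finite subgroups, which requires the stronger Jordan--Zassenhaus machinery; for the isomorphism-class version stated here, your order bound suffices.)
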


See also \cites{KP02} for a discussion on this theorem. Motivated by this result, we make the following convenient notation which will be used throughout the paper.

\begin{defn}
\label{defn:c_n}
For every positive integer $n$, we define $\mathbf{c}_n$ to be the maximal cardinality among the cardinalities of the finite subgroups in $\GL(n,\bZ)$.
\end{defn}

Notice that the constant $\mathbf{c}_n$ increases as $n$ increases since $\GL(n,\bZ)$ is a subgroup of $\GL(n+1,\bZ)$. Theorem \ref{thm:minkowski} will be critical in proving uniform potential density for many cases later on. As a quick application, we prove that del Pezzo surfaces satisfy uniform potential density. If $X$ is a del Pezzo surface of degree $1\leq d\leq 7$, then the Galois action on $\Pic(X_{k^s})\isom\bZ^{10-d}$ gives rise to a homomorphism
\begin{equation}
\label{eqn:factor_through_GL}
    \Gal(k^s/k)\longrightarrow\GL(10-d,\bZ).
\end{equation}
Because the Picard group is generated by the finitely many $(-1)$-curves on $X_{k^s}$, there exists a finite extension $L/k$ such that $\Pic(X_L) = \Pic(X_{k^s})$ and $\Gal(k^s/L)$ appears as the kernel of \eqref{eqn:factor_through_GL}. In particular, the group $\Gal(L/k)\cong\Gal(k^s/k)/\Gal(k^s/L)$ is isomorphic to a subgroup of $\GL(10-d,\bZ)$. Thus
$$
    [L:k]
    = |\Gal(L/k)|
    \leq\mathbf{c}_{10-d}
    \leq\mathbf{c}_{9}.
$$
The condition $\Pic(X_L) = \Pic(X_{k^s})$ implies that every $(-1)$-curve of $X_{k^s}$ is defined over $L$. Contracting part of them gives a birational morphism $X_L\to\bP^2_L$, whose inverse produces a dense subset of rational points on $X_L$.

\iffalse
In fact, the bound $\mathbf{c}_{9}$ can be largely improved since the Galois action on $\Pic(X_{k^s})$ preserves the canonical class $K_X$ and also the intersection form. As a result, it factors through a much smaller subgroup of $\GL(10-d,\bZ)$ isomorphic to the Weyl group $W(R_{9-d})$ of a root system $R_{9-d}$ classified by the following table (see \cite{Man74}*{Chapter~IV} and also \cite{VA13}*{\S1.5.3}):
\begin{center}
\begin{tabular}{|c|c|c|c|c|c|c|}
   \hline $d$ & $1$ & $2$ & $3$ & $4$ & $5$ & $6$ \\\hline
     $R_{9-d}$ & $E_8$ & $E_7$ & $E_6$  & $D_5$ & $A_4$ & $A_1\times A_2$\\\hline
     $|W(R_{9-d})|$ & $2^{14}\cdot 3^5\cdot 5^2\cdot 7$ & $2^{10}\cdot 3^4\cdot 5\cdot 7$ & $2^7\cdot 3^4\cdot 5$ & $2^7\cdot 3\cdot 5$& $2^3\cdot 3\cdot 5$ & $2^2\cdot 3$ \\\hline
\end{tabular}    
\end{center}
\fi

Using the known properties of del Pezzo surfaces however, the bound for the degree $L/k$ in which we get a dense set of rational points can be vastly improved from $\mathbf{c}_9$.

\begin{prop}
\label{prop:upd_delPezzo}
For a del Pezzo surface $X$ of degree $1\leq d\leq 9$ over an infinite field $k$, we can obtain a Zariski dense subset of rational points by passing to a finite extension $L/k$ of degree bounded by a number depending only on $d$ as listed below:
\begin{center}
\begin{tabular}{|c|c|c|c|c|c|c|c|c|c|}
   \hline 
   $d$ & $1$ & $2$ & $3$ & $4$ & $5$ & $6$ & $7$ & $8$ & $9$ \\
   \hline 
   $[L:k]\leq$ & $240\cdot 4$ & $4$ & $3$ & $4$ & $1$ & $6$ & $1$ & $2$ & $3$ \\
   \hline
\end{tabular}    
\end{center}
\end{prop}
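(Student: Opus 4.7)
The approach is a case-by-case analysis on the degree $d$, in each case producing a small extension $L/k$ over which $X_L$ has a Zariski dense set of rational points.

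The large-degree cases follow from classical facts. A del Pezzo surface of degree $9$ is a two-dimensional Severi--Brauer variety corresponding to a central simple $k$-algebra of degree $3$, so it becomes $\bP^2$ after passing to a splitting field of degree dividing $3$. A del Pezzo surface of degree $8$ is either a blow-up of $\bP^2$ at a $k$-rational point (already $k$-rational) or a smooth quadric surface in $\bP^3$; in the latter case, intersecting with a generic $k$-rational line yields two points defined over at most a quadratic extension, and any such point makes $X_L$ birational to $\bP^2_L$ via projection. For $d = 7$ the unique $(-1)$-curve is Galois-stable, so $X$ is already $k$-rational; for $d = 5$ a classical theorem gives $X(k) \neq \emptyset$ together with the $k$-rationality of $X$. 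For $d = 6$ the Galois action on the hexagon of $(-1)$-curves factors through a subgroup of $W(A_1 \times A_2)$ of order $12$; the stabilizer of a single $(-1)$-curve has index at most $6$, so after an extension of degree $\leq 6$ one such curve becomes defined over $L$ and can be contracted, reducing to the degree-$7$ case.

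For $d \in \{1, 2, 3, 4\}$ the plan is to combine the Galois action on $\Pic(X_{\kbar})$ (which factors through the Weyl group of the relevant root system, exactly as in the factorization through $\GL(10-d,\bZ)$ used above) with known density theorems for del Pezzo surfaces of low degree. A guiding principle, due to Manin and refined by later authors, is that a del Pezzo of low degree has a Zariski dense set of rational points as soon as it has one in sufficiently general position, e.g.\ giving rise to a non-torsion section of an associated elliptic or conic fibration. I would therefore pass to a controlled extension $L/k$ over which either a Galois-stable $(-1)$-curve, a $k$-rational conic pencil, or an appropriate intersection of curves becomes defined, and use it to produce such a point. For $d = 1$, the unique base point of $|-K_X|$ is automatically $k$-rational, and $|-2K_X|$ furnishes (after blowing up this base point) a rational elliptic fibration with section; the bound $240 \cdot 4$ then reflects the maximal orbit size of a $(-1)$-curve under $W(E_8)$ (which is $240$) together with an auxiliary degree-$4$ extension needed to promote a rational point on a smooth fiber to a non-torsion section.

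The main obstacle lies in the low-degree cases $d \leq 2$, where the mere existence of a rational point does not imply density. There one must produce, after a bounded extension, a rational point whose orbit in the associated elliptic or conic fibration is Zariski dense, which typically amounts to constructing a non-torsion section. The uniformity of the degree bound follows from Theorem~\ref{thm:minkowski}, which bounds the Galois action on the Picard lattice, but making the bound explicit (and as small as $960$ for $d = 1$) requires combining the orbit structure of $(-1)$-curves with careful arguments on the Mordell--Weil group of the fibration, and this is the technically demanding step.
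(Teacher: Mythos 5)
Your treatment of the cases $d\geq 5$ (and $d=8,9$) is correct and matches the paper's argument: splitting field of the degree-$3$ central simple algebra for $d=9$, a quadratic point plus stereographic projection for $d=8$, $k$-rationality for $d=5,7$, and descending a $(-1)$-curve over a degree-$\leq 6$ extension for $d=6$. However, for $d\in\{1,2,3,4\}$ you have only sketched a strategy, and the strategy you sketch (producing a non-torsion section of an associated elliptic or conic fibration after a bounded extension) is both unexecuted --- you say yourself that this is ``the technically demanding step'' --- and aimed in a harder direction than necessary. The missing idea is \emph{unirationality}: a del Pezzo surface of degree $\geq 3$ with a rational point is unirational, and a unirational surface over an infinite field has Zariski dense rational points. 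For $d=4$ the anticanonical model is an intersection of two quadrics in $\bP^4$, hence has a closed point of degree $\leq 4$; for $d=3$ it is a cubic surface in $\bP^3$, hence has a point of degree $\leq 3$ \cite{Kol02}. This immediately gives the bounds $4$ and $3$ with no fibration argument. For $d=2$ the anticanonical double cover $X\to\bP^2$ yields a point over a quadratic extension $L$, and by \cite{STVA14}*{Corollary~1.3} a further quadratic extension makes $X$ unirational, giving the bound $4$.

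Your account of the $d=1$ bound is also not right as stated. The factor $4$ in $240\cdot 4$ does not come from ``promoting a rational point on a smooth fiber to a non-torsion section''; rather, one first passes to an extension of degree $\leq 240$ over which one of the $240$ $(-1)$-curves is defined, contracts it to obtain a del Pezzo surface of degree $2$, and then applies the degree-$2$ case, which contributes the remaining factor $4$. Your proposed route through the elliptic fibration $|-K_X|$ and its Mordell--Weil group could in principle be made to work (cf.\ the remark following the proposition in the paper, which gives the improved bound $240$ when $\operatorname{char}(k)\neq 2,3$ by exactly such an argument), but you have not supplied the necessary input --- namely that the $(-1)$-curves are non-torsion sections --- nor the derivation of the stated constant. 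As written, the proof is incomplete for all of $d=1,2,3,4$.
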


\begin{proof}
We divide the proof into cases based on $d$.

Case~$d=9$: in this case, $X$ is a Sever--Brauer surface, which corresponds to (the isomorphism class of) a central simple algebra $A$ of degree $3$ \cite{GS17}*{Theorem~5.2.1}. By Wedderburn's theorem (see, e.g., \cite{GS17}*{Theorem~2.1.3}), $A$ is isomorphic to a matrix ring $M_n(D)$ for some $n\geq 1$ and some division algebra $D$. Notice that the degree of $D$ divides $3$ in our situation. By \cite{GS17}*{Theorem~2.2.7}, the algebra $D$, and thus $A$, admits a splitting field $L/k$ of degree at most $3$. It follows that $X_L\cong\bP^2_L$ and so $X(L)$ is dense.

Case~$d=8$: there are two possibilities, $X$ is isomorphic to either a blow up of $\bP^2$ at a point or a quadric surface. In the former case $X$ is birational to $\bP^2$. In the latter case, we can find a rational point on $X$ by taking a quadratic extension $L/k$, in which case the stereographic projection from the point maps $X_L$ birationally onto $\bP^2_L$.

Case~$d=5,7$: $X(k)$ is dense since $X$ is birational to $\bP^2$, see \cite{VA13}*{Theorem~2.1}.

Case~$d=6$: there are a total of six $(-1)$-curves on $X_{k^s}$, so one of them is defined over an extension $L/k$ of degree at most $6$. Contracting this $(-1)$-curve gives a del Pezzo surface of degree $7$ over $L$, which shows $X(L)$ is dense by Case~$d=7$.

Case~$d=4$: if $X$ has a rational point, then it is unirational and thus contains a dense subset of rational points \cite{Pie12}*{Proposition~5.19}. Since $X$ is isomorphic to the intersection of two quadrics in $\bP^4$, it has a point of degree at most $4$.

Case~$d=3$: the situation is the same as $d=4$, namely, the existence of a rational point is sufficient \cite{Kol02}*{Theorem~1.1}. In this case, $X$ is isomorphic to a cubic surface in $\bP^3$, so the bound can be taken as $3$.

Case~$d=2$: the linear system $|-K_X|$ defines a double cover $X\to\bP^2$, so there exists a rational point over a quadratic extension $L/k$. By \cite{STVA14}*{Corollary~1.3}, $X_L$ becomes unirational over a further quadratic extension $L'/L$, so the bound in this case is $4$.

\iffalse
The citation \cite{STVA14} assumes the existence of a rational point throughout the paper though this is not explicitly mentioned in the statement of \cite{STVA14}*{Corollary~1.3}.
\fi

Case~$d=1$: the number of $(-1)$-curves on $X_{k^s}$ equals $240$, so there is an extension $L/k$ of degree at most $240$ where one of them is defined over $L$. Contracting this $(-1)$-curve gives a degree $2$ del Pezzo surface over $L$. By Case $d=2$, we can get a dense set of rational points over a further extension $L'/L$ of degree at most $4$.
\end{proof}

\begin{rmk}
If $\operatorname{char}(k)\neq 2,3$, the bound for $d=1$ in Proposition~\ref{prop:upd_delPezzo} can be slightly improved to $240$. To see this, first recall that the linear system $|-K_X|$ induces an elliptic fibration $\mathcal{E}\to\bP^1$ with a section. When $\operatorname{char}(k)\neq 2,3$, the linear system $|-3K_X|$ embeds $X$ into the weighted projective space $\bP(2,3,1,1)$ with coordinates $x,y,z,w$ as a sextic hypersurface
$
    y^2 = x^3 + f(z,w)x + g(z,w),
$
where the fibration $\mathcal{E}$ is given explicitly by the projection (see, e.g., \cite{VA13}*{\S1.5.4} or \cite{SvL14}*{\S1})
$$
    \bP(2,3,1,1)\dashrightarrow\bP^1
    :[x:y:z:w]\mapsto [z:w].
$$
Under this model, Salgado and van Luijk proved that all the $(-1)$-curves on $X_{k^s}$ appear as sections of $\mathcal{E}_{k^s}$ \cite{SvL14}*{Lemma~2.2} and none of them is torsion \cite{SvL14}*{Theorem~6.4}. Let $L/k$ be an extension of degree at most $240$ over which one of the $(-1)$-curves $C$ is defined. Then the orbit of $C$ under the group law of the generic fiber produces a dense subset of sections and thus a dense subset of rational points.
\end{rmk}

\iffalse
Over the separable closure $k^s$, we have $\Pic(X_{k^s})\isom\bZ^{10-d}$. As the Picard group is finitely generated, there exists a minimal finite separable extension $L/k$ such that $\Pic(X_L)=\Pic(X_{k^s})$. The Galois action on $\Pic(X_{k^s})$ gives rise to a homomorphism
$$
    \Gal(k^s/k)\longrightarrow W(E_{9-d})
$$
which has $\Gal(k^s/L)$ as the kernel. In particular, $\Gal(L/k)\cong\Gal(k^s/k)/\Gal(k^s/L)$ appears as a subgroup of $W(E_{9-d})$, which is finite as $L/k$ is finite. It follows that
$$
    [L:k]\leq |W(E_{9-d})|.
$$
The condition $\Pic(X_{k^s})^{\Gal(k^s/L)}=\Pic(X_{k^s})$ implies that every $(-1)$-curve on $X_{k^s}$ is defined over $L$ since they are rigid in their classes. Contracting them gives a birational morphism $X_L\to\bP^2_L$ if $d\neq 8$ or $X_L\isom \bP_L^1\times\bP_L^1$ if $d=8$, whose inverse produces a dense subset of rational points on $X_L$.
\fi

%--------------------Uniform potential density for algebraic groups
\section{Uniform potential density for algebraic groups}
\label{sect:UPD_abelianVariety}

In this section, we establish uniform potential density for connected algebraic groups over fields of characteristic zero. We start by studying basic properties about rational points on elliptic curves, then use it to establish uniform potential density for abelian varieties. We then prove uniform potential density for torsors under connected linear algebraic groups. We finish proof of Theorem \ref{thm:main_algebraicGroup} by combining these two results via Chevalley's structure theorem.

Most of the results in this section are proved over fields of characteristic zero, but some statements hold over more general fields.

%----------Genus one curves versus elliptic curves
\subsection{Genus one curves versus elliptic curves}
\label{subsect:UPD_ellipticCurve}

Uniform potential density fails for curves of genus one, however, it holds for elliptic curves, that is, genus one curves that contains a rational point. Let us explain these two facts below.

\begin{prop}
\label{prop:noUPD_genus-one-curve}
The collection of genus one curves over number fields does not satisfy uniform potential density.
\end{prop}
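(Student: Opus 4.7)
My plan is to argue by contradiction: assume a uniform constant $d_\cC$ works for the collection of all genus one curves over all number fields, and then exhibit a single genus one curve $C$ over $\bQ$ whose smallest-degree closed point has degree strictly greater than $d_\cC$. For such a $C$, every finite extension $L/\bQ$ with $[L:\bQ]\le d_\cC$ must satisfy $C(L)=\emptyset$, and in particular $C(L)$ is not Zariski dense, contradicting the assumed bound.

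The key input is the existence, for every positive integer $n$, of a genus one curve $C/\bQ$ whose \emph{index}
$$
    I(C)\colonequals\gcd\{[L:\bQ] : C(L)\ne\emptyset\}
$$
is at least $n$. This is classical in the theory of principal homogeneous spaces: genus one curves with Jacobian $E$ are classified by $H^1(\bQ,E)$, the index of $C$ is a multiple of the \emph{period} (the order of $[C]$ in $H^1(\bQ,E)$), and well-known constructions going back to Lind and Reichardt produce torsors under suitable elliptic curves whose period (and hence index) can be made arbitrarily large; more recent work of Cassels, O'Neil, and Clark on the period--index problem gives sharper statements.

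Once such a $C/\bQ$ with $I(C)>d_\cC$ is in hand, the proof finishes in one line. Indeed, if $L/\bQ$ is a finite extension with $[L:\bQ]\le d_\cC$ and $C(L)$ were nonempty, then by definition of the index $I(C)$ divides $[L:\bQ]$, forcing $I(C)\le [L:\bQ]\le d_\cC$, contradicting the choice of $C$. Hence $C(L)=\emptyset$ for every such $L$, so $C(L)$ fails to be Zariski dense and uniform potential density cannot hold.

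The main obstacle is bibliographic rather than conceptual: one needs to justify the existence of genus one curves over $\bQ$ (or over some number field, which is all that is required) with arbitrarily large index. In the actual write-up I would either cite a standard reference from Weil--Châtelet theory that produces such torsors, or exhibit an explicit family---for instance diagonal ternary curves of Lind type $ax^n+by^n=cz^n$---whose local obstructions at carefully chosen primes force the index to grow beyond any prescribed $d_\cC$.
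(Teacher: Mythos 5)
Your overall strategy is the same as the paper's: reduce everything to the existence of genus one curves of arbitrarily large index, and then observe that a curve of index $I$ has no points over any extension of degree less than $I$. That reduction is carried out correctly (the index divides the degree of any closed point, hence divides $[L:k]$ whenever $C(L)\neq\emptyset$). The difference is that the paper does not treat the existence step as bibliographic: it proves it. Fixing an elliptic curve $E/k$, the Kummer sequence gives an exact sequence $E(k)/nE(k)\to H^1(k,E[n])\to H^1(k,E)[n]$; the middle group is infinite because $k$ is Hilbertian (this is the content of Lemma~\ref{lemma:hilbertianGalois}, proved via realizations of the semidirect products $M^n\rtimes G$), and the left group is finite by weak Mordell--Weil, so $H^1(k,E)[n]$ is infinite for every $n$ and in particular contains classes of arbitrarily large prime order $p$; since the period divides the index, such classes give curves of index at least $p$. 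Your write-up leaves exactly this step to a citation, which is the entire mathematical content of the proposition.

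Two cautions if you pursue your version. First, the existence of classes of arbitrarily large \emph{order} in $H^1(k,E)$ is indeed classical (Shafarevich, Lang--Tate, and later work of Cassels and Clark on period versus index), so a careful citation would close the argument; but the Lind--Reichardt examples you mention only produce period-$2$ counterexamples to the Hasse principle, not large index. Second, your proposed explicit family $ax^n+by^n=cz^n$ is not a family of genus one curves: a smooth plane curve of degree $n$ has genus $(n-1)(n-2)/2$, so only $n=3$ qualifies. As written, neither of your two fallbacks actually delivers the required curves, so you should either reproduce an argument like the paper's cohomological one or pin down a precise reference for torsors of large period.
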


\begin{proof}
Fix an elliptic curve $E$ over a number field $k$. Recall that the genus one curves over $k$ which are isomorphic to $E$ over the algebraic closure $\kbar$ are classified by the Weil--Chatelet group $H^1(k,E)$. For every genus one curve $C$ over $k$, its index $i_C$ is defined as the greatest common divisor of the degrees of closed points on $C$. It is well known that $H^1(k,E)$ is torsion and that the order of $[C]\in H^1(k,E)$ divides $i_C$.

For any integer $n$, the Kummer sequence of $\Gal(\kbar/k)$-modules
$$\xymatrix{
    0\ar[r]
    & E[n]\ar[r]
    & E\ar[r]^-{\times n}
    & E\ar[r]
    & 0
}$$
induces the exact sequence
$$\xymatrix{
    E(k)/n(E(k))\ar[r]
    & H^1(k,E[n])\ar[r]
    & H^1(k,E)[n]
}$$
The group $H^1(k,E[n])$ is infinite. (See Lemma~\ref{lemma:hilbertianGalois}.) Since $E(k)/nE(k)$ is finite by the weak Mordell--Weil theorem \cite{Sil09}*{VIII, Theorem~6.7}, the group $H^1(k,E)[n]$ is infinite. In particular, there exists $[C]\in H^1(k,E)$ of arbitrarily large prime order and so arbitrarily large index. Such a curve $C$ does not have rational points over fields of degree smaller than the index over $k$. Thus the collection of genus one curves does not satisfy uniform potential density.
\end{proof}

Given an abelian group $G$ and a subgroup $H$, we define the saturation of $H$ (in $G$) to be
$$
    \{
        g\in G : \textnormal{there exists } n\in\bZ\textnormal{ such that }ng\in H
    \}.
$$
Uniform potential density for elliptic curves over number fields can be derived from the following more general statement:

\begin{thm}
\label{thm:gamma_ellipticCurve}
Let $E$ be an elliptic curve over a field $k$ of characteristic $0$. Suppose that $\Gamma\subset E(\kbar)$ is a finitely generated subgroup and let $\Gamma_0\subset E(\kbar)$ be its saturation. Then there exists an extension $L/k$ of degree at most $18$ such that $E(L)\not\subset\Gamma_0$. Moreover, if $k$ is a number field, the degree bound can be improved to $2$.
\end{thm}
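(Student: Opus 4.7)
The saturation $\Gamma_0$ inherits the finite $\bZ$-rank of $\Gamma$; call it $r$. The plan is to produce, over bounded-degree extensions of $k$, infinitely many points of $E$ that are $\bZ$-linearly independent modulo $E(\kbar)_{\mathrm{tors}}$. Since $\Gamma_0/\mathrm{torsion}$ has rank $r$, at most $r$ of these points can lie in $\Gamma_0$, so any of the others exhibits an extension $L/k$ of the required degree.

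\textbf{Number field case ($[L:k]\leq 2$).} Consider the hyperelliptic quotient $\pi\colon E\to E/\{\pm 1\}\cong\bP^1$, a degree-$2$ cover ramified over the four images of the $2$-torsion. For each $t\in\bP^1(k)$ off the branch locus, $\pi^{-1}(t)$ is a pair of conjugate points defined over a quadratic extension $k(\sqrt{d_t})$, with the non-trivial Galois element acting as $-1$ on a chosen preimage $P_t$. First I would pick a sequence $t_1,t_2,\ldots\in\bP^1(k)$ so that the classes $d_{t_i}\in k^*/(k^*)^2$ are $\bF_2$-linearly independent and each $P_{t_i}$ is non-torsion. Both conditions are feasible for a number field: $k^*/(k^*)^2$ is infinite-dimensional over $\bF_2$, the map $t\mapsto d_t$ has infinite image in it (by a prime-divisor argument on the Weierstrass polynomial), and only finitely many $t$ yield torsion $P_t$ of any given order. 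Over the compositum $k(\sqrt{d_{t_1}},\ldots,\sqrt{d_{t_n}})$, whose Galois group is $(\bZ/2)^n$ by Kummer theory, applying the involution flipping $\sqrt{d_{t_i}}$ to any relation $\sum_j a_j P_{t_j}\in E(\kbar)_{\mathrm{tors}}$ isolates $2a_iP_{t_i}$ as a torsion point and forces $a_i=0$. Hence the $P_{t_i}$ are $\bZ$-linearly independent modulo torsion. Not all can lie in $\Gamma_0$, and any $P_{t_i}\notin\Gamma_0$ gives the desired quadratic $L=k(\sqrt{d_{t_i}})$.

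\textbf{General characteristic-zero case ($[L:k]\leq 18$).} The same double-cover argument yields $[L:k]\leq 2$ whenever $t\mapsto d_t$ has infinite image in $k^*/(k^*)^2$, which covers most characteristic-zero fields (number fields, finitely generated extensions of $\bQ$, function fields over them). The weaker bound $18$ is needed to accommodate residual fields where quadratic twists are scarce (for instance $\bR$, local fields, algebraically closed fields). In many such cases $E(k)$ is itself uncountable while $\Gamma_0$ remains countable, so $L=k$ trivially works; for the remaining cases I would replace the quadratic-twist construction by a geometric one --- for example embedding $E\hookrightarrow\bP^2$ via $|3O|$ and intersecting with $k$-rational lines to produce conjugate triples of degree at most $3$, possibly combined with $n$-torsion or isogeny constructions --- and book-keep composite degrees to extract the explicit constant $18$.

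\textbf{Main obstacle.} The technical crux is sustaining $\bZ$-linear independence modulo torsion of the produced points simultaneously with the non-torsion requirement. In the number field case the multi-quadratic Galois argument delivers both cleanly, but in the general characteristic-zero case the difficulty is finding a uniform construction that works across all fields --- in particular, handling fields with very few quadratic extensions, where the quadratic-twist mechanism collapses and one must fall back on a more elaborate geometric construction to justify the bound $18$.
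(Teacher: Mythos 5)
Your number-field argument is essentially correct and is a close cousin of the paper's: both extract quadratic points $P$ whose Galois conjugate is $-P$ from the degree-$2$ map $E\to\bP^1$, and both ultimately need Merel's theorem. The paper chooses the quadratic fields (via Hilbert irreducibility) to be linearly disjoint from a field $L_0$ with $\Gamma_0$ contained in the saturation of $E(L_0)$, and deduces directly that $y_i\in\Gamma_0$ forces $2ny_i=0$; you instead prove the $P_{t_i}$ are independent modulo torsion and invoke the rank bound on $\Gamma_0$. Both routes work. Two points to tighten: your phrase ``only finitely many $t$ yield torsion $P_t$ of any given order'' is not what you need --- you must exclude torsion of \emph{all} orders simultaneously, which is exactly where Merel's uniform bound over quadratic extensions of $k$ enters (this is how the paper argues); and the infinitude of the image of $t\mapsto d_t$ in $k^*/(k^*)^2$ does require the prime-divisor or Hilbert-irreducibility input you allude to, though that part is standard.

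The general characteristic-zero case, however, is a genuine gap: you have not identified the mechanism that produces a bounded-degree point \emph{outside} $\Gamma_0$, nor the source of the constant $18$. The paper's proof passes to the abelian surface $X=E\times E$ with $D=E\times\{0\}+\{0\}\times E$; since $D^2=2$, the line bundle $3D$ is very ample and embeds $X$ with degree $(3D)^2=18$, and a smooth curve $C$ cut out by hyperplane sections has genus $10$. The Mordell--Lang conjecture (Faltings' theorem and its characteristic-zero extensions) then says that $C\cap(\Gamma_0\times\Gamma_0)$ is \emph{finite}, while slicing $C$ by $k$-rational hyperplanes produces infinitely many closed points of degree at most $18$; one of these avoids $\Gamma_0\times\Gamma_0$, and projecting to a factor gives the desired point of $E(L)$ with $[L:k]\leq 18$. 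Your fallback --- degree-$3$ points from lines meeting $E\subset\bP^2$, ``possibly combined with $n$-torsion or isogeny constructions'' --- has no analogous finiteness statement to lean on: $E$ itself has genus $1$, so Mordell--Lang gives nothing for $E\cap\Gamma_0=\Gamma_0$, and indeed a line through two points of $\Gamma_0$ meets the cubic in a third point of $\Gamma_0$. The key missing idea is to manufacture a curve of genus at least $2$ inside an abelian variety built from $E$, so that the finite-rank subgroup meets it in only finitely many points; the constant $18$ is then forced by the degree of $E\times E$ under $|3D|$, not something that can be book-kept out of degree-$3$ constructions on $E$. (Your countability remark does dispose of uncountable fields, but a uniform argument covering, e.g., all countable characteristic-zero fields still requires the Mordell--Lang input.)
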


Our approaches over number fields and over fields of characteristic zero are different. Let us start with the case of number fields. The general case will be proved later.

\begin{proof}[Proof of Theorem~\ref{thm:gamma_ellipticCurve} over number fields]
Because $\Gamma$ is finitely generated, there exists a finite extension $L_0/k$ such that $\Gamma_0$ is contained in the saturation of $E(L_0)$.

Let $O$ be the origin of $E$ and let $f\colon E\to\bP^1$ be the double cover defined by $|2O|$. By Hilbert's irreducibility theorem, for any finite extension $L/k$, there exists infinitely many $x\in\bP^1(k)$ such that the fiber $f^{-1}(x)$ is irreducible of degree $2$ over $L$.
Using this fact, we can inductively produce a list of $k$-rational points
$
    \{x_1, x_2, x_3, \dots\}\subset\bP^1(k)
$
such that the splitting fields $L_i$ of the fibers $f^{-1}(x_i)$ satisfy the property:

\vspace{-5pt}
\begin{quote}\it
    For every integer $i\geq 1$, the fiber $f^{-1}(x_i)$ is irreducible over the composite field $L_0L_1\cdots L_{i-1}$.
\end{quote}
\vspace{-5pt}

\noindent As each $L_i/k$, $i\geq 1$, is a quadratic extension, it follows that $L_i\cap L_j = k$ whenever $i,j\geq 0$ are distinct.

For each $x_i\in\bP^1(k)$, the fiber $f^{-1}(x_i)$ splits into two points $y_i, y_i'$ over $L_i$ such that $y_i=-y_i'$ in the group law of $E$. Note that $y_i$ and $y_i'$ are $\Gal(\kbar/k)$-conjugates. Suppose $y_i\in\Gamma_0$. Then there exists $n>0$ such that $ny_i\in E(L_0)$. Because $ny_i$ is defined over $L_i$ and $L_i\cap L_0=k$, the point $ny_i$ is in fact defined over $k$. Since $ny_i$ and $ny_i'$ are $\Gal(\kbar/k)$-conjugates, this implies $ny_i=ny_i'$ and thus $2ny_i=0$. In particular, $y_i\in E(L_i)_\mathrm{tor}$.
By Merel's theorem (see, e.g., \cite{Sil09}*{VIII, Theorem~7.5.1}), there exists an integer $N$ such that $|E(L)_\mathrm{tor}|\leq N$ for any quadratic extension $L/k$. In particular, every point in $E(L_i)_\mathrm{tor}$ where $i\geq1$ has order at most $N$. It follows that
\begin{equation}
\label{eqn:torsionOverQuadraticExt}
    \bigcup_{i\in\bN}E(L_i)_\mathrm{tor}\subset E(\kbar)[N!]
\end{equation}
where $E(\kbar)[N!]$ is the finite set of torsion $\kbar$-points of order dividing $N!$. If $y_i\in\Gamma_0$ for infinitely many $i\in\bN$, we would obtain infinitely many points on the left hand side of \eqref{eqn:torsionOverQuadraticExt}, which is impossible. Therefore, all but finitely many $y_i$ are not in $\Gamma_0$. Hence all but finitely many of the quadratic extensions $L_i/k$ satisfies $E(L_i)\not\subset\Gamma_0$.
\end{proof}

\begin{proof}[Proof of Theorem~\ref{thm:gamma_ellipticCurve} in the general case]
Let us consider the abelian surface $X\colonequals E\times E$ and the divisor $D\colonequals E\times\{0\}+\{0\}\times E$ on $X$. Note that $D^2 = 2$, so $D$ is ample by \cite{Kan94}*{Corollary~2.2}. This implies that $3D$ is very ample \cite{EGM20}*{(2.26)}, so we can embed $X$ into $\bP^n$ as a subvariety of degree $(3D)^2=18$. Bertini's theorem guarantees the existence of a smooth sectional curve $C\subset X$ of degree $18$. One can compute that $C$ has genus $10$ by the adjunction formula.

Define $\Lambda_0\colonequals\Gamma_0\times\Gamma_0\subset X(\kbar)$. According to the Mordell--Lang conjecture (see, e.g., \cite{Maz00} and the references cited there), the intersection $C\cap\Lambda_0$ contains only finitely many points. It follows that, among the infinitely many closed points of degree at most $18$ on $C$ cut out by hyperplanes defined over $k$, there exists at least one not in $\Lambda_0$.
As a result, there exists an extension $L/k$ of degree at most $18$ such that $X_L\cong E_L\times E_L$ has an $L$-rational point $p\notin\Lambda_0$. By projecting $p$ to one of the copies of $E_L$, we obtain an $L$-rational point on $E_L$ not lying in $\Gamma_0$.
\end{proof}

Theorem \ref{thm:gamma_ellipticCurve} has an immediate consequence for uniform potential density for elliptic curves, which is well-known to experts but we include the corollary here for completeness.

\begin{cor}
\label{cor:UPD_ellipticCurve}
Let $E$ be an elliptic curve over a field $k$ of characteristic zero. Then there exists an extension $L/k$ of degree bounded by $18$ such that $E(L)$ contains a non-torsion point and thus is Zariski dense in $E_L$. The bound $18$ can be improved to $2$ if $k$ is a number field. As a consequence, the collection of elliptic curves over fields of characteristic zero satisfies uniform potential density.
\end{cor}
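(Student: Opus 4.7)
The plan is to apply Theorem~\ref{thm:gamma_ellipticCurve} directly, taking $\Gamma=\{0\}\subset E(\kbar)$ to be the trivial subgroup. Then the saturation $\Gamma_0$ is precisely the torsion subgroup $E(\kbar)_{\mathrm{tor}}$, and the theorem hands us an extension $L/k$ with $[L:k]\leq 18$ (respectively, $[L:k]\leq 2$ when $k$ is a number field) such that $E(L)\not\subset E(\kbar)_{\mathrm{tor}}$. In other words, there is an $L$-rational point $P\in E(L)$ of infinite order.

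Next, I would show that such a non-torsion point $P$ already forces Zariski density. The subgroup $\langle P\rangle\subset E(L)$ is infinite, and its Zariski closure $Z$ in $E_L$ is a closed subgroup scheme, since the group operations are Zariski-continuous. The connected component of the identity in $Z$ is a closed connected subgroup of the one-dimensional abelian variety $E_L$, hence is either trivial or all of $E_L$. If it were trivial then $Z$ would be a finite (torsion) subscheme, contradicting $|\langle P\rangle|=\infty$. Therefore $Z = E_L$, and $E(L)\supset\langle P\rangle$ is Zariski dense.

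The last paragraph of the corollary is then immediate: since the bound $18$ (or $2$ over number fields) is absolute and independent of both $E$ and $k$, the collection of elliptic curves over fields of characteristic zero satisfies uniform potential density in the sense of the definition from the introduction. Because the substantive work is already encapsulated in Theorem~\ref{thm:gamma_ellipticCurve}, there is no genuine obstacle here; the only point requiring a brief justification is the passage from \emph{existence of a non-torsion $L$-point} to \emph{Zariski density of $E(L)$}, and this is handled by the subgroup-scheme argument above.
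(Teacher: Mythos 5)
Your proposal is correct and follows essentially the same route as the paper: apply Theorem~\ref{thm:gamma_ellipticCurve} with $\Gamma=\{0\}$, observe that the saturation contains all torsion points so the resulting $L$-point is non-torsion, and conclude density from the infinite cyclic subgroup it generates. The only difference is that you spell out the subgroup-scheme argument for density, which the paper simply asserts.
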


\begin{proof}
Applying Theorem~\ref{thm:gamma_ellipticCurve} with $\Gamma = \{0\}$, one obtains $p\in E(L)$ not lying in $\Gamma_0$ for an extension $L/k$ of degree bounded by $2$ in the number field case and by $18$ in general. Because $\Gamma_0$ contains all torsion points of $E(\kbar)$, the point $p$ is non-torsion. Hence it generates a subset $\{np\mid n\in\bZ\}\subset E(L)$ Zariski dense in $E_L$.
\end{proof}

Let us introduce another corollary of Theorem~\ref{thm:gamma_ellipticCurve}. Recall that an elliptic surface $\mathcal{E}\to C$ over a field $k$ \emph{splits} if there exist an elliptic curve $E_0$ over $k$ and a birational map between elliptic fibrations
$\xymatrix{
    \mathcal{E}\ar@{-->}[r]^-\sim & E_0\times C.
}$
Suppose that $\mathcal{E}$ does \emph{not} split. Then the group $E(k(C))$ is finitely generated by the Mordell--Weil theorem for function fields \cite{Sil94}*{III, Theorem~6.1}.

\begin{cor}
\label{cor:rankup_ellipticCurve}
Suppose that $E$ is an elliptic curve over $k$, where $k$ is
\begin{itemize}
    \item a number field, or
    \item the function field of a curve $C$ over a field of characteristic $0$ such that the associated elliptic surface $\mathcal{E}\to C$ does not split.
\end{itemize}
Then there is an extension $L/k$ of degree at most $2$ in the number field case and at most $18$ in the function field case such that $\rk(E(L)) > \rk(E(k))$.
\end{cor}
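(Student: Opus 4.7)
My plan is to apply Theorem~\ref{thm:gamma_ellipticCurve} directly with $\Gamma = E(k)$. The first thing to verify is that this $\Gamma$ is in fact finitely generated, so that the theorem applies. In the number field case this is the classical Mordell--Weil theorem. In the function field case, this is exactly the role of the non-splitting hypothesis on the associated elliptic surface $\cE \to C$: by the Mordell--Weil theorem for function fields \cite{Sil94}*{III, Theorem~6.1}, recalled just before the statement of the corollary, $E(k(C))$ is finitely generated precisely because $\cE \to C$ does not split.

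With $\Gamma = E(k)$ finitely generated, let $\Gamma_0 \subset E(\kbar)$ denote its saturation. Applying Theorem~\ref{thm:gamma_ellipticCurve} produces an extension $L/k$ of degree at most $2$ in the number field case and at most $18$ in the function field case, together with a point $p \in E(L)$ satisfying $p \notin \Gamma_0$. By the definition of saturation, this means that $np \notin E(k)$ for every nonzero integer $n$.

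The remaining step is to translate this into the desired strict rank inequality. Since the inclusion $E(k) \hookrightarrow E(L)$ is injective, tensoring with $\bQ$ yields an injection $E(k) \otimes_\bZ \bQ \hookrightarrow E(L) \otimes_\bZ \bQ$. Let $\bar p$ denote the image of $p$ in $E(L) \otimes \bQ$. If $\bar p$ were to lie in the image of $E(k) \otimes \bQ$, clearing denominators would give nonzero integers $m, n$ and an element $q \in E(k)$ such that $mnp = mq \in E(k)$, contradicting $p \notin \Gamma_0$. Hence $\bar p$ is $\bQ$-linearly independent from the image of $E(k) \otimes \bQ$, which gives $\rk(E(L)) \geq \rk(E(k)) + 1$, and in particular the strict inequality claimed.

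There is no real obstacle here: the entire content was absorbed into Theorem~\ref{thm:gamma_ellipticCurve}, and the corollary is a direct specialization to the case $\Gamma = E(k)$, combined with the elementary observation that producing a point outside the saturation is exactly what is needed to strictly increase the Mordell--Weil rank.
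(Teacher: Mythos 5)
Your proposal is correct and follows essentially the same route as the paper: both invoke the Mordell--Weil theorem (with the non-splitting hypothesis supplying finite generation in the function field case) to justify taking $\Gamma = E(k)$ in Theorem~\ref{thm:gamma_ellipticCurve}, and then observe that a point outside the saturation forces the rank to increase. Your explicit tensoring-with-$\bQ$ argument for the last step is just a fuller version of what the paper leaves implicit.
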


\begin{proof}
By the Mordell--Weil theorem for number fields or for function fields, where the latter requires $\mathcal{E}\to C$ to be non-split, the group $E(k)$ is finitely generated. Applying Theorem~\ref{thm:gamma_ellipticCurve} with $\Gamma = E(k)$, we conclude that there exists an extension $L/k$ and $p\in E(L)$ such that $p\notin\Gamma_0$. Moreover, the degree of $L/k$ can be taken to be at most $2$ in the number field case and at most $18$ in the function field case. Since $p$ does not lie in the saturation of $E(k)$, we have $\rk(E(L))>\rk(E(k))$.
\end{proof}

%----------Abelian varieties of higher dimensions
\subsection{Abelian varieties of higher dimensions}
\label{subsect:UPD_abelianVariety}

Our proof of unifrom potential density for abelian varieties over a fixed dimension, follows the main ideas for potential density in \cite{HT00}*{Proposition~3.1}.

%-----Existence of a non-torsion point
\subsubsection{Existence of a non-torsion point}
\label{subsubsect:exist_nontorsion}

\begin{lemma}
\label{lemma:embedAbelianVariety}
Let $X$ be a principally polarized abelian variety of dimension $n$ over a field $k$. Then $X$ admits a very ample line bundle $\mathcal{D}$ that embeds $X$ as a subvariety in $\bP^{2^n-1}$ of degree $6^n(n!)$.
\end{lemma}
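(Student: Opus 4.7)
The plan is to fix a line bundle representing the principal polarization on $X$, pass to a suitable tensor power to invoke the classical Lefschetz very-ampleness theorem on abelian varieties, and then read off the dimension of the ambient projective space and the degree of the image directly from Riemann--Roch for abelian varieties.

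Let $\Theta$ be an effective symmetric divisor on $X$ representing the principal polarization. Because the polarization is principal, $h^0(\cO_X(\Theta))=1$, and Riemann--Roch on abelian varieties gives $\chi(\cO_X(\Theta))=\Theta^n/n!=1$, so $\Theta^n=n!$. For every positive integer $k$, the line bundle $\cO_X(k\Theta)$ represents $k$ times the principal polarization; combining Riemann--Roch with Mumford's vanishing theorem then yields
\[
    h^0\bigl(\cO_X(k\Theta)\bigr) = k^n,
    \qquad
    (k\Theta)^n = k^n\cdot n!.
\]

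Lefschetz's theorem asserts that for any ample line bundle $M$ on an abelian variety, $M^{\otimes k}$ is very ample for every $k\geq 3$. Setting $\mathcal{D}=\cO_X(k\Theta)$ for a suitably chosen $k\geq 3$, the complete linear system $|\mathcal{D}|$ embeds $X$ into $\bP^{h^0(\mathcal{D})-1}=\bP^{k^n-1}$ as a subvariety of degree $\mathcal{D}^n=k^n\cdot n!$. Matching the numerical invariants claimed in the lemma amounts to selecting $k$ accordingly and, if required, reducing the ambient dimension via a generic linear projection, which preserves the degree and remains an embedding as long as the target has dimension at least $2n+1$. The argument reduces to Lefschetz's theorem together with Riemann--Roch and Mumford vanishing, so I anticipate no essential obstacle beyond carefully tracking the numerical data and the choice of $k$.
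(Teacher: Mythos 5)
Your overall strategy (Riemann--Roch plus the Lefschetz ``third power is very ample'' theorem) is the same as the paper's, but your very first step contains a genuine gap: you assume the principal polarization is represented by an effective symmetric divisor $\Theta$ \emph{defined over $k$}. Over a non-closed field this can fail. A polarization $\lambda\colon X\to X^\vee$ is a $k$-morphism, and over $\kbar$ one has $\lambda=\varphi_{\mathcal{L}}$ for some $\mathcal{L}\in\Pic(X_{\kbar})$, but $\mathcal{L}$ need not descend to $k$: the sequence $0\to X^\vee(k)\to\Pic(X)\to\Hom^{\Sym}(X,X^\vee)$ is not surjective on the right, and the obstruction to lifting $\lambda$ lives in $H^1(k,X^\vee)$. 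The paper circumvents exactly this by taking $\mathcal{D}\colonequals(\mathrm{id}_X,\lambda)^*\mathcal{P}$, the pullback of the Poincar\'e bundle, which \emph{is} defined over $k$ but satisfies $\varphi_{\mathcal{D}}=2\lambda$, i.e.\ it induces \emph{twice} the principal polarization. This doubling is not cosmetic: it is precisely where the factor $2^n$ in $\chi(\mathcal{D})=2^n$ comes from, and hence why the very ample bundle $3\mathcal{D}$ has degree $(3c_1(\mathcal{D}))^n=3^n2^n(n!)=6^n(n!)$. In your framework the analogous bundle would be $3\Theta$ with degree $3^n(n!)$, and you are forced to treat the multiple $k$ as a free parameter to be ``selected'' to match $6^n(n!)$; that reverse-engineering hides the fact that $6=2\cdot 3$ is dictated by the field-of-definition issue (the $2$) and Lefschetz (the $3$), not by choice.

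Two further points on the numerics. First, with $k=6$ the complete linear system lands in $\bP^{6^n-1}$, not $\bP^{2^n-1}$; your proposed fix by generic linear projection down to $\bP^{2^n-1}$ requires $2^n-1\geq 2n+1$, which fails for $n\leq 2$, so it cannot be invoked uniformly. (To be fair, the paper's own statement of the ambient space $\bP^{2^n-1}$ sits uneasily with the degree computation for $3\mathcal{D}$; the quantity actually used downstream, in Lemma~\ref{lemma:Mordell-Lang}, is only the degree $6^n(n!)$.) Second, your Riemann--Roch and Mumford-vanishing computations $h^0(k\Theta)=k^n$ and $(k\Theta)^n=k^n\cdot n!$ are correct as far as they go, but they are computed for a bundle whose existence over $k$ you have not established, so the argument does not close without the Poincar\'e-bundle construction or an equivalent descent argument.
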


\begin{proof}
For every $\cE\in\Pic(X_{\kbar})$, we will denote by $\varphi_{\cE}$ the isogeny
$$\xymatrix{
    \varphi_{\cE}\colon X\ar[r] &
    X^\vee,\quad x\ar@{|->}[r] & t_x^*\cE\otimes\cE^{-1}
}$$
where $t_x$ is the translation by $x$. Let $\lambda\colon X\longrightarrow X^\vee$ be a principal polarization and $\mathcal{P}$ be the Poincare bundle on $X\times X^\vee$. Then there exists $\mathcal{L}\in\Pic(X_{\kbar})$ such that $\lambda=\varphi_\mathcal{L}$. Moreover, the line bundle
$
    \mathcal{D}\colonequals
    (\mathrm{id}_X,\lambda)^*\mathcal{P}
    \in\Pic(X)
$
is ample and satisfies \cite{EGM20}*{Proposition~11.1}
$$
    \varphi_\mathcal{D}
    =2\lambda
    =2\varphi_\mathcal{L}
    \in\Hom^{\Sym}(X,X^\vee).
$$
Using the exact sequence
$
    0\to X^\vee(k)\to\Pic(X)\xrightarrow{\varphi} \Hom^{\Sym}(X,X^\vee),
$
we conclude that
\begin{equation}
\label{eqn:M=L2N}
    \mathcal{D}=\mathcal{L}^{\otimes2}\otimes\mathcal{N}
\end{equation}
for some $\mathcal{N}\in\Pic^0(X)=X^\vee(k)$.

The line bundle $3\mathcal{D}$ is very ample as $\mathcal{D}$ is ample \cite{EGM20}*{(2.26)} and thus defines an embedding $X\hookrightarrow\bP^\ell$. To compute the degree of $X$ in $\bP^\ell$, we use \eqref{eqn:M=L2N} and the Riemann--Roch formula for abelian varieties to get
\begin{equation}
\label{eqn:DinNA}
    \chi(\mathcal{D})
    =c_1(\mathcal{D})^n/n!
    =(2c_1(\mathcal{L}) + c_1(\mathcal{N}))^n/n!.
\end{equation}
We have $c_1(\mathcal{N})=0$ and also
$
    c_1(\mathcal{L})^n/n!
    =\chi(\mathcal{L})
    =\sqrt{\deg\lambda}=1.
$
Plugging these into \eqref{eqn:DinNA}, we obtain $\chi(\mathcal{D})=2^n$. Therefore, $\mathcal{D}$ embeds $X$ into $\bP^{2^n-1}$ with degree given by
$$
    \deg(X)
    = (3c_1(\mathcal{D}))^n
    = 3^n\chi(\mathcal{D})(n!)
    = 3^n2^n(n!)
    = 6^n(n!).
$$
\end{proof}

The following lemma is an analog of Theorem~\ref{thm:gamma_ellipticCurve} for abelian varieties.

\begin{lemma}
\label{lemma:Mordell-Lang}
Let $A$ be an abelian variety of dimension $g\geq 2$ over a field $k$ of characteristic zero. Suppose that $\Gamma\subset A(\kbar)$ is a finitely generated subgroup and let $\Gamma_0$ be its saturation. Then there exists an extension $L/k$ of degree at most $6^{8g}\cdot (8g)!$ such that $A(L)\not\subset\Gamma_0$.
\end{lemma}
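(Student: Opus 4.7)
My plan is to generalize the proof of Theorem~\ref{thm:gamma_ellipticCurve} in the general case by working inside a principally polarized abelian variety associated to $A$. First, I would invoke Zarhin's trick: the abelian variety $X \colonequals (A \times A^\vee)^4$ of dimension $n \colonequals 8g$ admits a principal polarization. Applying Lemma~\ref{lemma:embedAbelianVariety} to $X$ yields a very ample embedding $X \hookrightarrow \bP^{2^n - 1}$ realizing $X$ as a subvariety of degree $d \colonequals 6^n \cdot n! = 6^{8g} \cdot (8g)!$. Since $k$ is infinite, iterated Bertini produces $n-1$ hyperplanes $H_1, \ldots, H_{n-1} \subset \bP^{2^n - 1}$ defined over $k$ whose intersection $C \colonequals X \cap H_1 \cap \cdots \cap H_{n-1}$ with $X$ is a smooth connected projective curve of degree $d$.

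Next, I would transfer the problem to $A$ via the projection $\pi \colon X \to A$ onto the first factor. Since $\ker \pi$ has dimension $7g$ and $g \geq 2$, a dimension count shows that a general such $C$ is not contained in any fiber of $\pi$, so $C' \colonequals \pi(C) \subset A$ is a curve. By the weak Lefschetz theorem applied iteratively along the chain of ample hyperplane sections $X \supset X \cap H_1 \supset \cdots \supset C$, the Albanese morphism $J(C) \to X$ is surjective. Composing with $\pi$ yields a surjection $J(C) \twoheadrightarrow A$, which forces the smallest abelian subvariety of $A$ containing a translate of $C'$ to be $A$ itself. In particular, $C'$ is not contained in any translate of a proper abelian subvariety of $A$.

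With this non-degeneracy in hand, the Mordell--Lang conjecture (Faltings) applied to $C' \subset A$ and the finite-rank subgroup $\Gamma_0 \subset A(\kbar)$ implies that $C' \cap \Gamma_0$ is finite, since any infinite component of the intersection would force $C'$ to contain a translate of a positive-dimensional abelian subvariety. As $\pi|_C$ is a finite morphism of smooth projective curves, $C \cap \pi^{-1}(\Gamma_0) = (\pi|_C)^{-1}(C' \cap \Gamma_0)$ is also finite. Since $k$ is infinite, varying $k$-rational hyperplanes of $\bP^{2^n - 1}$ produces infinitely many closed points on $C$ of degree at most $d$; at least one such point $p$ avoids this finite set. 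Its residue field $L$ then satisfies $[L:k] \leq d$, and $\pi(p) \in A(L) \setminus \Gamma_0$, as required.

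The main obstacle is establishing the finiteness of $C \cap \pi^{-1}(\Gamma_0)$. A direct application of Mordell--Lang on $X$ does not suffice because $\pi^{-1}(\Gamma_0)$ has infinite rank in $X(\kbar)$. The Lefschetz-theoretic step circumvents this by transferring the problem to $A$ and the finite-rank group $\Gamma_0$, but it requires checking that the image $C'$ is non-degenerate in $A$; this verification, via surjectivity of $J(C) \twoheadrightarrow A$, is the most delicate point in the argument.
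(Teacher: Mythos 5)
Your proof is correct, and in its key step it takes a genuinely different route from the paper's. Both arguments begin identically: Zarhin's trick gives a principal polarization on the $8g$-dimensional variety $X$, Lemma~\ref{lemma:embedAbelianVariety} embeds it with degree $6^{8g}\cdot(8g)!$, and iterated hyperplane sections over $k$ produce a curve $C$ together with infinitely many closed points of degree at most $\deg(C)$. The divergence is in how finiteness of the ``bad'' set on $C$ is obtained. The paper stays inside $X$: it applies Mordell--Lang to $C$ and the saturation of the finite-rank product subgroup $\Gamma_0^4\times\varphi(\Gamma_0)^4$ (for a chosen isogeny $\varphi\colon A\to A^\vee$), which forces a preliminary case split --- if $X$ contains an elliptic curve over $k$ one falls back on Theorem~\ref{thm:gamma_ellipticCurve}, and otherwise one argues that $C$ has genus at least $2$ --- and then a final step using the projections and the dual isogeny $\varphi^\vee$ to convert a point of $X(L)$ outside the saturated product group into a point of $A(L)$ outside $\Gamma_0$. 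You instead push the curve down to $A$ and apply Mordell--Lang to $C'=\pi(C)$ and the finite-rank group $\Gamma_0$ itself; the hypothesis you must verify is that $C'$ is not contained in a translate of a proper abelian subvariety, which you get from the surjection $J(C)\twoheadrightarrow X\twoheadrightarrow A$ supplied by the weak Lefschetz theorem (equivalently, the fact that an ample complete-intersection curve generates the ambient abelian variety). This buys you a cleaner endgame --- no case split on elliptic curves, no auxiliary isogeny or dual-isogeny manipulation, and the same degree bound --- at the cost of one extra standard input (Lefschetz on $\pi_1$, or Albanese surjectivity for ample curves). Two small remarks: your dimension-count justification that $C$ is not contained in a fiber of $\pi$ is redundant, since surjectivity of $J(C)\to A$ already forces $\pi|_C$ to be nonconstant; and you should note explicitly that $C'$ may be singular, which is harmless since Mordell--Lang applies to arbitrary closed subvarieties and an irreducible curve containing a translate of a positive-dimensional abelian subvariety must equal it, contradicting your non-degeneracy statement.
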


\begin{proof}
Fix an isogeny $\varphi\colon A\longrightarrow A^\vee$ and define $X\colonequals A^4\times (A^\vee)^4$. Consider the finitely generated subgroup
$$
    \Gamma_X\colonequals
    \Gamma_0^4\times\varphi(\Gamma_0)^4\subset X(\kbar).
$$
and denote by $(\Gamma_X)_0\subset X(\kbar)$ its saturation. Let us first prove that there exists a finite extension $L/k$ of degree at most $6^{8g}\cdot(8g)!$ such that $X(L)\not\subset(\Gamma_X)_0$. In the case that $X$ contains an elliptic curve $E$ as a subgroup over $k$, we can apply Theorem~\ref{thm:gamma_ellipticCurve} to $E$ and the subgroup $E(\kbar)\cap(\Gamma_X)_0$. This gives us an extension $L/k$ of degree at most $18<6^{8g}\cdot (8g)!$ such that $E(L)$ is not contained in the saturation of $E(\kbar)\cap(\Gamma_X)_0$ in $E(\kbar)$, which implies that $X(L)\not\subset(\Gamma_X)_0$, as desired. Therefore, we assume that $X$ does not contain an elliptic curve over $k$ below.

Due to Zarhin's trick \cite{EGM20}*{Theorem~11.29}, the product $X=A^4\times (A^\vee)^4$ admits a principal polarization. Note that $X$ has dimension $8g$, so Lemma~\ref{lemma:embedAbelianVariety} implies that $X$ can be realized as a projective variety of degree $6^{8g}\cdot (8g)!$. Let us cut $X$ with hyperplanes over $k$ subsequently to get a curve $C\subset X$, which has genus at least $2$ since $X$ contains no elliptic curve by hypothesis. Moreover, we may assume that $C$ is smooth and passes through the origin of $X$ due to Bertini's theorem. In this setting, the intersection $C(\kbar)\cap(\Gamma_X)_0$ is finite due to the Mordell--Lang conjecture. (See, e.g., \cite{Maz00}.) On the other hand, we can obtain infinitely many points on $C$ of degrees at most $\deg(C) = 6^{8g}\cdot (8g)!$ by further cutting $C$ with hyperplanes over $k$. Hence there exists a point $p\in C$ not in $(\Gamma_X)_0$ whose residue field $L\colonequals k(p)$ satisfies $[L:k]\leq 6^{8g}\cdot (8g)!$. In particular, $X(L)\not\subset(\Gamma_X)_0$.

Finally, take any $p\in X(L)$ not lying in $(\Gamma_X)_0$. If all the projections $X\to A$ and $X\to A^\vee$ map $p$ to $\Gamma_0$ and $\varphi(\Gamma_0)$, respectively, then $p$ belongs to $\Gamma_X$, a contradiction. Hence there exists a projection $X\to A$ or $X\to A^\vee$ that maps $p$ to an $L$-point $q$ not in $\Gamma_0$ or not in $\varphi(\Gamma_0)$, respectively. In the former case, we are done. In the latter case, the dual isogeny $\varphi^\vee\colon A^\vee\to A$ maps $q$ to $\varphi^\vee(q)\in A(L)$ not lying in $\Gamma_0$, which completes the proof.
\end{proof}

\iffalse
OLD PROOF:

Suppose that $X$ contains an elliptic curve $E$ as a subgroup over $k$. In this case, we can apply Theorem~\ref{thm:gamma_ellipticCurve} to $E$ and the subgroup $E(\kbar)\cap(\Gamma_X)_0$. This gives us an extension $L/k$ of degree at most $18<6^g(g!)$ such that $E(L)$ is not contained in the saturation of $E(\kbar)\cap(\Gamma_X)_0$ in $E(\kbar)$, which implies that $X(L)\not\subset(\Gamma_X)_0$. Therefore, we assume that $X$ does not contain an elliptic curve over $k$ for the rest of the proof.

By Lemma~\ref{lemma:embedAbelianVariety}, we can consider $X$ as a subvariety of $\bP^n$ of degree $6^g(g!)$. Cutting $X$ with hyperplanes subsequently gives a curve $C\subset X$ of degree $6^g(g!)$. We may assume that $C$ is smooth and passes through the origin of $X$ due to Bertini's theorem. Then $C$ has genus at least $2$ since we assume that $X$ contains no elliptic curve. It follows that $C(\kbar)\cap(\Gamma_X)_0$ is finite due to the Mordell--Lang conjecture. (See, for example, \cite{Maz00}.)

On the other hand, intersecting $C$ with hyperplanes gives infinitely many points of degrees at most $\deg(C) = 6^g(g!)$ on $C$. Hence, there exists a point $p\in C$ with residue field $L\colonequals k(p)$ such that $[L:k]\leq 6^g(g!)$ and $p$ does not lie in $(\Gamma_X)_0$.
\fi

\begin{prop}
\label{prop:UPD_simpleAbelianVariety}
Let $A$ be an abelian variety of dimension $g$ over a field $k$ of characteristic zero. Then there exists an extension $L/k$ of degree bounded by a constant $d_g$ as given below such that $A(L)$ contains a non-torsion point:
\begin{itemize}
    \item if $g=1$, we can let $d_g = 2$ when $k$ is a number field and $d_g = 18$ in general.
    \item if $g\geq 2$, we can let $d_g = 6^{8g}\cdot(8g)!$.
\end{itemize}
In particular, $A(L)$ is dense in $A_L$ provided that $A$ is geometrically simple.
\end{prop}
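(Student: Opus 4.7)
The plan is to deduce this proposition directly from the preceding results, essentially by specializing to the trivial subgroup $\Gamma=\{0\}$, whose saturation $\Gamma_0$ is exactly the torsion subgroup $A(\kbar)_\mathrm{tor}$.

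First I would split into cases by dimension. When $g=1$, the statement is already contained in Corollary~\ref{cor:UPD_ellipticCurve}: taking $\Gamma=\{0\}$ in Theorem~\ref{thm:gamma_ellipticCurve} yields an $L/k$ of degree at most $2$ (number field case) or $18$ (general case) with an $L$-rational point outside $\Gamma_0=E(\kbar)_\mathrm{tor}$. When $g\geq 2$, I would apply Lemma~\ref{lemma:Mordell-Lang} with $\Gamma=\{0\}\subset A(\kbar)$. Since the saturation of the trivial subgroup is precisely $A(\kbar)_\mathrm{tor}$, the lemma furnishes an extension $L/k$ of degree at most $6^{8g}\cdot(8g)!$ and a point $p\in A(L)$ with $p\notin A(\kbar)_\mathrm{tor}$. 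This handles the existence of a non-torsion $L$-point with the claimed degree bound.

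For the final assertion under the additional assumption that $A$ is geometrically simple, let $p\in A(L)$ be the non-torsion point produced above and let $Z$ denote the Zariski closure of the cyclic subgroup $\{np:n\in\bZ\}$ in $A_L$. Then $Z$ is an algebraic subgroup of $A_L$, and its neutral component $Z^0$ is an abelian subvariety of positive dimension because $p$ is non-torsion. Passing to $\kbar$, the base change $(Z^0)_{\kbar}$ is a positive-dimensional abelian subvariety of $A_{\kbar}$; by geometric simplicity of $A$, it must equal $A_{\kbar}$. Hence $Z=A_L$, so $A(L)$ is Zariski dense in $A_L$.

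No step here requires new ideas beyond what has already been assembled: Theorem~\ref{thm:gamma_ellipticCurve} and Lemma~\ref{lemma:Mordell-Lang} do the heavy lifting (via Zarhin's trick, the embedding of Lemma~\ref{lemma:embedAbelianVariety}, and Mordell--Lang), while the density statement is a standard application of geometric simplicity. The only mildly subtle point is remembering that, to upgrade "non-torsion point exists" to "rational points are Zariski dense," one should argue with the connected component of the Zariski closure of $\langle p\rangle$ rather than the closure itself, since a priori the closure could have finitely many components of positive dimension. This is the one place I would take care to spell out carefully.
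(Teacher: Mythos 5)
Your proof is correct and, for the main assertion, identical to the paper's: both cases follow by taking $\Gamma=\{0\}$ in Theorem~\ref{thm:gamma_ellipticCurve} (for $g=1$) and Lemma~\ref{lemma:Mordell-Lang} (for $g\geq 2$), noting that the saturation of the trivial subgroup is the torsion subgroup. The only divergence is in the final density claim: the paper invokes Faltings' theorem on subvarieties of abelian varieties (via Mazur's survey) to conclude that the Zariski closure of $\langle p\rangle$ is all of $A_L$, whereas you use the elementary fact that the Zariski closure of a subgroup is an algebraic subgroup, pass to its identity component, and apply geometric simplicity over $\kbar$. Your route is more self-contained and avoids a deep theorem that is not needed at this point (Mordell--Lang has already been used upstream, in Lemma~\ref{lemma:Mordell-Lang}, to produce the point $p$); your care about arguing with the neutral component rather than the closure itself is exactly right.
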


\begin{proof}
The cases $g=1$ and $g\geq 2$ follow respectively from Theorem~\ref{thm:gamma_ellipticCurve} and Lemma~\ref{lemma:Mordell-Lang} by taking $\Gamma = \{0\}$. If $A$ is geometrically simple, then Faltings' theorem on subvarieties of abelian varieties (see \cite{Maz00}*{Theorem~3.1}) says that the Zariski closure of the subgroup generated by $p$ in $A_L$ coincides with $A_L$. Hence $A(L)$ is dense in $A_L$.
\end{proof}

%-----Existence of a nondegenerate point
\subsubsection{Existence of a nondegenerate point}
\label{subsubsect:exist_nondegenerate}

\begin{defn}
For an abelian variety $A$ over a field $k$, we say a rational point $p\in A(k)$ is \emph{nondegenerate} if the subgroup generated by $p$ is Zariski dense in $A$.
\end{defn}

\begin{lemma}
\label{lemma:dimensionGrowth}
Let $A$ and $A'$ be abelian varieties over a field $k$ of characteristic zero and let $g'\colonequals\dim(A')$. Suppose that $A$ contains a nondegenerate point $p\in A(k)$. Then there exists an extension $L/k$ of degree bounded by $6^{8g'}\cdot (8g')!$ together with a point $p'\in A'(L)$ such that the Zariski closure
\[
    B\colonequals\overline{\bZ(p,p')}\subset (A\times A')_L
\]
is an abelian subvariety with $\dim(B)>\dim(A)$.
\end{lemma}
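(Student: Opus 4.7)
The plan is to find $p' \in A'(L)$ outside an appropriate ``bad'' locus; avoiding this locus will force the Zariski closure $B = \overline{\bZ(p,p')}$ to have dimension strictly greater than $\dim A$. The bad locus should capture the situation where $(p, p')$ lies, up to isogeny and torsion, on the graph of a homomorphism $A \to A'$, and so I would encode it as the saturation $\Gamma_0$ of the subgroup
$$
    \Gamma \colonequals \{\phi(p) : \phi \in \Hom_{\kbar}(A, A')\} \subset A'(\kbar).
$$
Since $\Hom_{\kbar}(A, A')$ is a finitely generated abelian group, $\Gamma$ is a finitely generated subgroup of $A'(\kbar)$. Lemma~\ref{lemma:Mordell-Lang} applied to $A'$ (supplemented by Theorem~\ref{thm:gamma_ellipticCurve} in the case $g' = 1$, whose bound $18$ is dominated by $6^{8}\cdot 8!$) then produces an extension $L/k$ of degree at most $6^{8g'}\cdot (8g')!$ together with a point $p' \in A'(L)$ lying outside $\Gamma_0$.

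The next step is to verify that such a $p'$ yields $\dim B > \dim A$. In characteristic zero, $B$ is a reduced closed subgroup scheme of $(A\times A')_L$, whose identity component $B^0 \subset B$ is an abelian subvariety. Since $p$ is nondegenerate in $A$ and $B/B^0$ is finite, the first projection restricts to a surjection $\pi_1 \colon B^0 \to A$. Assume for contradiction that $\dim B = \dim A$; then $\pi_1|_{B^0}$ is an isogeny of some degree $d$, admitting a quasi-inverse $\alpha \colon A \to B^0$ with $\pi_1\alpha = [d]_A$ and $\alpha\pi_1 = [d]_{B^0}$. Setting $\phi \colonequals \pi_2 \circ \alpha \in \Hom(A, A')$ and $N \colonequals [B : B^0]$, the point $N(p,p')$ lies in $B^0$, and applying $\alpha\circ\pi_1$ to it and projecting via $\pi_2$ yields the identity $dN p' = N\phi(p) \in \Gamma$. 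This places $p'$ in the saturation $\Gamma_0$ and contradicts our choice, so $\dim B^0 > \dim A$, and therefore $\dim B > \dim A$.

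The heart of the argument, and the step I expect to require the most care, is the bookkeeping establishing the principle ``bad $\iff$ $p' \in \Gamma_0$'': one must track the two sources of torsion, namely the kernel of $\pi_1|_{B^0}$ and the finite component group $B/B^0$, and it is precisely the finite generation of $\Hom_{\kbar}(A,A')$ that makes $\Gamma$ admissible as input to Lemma~\ref{lemma:Mordell-Lang}. A secondary technical point is whether $B$ itself is connected, i.e., an abelian subvariety in the strict sense, rather than an extension of a finite cyclic group by $B^0$; this does not affect the dimension conclusion, and any downstream inductive application of the lemma can always replace $B$ by $B^0$, which still satisfies $\dim B^0 > \dim A$ and contains the nondegenerate point $N(p,p')$.
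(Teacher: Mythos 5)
Your proposal is correct and follows essentially the same route as the paper: the same subgroup $\Gamma=\Hom_{\kbar}(A,A')(p)$, the same appeal to Lemma~\ref{lemma:Mordell-Lang} (resp.\ Theorem~\ref{thm:gamma_ellipticCurve} when $g'=1$) to find $p'\notin\Gamma_0$ over an extension of degree at most $6^{8g'}\cdot(8g')!$, and the same contradiction showing that $\dim B=\dim A$ would force $dp'\in\Gamma$ up to torsion. Your version is marginally more careful than the paper's --- explicitly clearing denominators via the quasi-inverse $\alpha$ rather than working in $\Hom^0(A,A')$, and flagging that $B$ need only be connected after passing to $B^0$ --- but these are refinements of the identical argument, not a different one.
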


\begin{proof}
The group of homomorphisms $\Hom(A,A')$ is finitely generated as a module over $\bZ$, so the image of $p$ under $\Hom(A,A')$ forms a finitely generated subgroup
$$
    \Gamma\colonequals\Hom(A,A')(p)\subset A'(\overline{k}).
$$
When $g'\geq 2$ (resp. $g'=1$), Lemma~\ref{lemma:Mordell-Lang} (resp. Theorem~\ref{thm:gamma_ellipticCurve}) implies that there exists an extension $L/k$ of degree bounded by $6^{8g'}\cdot (8g')!$ (resp. at most $18$) such that $A'(L)$ contains a point $p'$ not in the saturation of $\Gamma$. In any case, it is sufficient to take $6^{8g'}\cdot (8g')!$ as the bound. Let
$
    q\colonequals(p,p')\in (A\times A')(L)
$
and denote by $B\subset (A\times A')_{L}$ the abelian subvariety obtained as the Zariski closure of $\bZ q$.

Let us prove that $\dim(B) > \dim(A)$ by adapting part of the proof of \cite{HT00}*{Lemma~3.3}. Assume, to the contrary, that $\dim(B) = \dim(A)$. In this case, the left projection
$$\xymatrix{
    \pi\colon A\times A'\ar[r] & A
}$$
restricts to an isogeny $B\longrightarrow A$ over $L$, so we can regard $B$ as an element in the group of homomorphisms up to isogeny:
\[
    \beta
    \in\Hom^0(A,A')
    \colonequals\Hom(A,A')\otimes\bQ.
\]
In particular, there exists a nonzero integer $d$ such that $(d\beta)(p) = dp'$. This implies that $p'$ lives in the saturation of $\Gamma$, contradiction. This proves that $\dim(B)>\dim(A)$.
\end{proof}

\begin{thm}
\label{thm:nondeg_abelianVariety}
Let $A$ be an abelian variety of dimension $g$ over a field $k$ of characteristic zero. Then there exists a finite extension $L/k$ of degree bounded by the constant
$$
    6^{4g(g+1)}\prod_{r=1}^{g} (8r)!
$$
such that $A(L)$ contains a nondegenerate point.
\end{thm}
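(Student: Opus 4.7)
The plan is to construct a nondegenerate point of $A$ incrementally, by successively enlarging the abelian subvariety generated by a single rational point and invoking Lemma~\ref{lemma:dimensionGrowth} at each stage. The starting point is Proposition~\ref{prop:UPD_simpleAbelianVariety} applied to $A$ itself: it produces a non-torsion point $p_0\in A(L_0)$ with $[L_0:k]\leq 6^{8g}\cdot(8g)!$. Its Zariski closure $B_0\colonequals\overline{\bZ p_0}\subset A_{L_0}$ is an abelian subvariety of dimension at least one, and $p_0$ is a nondegenerate point of $B_0$ by construction.

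For the inductive step, suppose we have reached a proper abelian subvariety $B\subsetneq A_L$ defined over some finite extension $L/k$, together with a nondegenerate $L$-point $q\in B(L)$. Using a polarization of $A_L$ defined over $L$, Poincar\'e's complete reducibility theorem provides an abelian subvariety $C\subset A_L$, also defined over $L$, such that the addition morphism $\phi\colon B\times C\to A_L$ is an isogeny. Applying Lemma~\ref{lemma:dimensionGrowth} to the pair $(B,C)$ yields a further extension $L'/L$ of degree at most $6^{8\dim C}(8\dim C)!$ and a point $p'\in C(L')$ such that $\overline{\bZ(q,p')}\subset (B\times C)_{L'}$ is an abelian subvariety of dimension strictly greater than $\dim B$. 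Since $\phi$ is an isogeny, it is closed and a group homomorphism, so
\[
    \overline{\bZ(q+p')}=\phi\bigl(\overline{\bZ(q,p')}\bigr)\subset A_{L'}
\]
is an abelian subvariety of the same dimension as $\overline{\bZ(q,p')}$. Hence $q+p'=\phi(q,p')\in A(L')$ is a nondegenerate point of an abelian subvariety of $A_{L'}$ strictly larger than $B$, and iterating at most $g-1$ times brings the dimension up to $g$.

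The worst case for the bookkeeping is that the generated dimension increases by exactly one at each stage, so that the successive complements have dimensions $g-1,g-2,\dots,1$. Multiplying the degree bounds gives
\[
    [L:k]\leq 6^{8g}(8g)!\cdot\prod_{r=1}^{g-1}6^{8r}(8r)!
    = 6^{\,8g+4g(g-1)}\prod_{r=1}^{g}(8r)!
    = 6^{\,4g(g+1)}\prod_{r=1}^{g}(8r)!,
\]
which matches the bound in the theorem. The main technical point will be to secure Poincar\'e's complement $C$ over the ground field $L$ at each stage; this follows from the existence of an $L$-rational polarization on $A_L$, with $C$ taken as the identity component of the kernel of the composition $A_L\to A_L^{\vee}\twoheadrightarrow B^{\vee}$ induced by the polarization and the inclusion $B\injects A_L$. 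A smaller but essential point is to verify that the image of the orbit closure under the isogeny $\phi$ is genuinely the closure of the orbit of the image, which follows from $\phi$ being a group homomorphism together with properness of isogenies.
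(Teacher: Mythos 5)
Your proof is correct and follows essentially the same route as the paper's: both start from Proposition~\ref{prop:UPD_simpleAbelianVariety}, iterate Lemma~\ref{lemma:dimensionGrowth} to force the dimension of the abelian subvariety generated by the point to grow strictly, and multiply exactly the same degree bounds. The only (cosmetic) difference is that you realize the decomposition via a Poincar\'e complement inside $A_L$ and push the point forward through the addition isogeny at each step, whereas the paper carries the point in a product $A_i\times A_i'$ isogenous to $A$ and transfers it back to $A$ via the isogeny only at the end.
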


\begin{proof}
By Proposition~\ref{prop:UPD_simpleAbelianVariety}, there exists an extension $L_1/k$ of degree at most $6^{8g}\cdot(8g)!$ such that there exists a non-torsion point $p_1\in A(L_1)$. The Zariski closure of $\bZ p_1$ in $A_{L_1}$ is an abelian subvariety $A_1$ of dimension $g_1$ by Faltings' theorem. If $g_1=g$, then $p_1$ is a nondegenerate point. If $g_1<g$, then the quotient $A_1'\colonequals A_{L_1}/A_1$ is an abelian variety of dimension $g-g_1\geq 1$ and there is an isogeny $A_{L_1}\sim_{\,L_1}A_1\times A_1'$. A nondegenerate point can then be found using the inductive process below:

Let $i\geq 1$. Suppose that there exist abelian varieties $A_i$ and $A_i'$ over an extension $L_i/k$, where $A_i$ contains a nondegenerate point $p_i\in A_i(L_i)$ and $g_i\colonequals\dim(A_i)<g$, such that there is an isogeny
\begin{equation}
\label{eqn:isogenyAi}
    A_{L_i}
    \,\sim_{\,L_i}\,
    A_i\times A_i'.
\end{equation}
By Lemma~\ref{lemma:dimensionGrowth}, there exists an extension $L_{i+1}/L_i$ with
\[
    [L_{i+1}:L_i]\leq 6^{8(g-g_i)}\cdot(8(g-g_i))!
\]
and a point $p_{i+1}\in (A_i\times A_i')(L_{i+1})$ such that the Zariski closure of $\bZ p_{i+1}$ is an abelian subvariety
$
    A_{i+1}\subset (A_i\times A_i')_{L_{i+1}}
$
of dimension $g_{i+1}>g_i$. If $g_{i+1} = g$, then $p_{i+1}$ induces a nondegenerate point on $A_{L_{i+1}}$ via the isogeny \eqref{eqn:isogenyAi}.
If $g_{i+1}<g$, we define
$$
    A_{i+1}'\colonequals (A_i\times A_i')_{L_{i+1}}/A_{i+1}
$$
and repeat the same argument in this paragraph with $i$ replaced by $i+1$.

As $g$ is fixed, the above process ends up in an abelian variety $A_n$ defined over $L_n/L$, a nondegenerate point $p_n\in A_n$, and an isogeny $A_{L_n}\sim_{\,L_n} A_n$ such that $p_n$ induces a nondegenerate point on $A_{L_n}$. Let $L_0\colonequals k$ and $g_0\colonequals0$. Using the inequalities $g\geq g_{i+1} > g_i$ for $0\leq i\leq n-1$, we estimate the degree of extension by
\begin{gather*}
    [L_n:k]
    = \prod_{i=0}^{n-1}[L_{i+1}:L_{i}]
    \leq\prod_{i=0}^{n-1} 6^{8(g-g_i)}\cdot(8(g-g_i))!\\
    \leq\prod_{i=0}^g 6^{8(g-i)}\cdot(8(g-i))!
    = 6^{4g(g+1)}\prod_{r=0}^{g} (8r)!
    = 6^{4g(g+1)}\prod_{r=1}^{g} (8r)!
\end{gather*}
This finishes the proof.
\end{proof}

\begin{cor}
\label{cor:UPD_abelianVariety}
The collection of abelian varieties of a fixed dimension over fields of characteristic zero satisfies uniform potential density.
\end{cor}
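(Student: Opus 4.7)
The plan is to deduce this corollary as an immediate formal consequence of Theorem~\ref{thm:nondeg_abelianVariety}; no new geometric input is required beyond the observation that a nondegenerate point supplies a Zariski dense set of rational points.

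Fix a dimension $g\geq 1$ and let $\cC_g$ denote the collection of abelian varieties of dimension $g$ over fields of characteristic zero. For an arbitrary member $A/k\in\cC_g$, I would invoke Theorem~\ref{thm:nondeg_abelianVariety} to produce a finite extension $L/k$ of degree bounded by the explicit constant
$$
    d_g\colonequals 6^{4g(g+1)}\prod_{r=1}^{g}(8r)!
$$
such that $A(L)$ contains a nondegenerate point $p$. The crucial observation is that $d_g$ depends only on the dimension $g$, and is independent both of the specific abelian variety $A$ and of the ground field $k$. Consequently the single constant $d_g$ serves uniformly across the entire collection $\cC_g$.

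By the definition of nondegeneracy, the cyclic subgroup $\{np:n\in\bZ\}\subset A(L)$ is Zariski dense in $A_L$. Hence $A(L)$ itself is Zariski dense in $A_L$, which is exactly the condition required for uniform potential density with $d_{\cC_g}=d_g$.

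There is essentially no obstacle at this stage: the substantive content — the embedding of a Zarhin-type product into projective space of controlled degree, the finiteness coming from the Mordell--Lang conjecture, and the inductive argument that successively enlarges the abelian subvariety generated by the chosen point — has already been packaged into Theorem~\ref{thm:nondeg_abelianVariety}. The corollary is nothing more than a restatement of that theorem in the language of the uniform potential density definition.
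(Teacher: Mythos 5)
Your proposal is correct and follows exactly the paper's own argument: apply Theorem~\ref{thm:nondeg_abelianVariety} to get a nondegenerate point over an extension of degree bounded by a constant depending only on the dimension, and note that its orbit is Zariski dense by definition. Nothing is missing.
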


\begin{proof}
By definition, the orbit of a nondegenerate point is Zariski dense, so the statement follows immediately from Theorem~\ref{thm:nondeg_abelianVariety}.
\end{proof}

%----------Algebraic groups and their torsors
\subsection{Algebraic groups and their torsors}
\label{subsect:algebraicGroup_Torsor}

Unlike torsors of abelian varieties (see Proposition~\ref{prop:noUPD_genus-one-curve}), we will show that uniform potential density holds for torsors of linear algebraic groups of a fixed dimension over infinite perfect fields. Based on this and the uniform potential density for abelian varieties, we will prove that uniform potential density holds for algebraic groups of a fixed dimension over fields of characteristic zero. In this section, $k$ will denote an infinite perfect field.

%-----Preliminaries on semisimple groups
\subsubsection{Preliminaries on semisimple groups}
\label{subsubsect:semisimple}

Let us start by gathering necessary background materials and setting up some notations. Our main reference is \cite{Mil17}. For a connected linear algebraic group $G$ over a field $k$, its \emph{radical} $R(G)$ is the maximal connected solvable normal subgroup variety. The group $G$ is \emph{semisimple} if $R(G_{\kbar})$ is trivial, and is further called \emph{almost simple} if $G$ is noncommutative and every proper normal subgroup is finite. We say that $G$ is \emph{geometrically almost simple} if it is almost simple and remains so over $\kbar$.

\begin{thm}[\cite{Mil17}*{Theorem~21.51}]
\label{thm:almost-direct-product}
A semisimple algebraic group $G$ has only finitely many almost simple normal subgroup varieties $G_1,\dots,G_m$ and the morphism
\[\xymatrix@R=0pt{
    G_1\times\cdots\times G_m\ar[r] & G\\
    (g_1,\dots,g_m)\ar@{|->}[r] & g_1\cdots g_m
}\]
is surjective with finite kernel. Moreover, each connected normal algebraic subgroup $H\subset G$ is a product of those $G_i$'s that $H$ contains, and is centralized by those $G_i$'s not inside $H$.
\end{thm}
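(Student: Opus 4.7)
The plan is to reduce the statement to the classical structure theory of semisimple Lie algebras, after passing to $\kbar$ (the descent back to $k$ being routine, since the decomposition is canonical and hence Galois-stable). First I would set $\mathfrak g \colonequals \operatorname{Lie}(G)$; since $G$ is semisimple, $\mathfrak g$ is a semisimple Lie algebra, and Cartan's theorem yields a unique decomposition $\mathfrak g = \mathfrak g_1 \oplus \cdots \oplus \mathfrak g_m$ into finitely many simple ideals.

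The next step is to lift this to the group level. For each $i$ I would let $G_i$ be the connected closed normal subgroup of $G$ with Lie algebra $\mathfrak g_i$; a connected normal subgroup of $G_i$ has Lie algebra a proper ideal of $\mathfrak g_i$, hence trivial by simplicity, so every proper normal subgroup of $G_i$ is finite, i.e.\ $G_i$ is almost simple. Conversely any almost simple normal subgroup $N \subset G$ has Lie algebra a nonzero minimal ideal of $\mathfrak g$, forcing $N = G_i$ for some $i$. Hence $\{G_1, \dots, G_m\}$ is the complete, finite list of almost simple normal subgroup varieties.

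I would then analyze the multiplication map $\mu \colon G_1 \times \cdots \times G_m \to G$. The crucial input is that $[\mathfrak g_i, \mathfrak g_j] = 0$ for $i \neq j$, which forces $G_i$ and $G_j$ to centralize one another: the commutator map lands inside $G_i \cap G_j$, a connected normal subgroup whose Lie algebra is $0$ and which is therefore finite and central, and one rules out nontrivial commutators using that $G_i \times G_j$ is connected. Thus $\mu$ is a group homomorphism whose derivative at the identity is the canonical isomorphism $\bigoplus_i \mathfrak g_i \isom \mathfrak g$, so $\mu$ is étale at the identity and hence surjective with finite kernel.

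For the moreover part, given a connected normal subgroup $H \subset G$ with Lie algebra $\mathfrak h$, the semisimplicity of $\mathfrak g$ forces $\mathfrak h = \bigoplus_{i \in I} \mathfrak g_i$ for some subset $I \subset \{1,\dots,m\}$; then for each $i \in I$ the intersection $H \cap G_i$ is a connected normal subgroup of $G_i$ whose Lie algebra is all of $\mathfrak g_i$, so almost-simplicity forces $H \cap G_i = G_i$ and therefore $G_i \subset H$. The product $\prod_{i \in I} G_i$ is then contained in $H$, is connected, and has the same Lie algebra as $H$, so it equals $H$; the $G_j$ with $j \notin I$ centralize $H$ by the same bracket argument as above. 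The main obstacle, especially in positive characteristic, is the passage from Lie algebra ideals to connected normal subgroups: the general correspondence between connected subgroups and Lie subalgebras breaks down, so one must exploit normality together with the adjoint representation (and avoid relying on a naive exponential correspondence) to match up the Lie-theoretic and group-theoretic decompositions.
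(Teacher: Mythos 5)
The paper gives no proof of this statement; it is quoted verbatim from Milne's book, whose argument is purely group-theoretic (one takes the minimal nontrivial connected normal subgroup varieties, shows any two distinct ones commute because their commutator lands in their intersection --- a finite normal subgroup of a connected group, hence central, hence trivial by a connectedness argument --- and then bounds their number and proves surjectivity by dimension and radical considerations). Your route through $\mathfrak{g}=\operatorname{Lie}(G)$ is genuinely different, and in characteristic zero it is essentially correct: ideals of a semisimple Lie algebra are algebraic, normal connected subgroups correspond to ideals, and your commutator argument for $[\mathfrak{g}_i,\mathfrak{g}_j]=0\Rightarrow (G_i,G_j)=e$ is exactly the right one. (Two small points you should patch even in characteristic zero: $H\cap G_i$ need not be connected, so work with its identity component; and the existence of a connected normal subgroup with Lie algebra a prescribed ideal is a nontrivial algebraicity statement that deserves a reference.)

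The genuine gap is that the theorem is needed, and is true, over fields of arbitrary characteristic, and your very first step fails there: for a semisimple group $G$ in characteristic $p$ the Lie algebra need not be semisimple at all ($\mathfrak{sl}_p$ has a one-dimensional center consisting of scalar matrices, $\operatorname{Lie}(\mathrm{PGL}_p)$ is likewise not semisimple), so there is no Cartan decomposition into simple ideals to lift, and the number of simple constituents of $\mathfrak{g}$ need not match the number of almost simple factors of $G$. This is not a cosmetic issue for the paper: Theorem~\ref{thm:almost-direct-product} feeds into Lemma~\ref{lemma:geometricallyAlmostSimpleFactors} and hence into the first bullet of Theorem~\ref{thm:main_algebraicGroup}, which is stated over infinite \emph{perfect} fields, including those of positive characteristic. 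Your closing caveat names the subgroup--subalgebra correspondence as the obstacle, but the problem starts one step earlier, and acknowledging it does not repair the argument. The fix is to abandon the Lie algebra and run the decomposition at the level of the group itself, as Milne does: the key mechanism --- a morphism from a connected variety into a finite group scheme through the identity is constant --- is already present in your write-up and is all one needs, applied to commutators of minimal connected normal subgroups rather than to brackets of ideals.
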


A semisimple group $G$ over $k$ is \emph{split} if there exists a maximal torus $T\subset G$ isomorphic to a product of copies of $\G_m$ over $k$. By \cite{Tit92}*{Proposition~A1}, a geometrically almost simple group $G$ becomes split over an extension $L/k$ of degree dividing an integer $\delta(G)$ depending only on the type of $G$. Let $v(n)$ denote the $2$-adic valuation of an integer $n$ and assume that $G$ is not of type $E_8$. Then $\delta(G)$ is given explicitly as
$$
\begin{array}{c|ccccccccc}
    & A_n & B_n & C_n & D_{4} & D_{n\geq5} & G_2 & F_4 & E_6 & E_7\\
    \hline
    \delta(G) & 2(n+1) & 2^n & 2^{v(n)+1} & 3\cdot2^6 & 2^{n+v(n)} & 2 & 6 & 2^2\cdot 3^4 & 2^5\cdot 3
\end{array}
$$
For $G$ of type $E_8$, Totaro \cite{Tot04}*{Theorem~0.1} improved the result by Tits to
$$
    \delta(G) = 2^6\cdot 3^2\cdot 5.
$$
In particular, there exists an increasing function on positive integers
\begin{equation}
\label{eqn:increasing_delta}
    \overline{\delta}\colon\bN\longrightarrow\bN
    \quad\text{such that}\quad
    \delta(G)\leq\overline{\delta}(\dim(G))
\end{equation}
for every geometrically almost simple group $G$.

An \emph{inner $k$-form} of $G$ is an algebraic group $G'$ over $k$ together with an isomorphism $f'\colon G_{\kbar}\to G'_{\kbar}$ such that, for every $\sigma\in\Gal(\kbar/k)$, the automorphism
$$
    a_\sigma\colonequals f'^{-1}\circ(\sigma f')
    \colon G\longrightarrow G,
    \quad\text{where}\quad
    \sigma f'\colonequals\sigma\circ f'\circ\sigma^{-1}
$$
is inner. Two inner $k$-forms $(G',f')$ and $(G'',f'')$ are isomorphic if there exists an isomorphism $\varphi\colon G'\to G''$ over $k$ such that the following diagram commutes: 
$$\xymatrix{
    G_{\kbar}\ar[d]_-{f'}\ar[dr]^-{f''} & \\
    G'_{\kbar}\ar[r]_-{\varphi_{\kbar}} & G''_{\kbar}.
}$$
Let $G^{\rm ad}\colonequals G/Z(G)$ where $Z(G)$ is the center of $G$. When $G$ is semisimple and split, the isomorphism classes of inner $k$-forms of $G$ are classified by $H^1(k,G^\mathrm{ad})$ \cite{Mil17}*{\S24.7}. Moreover, the split ones belong to the distinguished class $[G^\mathrm{ad}]$ \cite{Mil17}*{Corollary~23.54}. Note that, if $G$ is geometrically almost simple, then its inner forms are geometrically almost simple as well.

Let $G$ be semisimple and split as before. According to \cite{CGP15}*{Corollary~A.4.11}, there exists a universal covering
$$
    \beta\colon\widetilde{G}\longrightarrow G
$$
where $\widetilde{G}$ is a simply connected semisimple group and $\beta$ is a \emph{central isogeny}, that is, a finite surjective morphism with kernel contained in $Z(\widetilde{G})$. This induces a commutative diagram with exact rows
$$\xymatrix{
    1\ar[r] & Z(\widetilde{G})\ar[d]^-{\alpha}\ar[r] & \widetilde{G}\ar[d]^-{\beta}\ar[r] & \widetilde{G}^\mathrm{ad}\ar[d]^-{\gamma}\ar[r] & 1\\
    1\ar[r] & Z(G)\ar[r] & G\ar[r] & G^\mathrm{ad}\ar[r] & 1.
}$$
The map $\gamma$ is an isomorphism by \cite{BT72}*{Proposition~2.26}. This fact, together with the surjectivity of $\beta$, implies that $\alpha$ is surjective via the snake lemma. Furthermore, we have $\ker(\alpha)\cong\ker(\beta)\cong\pi_1(G)$ since $\beta$ is central. Therefore,
\begin{equation}
\label{eqn:center_quotient}
    Z(G)\cong Z(\widetilde{G})/\pi_1(G).
\end{equation}

Now assume that $G$ is almost simple and split. Then $\widetilde{G}$ is almost simple, and $Z(\widetilde{G})$ is determined by its type as shown in the table \cite{Mil17}*{\S24-c}:
\begin{equation}
\label{eqn:center_universal}
\renewcommand{\arraystretch}{1.3}
\begin{array}{c|cccccccc}
    & A_n & B_n & C_n & D_{2m} & D_{2m+1} & E_6 & E_7 & E_8, F_4, G_2\\
    \hline
    Z(\widetilde{G}) & \mu_{n+1} & \mu_2 & \mu_2 & \mu_2\times\mu_2 & \mu_4 & \mu_3 & \mu_2 & e
\end{array}
\end{equation}
where $\mu_n$ is the $n$-th root of unity and $e$ is the identity element. As a consequence, $Z(G)$ is the quotient of one of the above groups, so its size is bounded by a constant depending only on the type of $G$. We conclude from \eqref{eqn:center_quotient} and \eqref{eqn:center_universal} that, for every split almost simple group $G$, there is an inequality
$$
    |Z(G)|\leq\left\{
    \begin{array}{cl}
        n+1 & \text{if }G\text{ is of type }A_n \\
        4 & \text{otherwise}.
    \end{array}
    \right.
$$
In particular, there exists an increasing function on positive integers
\begin{equation}
\label{eqn:increasing_zeta}
    \overline{\zeta}\colon\bN\longrightarrow\bN
    \quad\text{such that}\quad
    |Z(G)|\leq\overline{\zeta}(\dim(G))
\end{equation}
for every split almost simple group $G$.

%-----Torsors under connected linear algebraic groups
\subsubsection{Torsors under connected linear algebraic groups}
\label{subsubsect:torsor_linearAlgebraicGroup}

We are ready to estimate the degrees of field extensions that make a torsor under a connected linear algebraic group trivial. Let us proceed in a step-by-step manner starting from the case of geometrically almost simple groups. We will need the functions $\overline{\delta}$ and $\overline{\zeta}$ defined in \eqref{eqn:increasing_delta} and \eqref{eqn:increasing_zeta}, respectively.

\begin{lemma}
\label{lemma:splitTorsor_almostsimple}
Let $X$ be a $G$-torsor of dimension $r$ where $G$ is a geometrically almost simple algebraic group over a perfect field $k$. Then there exists an extension $L/k$ of degree bounded by $\overline{\zeta}(r)\overline{\delta}(r)^2$ such that $X_L\cong G_L$.
\end{lemma}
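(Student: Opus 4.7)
The plan is to follow the class $[X] \in H^1(k, G)$ through the Galois cohomology sequence attached to the central extension
\[
    1 \to Z(G) \to G \to G^{\mathrm{ad}} \to 1,
\]
applying Tits' bound $\overline{\delta}$ twice (once to split $G$, once to split the associated inner form of $G^{\mathrm{ad}}$) and then using the bound $\overline{\zeta}(r)$ on $|Z(G)|$ to kill the residual class in $H^1(Z(G))$. Unwinding this should produce exactly the bound $\overline{\zeta}(r)\overline{\delta}(r)^2$ in the statement.

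Concretely, I will first invoke Tits' theorem to obtain an extension $k_1/k$ with $[k_1:k]$ dividing $\delta(G) \leq \overline{\delta}(r)$ such that $G_{k_1}$ is split. Over $k_1$, the exact sequence of pointed sets
\[
    H^1(k_1, Z(G_{k_1})) \longrightarrow H^1(k_1, G_{k_1}) \xrightarrow{\ \pi_*\ } H^1(k_1, G^{\mathrm{ad}}_{k_1})
\]
sends $[X_{k_1}]$ to a class $\bar\alpha$ classifying an inner form $H$ of $G^{\mathrm{ad}}_{k_1}$. Since $H$ is geometrically almost simple of the same type as $G$ and has dimension $r$, Tits again yields $k_2/k_1$ of degree dividing $\overline{\delta}(r)$ over which $H$ splits. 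Both $H_{k_2}$ and $G^{\mathrm{ad}}_{k_2}$ are then split semisimple of the same type, hence isomorphic by the uniqueness of Chevalley forms, so $\bar\alpha|_{k_2} = 0$. By exactness, $[X_{k_2}]$ lifts to some $\beta \in H^1(k_2, Z(G_{k_2}))$. Because the split almost simple group $G_{k_2}$ of dimension $r$ has $|Z(G_{k_2})| \leq \overline{\zeta}(r)$, the $Z(G_{k_2})$-torsor attached to $\beta$ is a finite $k_2$-scheme of rank $\leq \overline{\zeta}(r)$ and therefore admits a closed point whose residue field $L$ has $[L:k_2] \leq \overline{\zeta}(r)$. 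Over $L$ we get $\beta|_L = 0$, hence $[X_L] = 0$ and $X_L \cong G_L$, and combining the three steps gives $[L:k] \leq \overline{\zeta}(r)\overline{\delta}(r)^2$.

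The most delicate step will be the passage from splitting $H$ as a group to the vanishing of $\bar\alpha$ in Galois cohomology: it depends on knowing that a split semisimple group is determined up to $k_2$-isomorphism by its root datum, so that the split $H_{k_2}$ is forced to be isomorphic to the split $G^{\mathrm{ad}}_{k_2}$. A secondary technicality is that in positive characteristic $Z(G)$ may fail to be smooth, but since $k_2$ is perfect the infinitesimal part of $Z(G_{k_2})$ has trivial fppf $H^1$, so the residue-field counting above sees only the étale part of $Z(G_{k_2})$ and the bound is unaffected.
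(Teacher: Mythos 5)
Your proof is correct and follows the same architecture as the paper's: push $[X]$ through the sequence $H^1(Z(G))\to H^1(G)\to H^1(G^{\mathrm{ad}})$, apply Tits' bound $\overline{\delta}(r)$ twice (once to split $G$, once to split the inner form classified by the image of $[X]$), and use $|Z(G)|\leq\overline{\zeta}(r)$ to kill the residual central class. The one place you genuinely diverge is the final step: the paper passes to an extension of degree at most $|Z(G_{\kbar})|$ over which Galois acts trivially on the center and asserts that $H^1(L'',Z(G_{L''}))$ then vanishes, whereas you trivialize $\beta$ by taking a closed point of the associated $Z(G)$-torsor, a finite scheme of degree $|Z(G)|\leq\overline{\zeta}(r)$. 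Your version is the more robust one, since trivial Galois action on a finite group of multiplicative type does not by itself force $H^1$ to vanish (e.g.\ $H^1(L,\mu_n)\cong L^\times/(L^\times)^n$ even when $\mu_n\subset L$), and your remark on the infinitesimal part of the center over a perfect field addresses a point the paper leaves implicit. One small caveat: the vanishing $\bar\alpha|_{k_2}=0$ should be justified not merely by an abstract isomorphism $H_{k_2}\cong G^{\mathrm{ad}}_{k_2}$ of algebraic groups, but by the fact that a split inner form represents the distinguished class of $H^1(k_2,G^{\mathrm{ad}})$ (two distinct classes can yield isomorphic groups); this is exactly what the paper cites \cite{Mil17}*{Corollary~23.54} for.
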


\begin{proof}
The exact sequence $1\longrightarrow Z(G)\longrightarrow G\longrightarrow G^\mathrm{ad}\longrightarrow 1$, when considered over any extension $K/k$, induces an exact sequence of pointed sets
$$\xymatrix{
    H^1(K,Z(G_K))\ar[r] &
    H^1(K,G_K)\ar[r] &
    H^1(K,G_K^\mathrm{ad}).
}$$
Let $x\in H^1(k,G)$ be the class of $X$ and $y\in H^1(k,G^\mathrm{ad})$ be its image under the second map above. By passing to an extension $L/k$ of degree bounded by $\overline{\delta}(r)$, we can assume that $G_L$ is split so that $y$ corresponds to the class of an inner $L$-form $G'$ of $G_L$. As $G'$ is also geometrically almost simple of dimension $r$, there exists an extension $L'/L$ of degree bounded by $\overline{\delta}(r)$ such that $G'_{L'}$ is split. This implies that
$$
    y = 0\in H^1(L',G_{L'}^\mathrm{ad})
    \quad\text{and thus}\quad
    x\in H^1(L',Z(G_{L'})).
$$
Let us take an extension $L''/L'$ with $[L'':L']\leq|Z(G_{\kbar})|\leq\overline{\zeta}(r)$ such that $\Gal(\overline{L''}/L'')$ acts trivially on $Z(G_{\kbar})$ (thus trivially on $Z(G_{L''})$). It follows that
$
    H^1(L'',Z(G_{L''})) = 0.
$
In particular, $x = 0$ over $L''$ and we get a desired isomorphism $X_{L''}\cong G_{L''}$. Note that
$$
    [L'':k] = [L'':L'][L':L][L:k]
    \leq\overline{\zeta}(r)\overline{\delta}(r)^2
$$
so the proof is done.
\end{proof}

\begin{lemma}
\label{lemma:geometricallyAlmostSimpleFactors}
Let $G$ be a semisimple algebraic group over a perfect field $k$ of dimension $r>0$. Then there exists an extension $L/k$ of degree bounded by $r$ such that $G_L$ contains a normal subgroup $N$ which is geometrically almost simple with $\dim(N)>0$.
\end{lemma}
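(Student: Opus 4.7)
The plan is to pass to $\kbar$, invoke Theorem~\ref{thm:almost-direct-product} to decompose $G_{\kbar}$ into geometrically almost simple factors, and then take $L$ to be the fixed field of the stabilizer of a single factor under the Galois action.

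More precisely, applying Theorem~\ref{thm:almost-direct-product} to the semisimple group $G_{\kbar}$, we obtain geometrically almost simple normal subgroup varieties $G_{\kbar,1},\dots,G_{\kbar,m}\subset G_{\kbar}$ whose product surjects onto $G_{\kbar}$ with finite kernel. The Galois group $\Gal(\kbar/k)$ acts on $G_{\kbar}$, and since it permutes the set of geometrically almost simple normal subgroup varieties, it permutes the $G_{\kbar,i}$. Pick any orbit $O$ of this permutation action; after reordering, we may assume $G_{\kbar,1}\in O$. Let $H\colonequals\{\sigma\in\Gal(\kbar/k):\sigma(G_{\kbar,1})=G_{\kbar,1}\}$ be its stabilizer and $L\colonequals\kbar^H$, so that $[L:k]=|O|$.

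By construction, $G_{\kbar,1}$ is preserved by $\Gal(\kbar/L)=H$, so Galois descent yields a closed subscheme $N\subset G_L$ with $N_{\kbar}=G_{\kbar,1}$. Because the multiplication, inversion, identity, and the normality inclusion on $G_{\kbar,1}$ are all $H$-equivariant, they descend as well, making $N$ a normal algebraic subgroup of $G_L$. Since $N_{\kbar}=G_{\kbar,1}$ is almost simple, $N$ is geometrically almost simple, and $\dim(N)=\dim(G_{\kbar,1})>0$ because an almost simple group is noncommutative (so in particular nontrivial).

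It remains to bound $[L:k]=|O|$ by $r$. Since the $G_{\kbar,i}$ are distinct almost simple normal subgroups of $G_{\kbar}$, their dimensions are positive, and Theorem~\ref{thm:almost-direct-product} together with the finiteness of the kernel of the product map gives $\sum_{i=1}^m\dim(G_{\kbar,i})=\dim(G_{\kbar})=r$. In particular $m\leq r$, and since $|O|\leq m$ we conclude $[L:k]\leq r$. The only subtle point is verifying that the Galois descent really produces a normal subgroup scheme over $L$, but this is routine once one notes that both the subgroup and the normality condition are intrinsic and $H$-stable.
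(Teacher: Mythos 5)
Your proof is correct and follows essentially the same route as the paper: decompose $G_{\kbar}$ into geometrically almost simple factors via Theorem~\ref{thm:almost-direct-product}, observe that the Galois orbit of one factor has size at most $m\leq r$, and descend that factor to the fixed field of its stabilizer. You supply slightly more detail than the paper on two points it leaves implicit (the descent of the subgroup and normality, and the justification of $m\leq r$ via $\sum_i\dim G_{\kbar,i}=r$ with each summand positive), both of which are fine.
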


\begin{proof}
The algebraic group $G_{\kbar}$ is still semisimple, so it contains almost simple normal subgroups $G_1,\dots,G_m$ as described in Theorem~\ref{thm:almost-direct-product}, and they are geometrically almost simple since we are over $\kbar$. Because $r = \dim(G) > 0$, there exists $G_i$ where $1\leq i\leq m$ such that $\dim(G_i)>0$. Under the action of $\Gal(\kbar/k)$, the orbit of $G_i$ is a subset of $\{G_1,\dots,G_m\}$ and thus has cardinality at most $m\leq r$. It follows that there exists an extension $L/k$ of degree bounded by $r$ such that $N\colonequals G_i$ is defined over $L$.
\end{proof}

\begin{lemma}
\label{lemma:splitTorsor_semisimple}
Let $X$ be a $G$-torsor of dimension $r$ where $G$ is a semisimple algebraic group over a perfect field $k$. Then there exists an extension $L/k$ of degree bounded by a constant $d_r$ which depends only on and increases with $r$ such that $X_L\cong G_L$.
\end{lemma}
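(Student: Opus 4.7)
The plan is to proceed by induction on the dimension $r = \dim(G)$, building on the two preceding lemmas. The base case $r = 0$ is vacuous, since a zero-dimensional semisimple group is trivial, so we may set $d_0 \colonequals 1$. For the inductive step, I would first invoke Lemma \ref{lemma:geometricallyAlmostSimpleFactors} to pass to an extension $L_1/k$ of degree at most $r$ over which $G_{L_1}$ contains a geometrically almost simple normal subgroup $N$ with $\dim(N) > 0$. The key observation is that the quotient $Q \colonequals G_{L_1}/N$ is again semisimple, of strictly smaller dimension $r - \dim(N) < r$: this follows from Theorem \ref{thm:almost-direct-product}, since over the algebraic closure $G_{L_1}$ is an almost direct product of its geometrically almost simple normal subgroup varieties, $N$ is one of them, and $Q$ is therefore isogenous to the product of the remaining factors.

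I would then exploit the short exact sequence $1 \to N \to G_{L_1} \to Q \to 1$ together with the exactness of non-abelian cohomology to split the problem in two. Let $Y$ denote the $Q$-torsor corresponding to the image of the class $[X_{L_1}]$ in $H^1(L_1, Q)$. By the inductive hypothesis, there exists an extension $L_2/L_1$ with $[L_2:L_1] \leq d_{r-1}$ that trivializes $Y$, and exactness at $H^1(L_2, G_{L_2})$ then forces $X_{L_2}$ to arise as the pushout of some $N_{L_2}$-torsor $X'$. Since $\dim(N) \leq r$ and $N_{L_2}$ is geometrically almost simple, applying Lemma \ref{lemma:splitTorsor_almostsimple} to $X'$ produces a further extension $L_3/L_2$ of degree at most $\overline{\zeta}(r)\overline{\delta}(r)^2$ trivializing $X'$, and hence $X_{L_3}$. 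Accumulating the bounds yields a recursion of the shape $d_r \leq r \cdot d_{r-1} \cdot \overline{\zeta}(r)\overline{\delta}(r)^2$, which is manifestly increasing in $r$.

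The step I expect to be most delicate is the bookkeeping for non-abelian cohomology: the cohomology sequence above is only a sequence of pointed sets, so the exactness statement I rely on---that the kernel of $H^1(L_2, G_{L_2}) \to H^1(L_2, Q)$ coincides with the image of $H^1(L_2, N_{L_2})$---must be cited and applied carefully, without assuming that $N$ is central in $G_{L_1}$. A related sanity check is that $Q$ genuinely exists as a group scheme over $L_1$ and is semisimple over $L_1$ rather than merely geometrically semisimple; both facts follow from $N$ being a normal subgroup variety of $G_{L_1}$, but deserve explicit verification before invoking the inductive hypothesis.
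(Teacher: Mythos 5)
Your proposal is correct and follows essentially the same route as the paper: induction on $r$, Lemma~\ref{lemma:geometricallyAlmostSimpleFactors} to produce the geometrically almost simple normal subgroup $N$ over a degree-$\leq r$ extension, the exact sequence of pointed sets $H^1(L,N)\to H^1(L,G_L)\to H^1(L,G_L/N)$ to reduce to the quotient (semisimple by \cite{Mil17}*{Corollary~21.52}) and then to an $N$-torsor handled by Lemma~\ref{lemma:splitTorsor_almostsimple}, yielding the same recursion $d_r\leq r\cdot d_{r-1}\cdot\overline{\zeta}(r)\overline{\delta}(r)^2$. The delicate points you flag (exactness for non-abelian $H^1$ without centrality, and semisimplicity of the quotient over the ground field) are exactly the ones the paper addresses.
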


\begin{proof}
Let us prove the statement by induction on $r$. Note that the initial case $r=0$ is trivial. Assume that $r\geq 1$. By Lemma~\ref{lemma:geometricallyAlmostSimpleFactors}, after passing to an extension $L/k$ of degree bounded by $r$, there exists a normal subgroup $N\subset G_L$ that is geometrically almost simple with $\dim(N)>0$. This induces an exact sequence of pointed sets
$$\xymatrix{
    H^1(L,N)\ar[r]
    & H^1(L,G_L)\ar[r]
    & H^1(L,G_L/N).
}$$

The torsor $X$ represents a class $[X]\in H^1(L,G_L)$. Let $[Y]\in H^1(L,G_L/N)$ be the image of $[X]$, where $Y$ is a $G_L/N$-torsor. Note that
\begin{itemize}
    \item $G_L/N$ is semisimple by \cite{Mil17}*{Corollary~21.52}, and 
    \item  $\dim(G_L/N)<r$ since $\dim(N)>0$.
\end{itemize}
By applying the induction hypothesis on the $G_L/N$-torsor $Y$, we obtain an extension $L'/L$ of degree bounded by $d_{r-1}$ such that $[Y_{L'}] = 0\in H^1(L',G_{L'}/N_{L'})$. This implies that $[X_{L'}]$ is the image of $[Z]\in H^1(L',N_{L'})$ for some $N_{L'}$-torsor $Z$.

Let $s\colonequals\dim(N)$. Since $N_{L'}$ is geometrically almost simple, Lemma~\ref{lemma:splitTorsor_almostsimple} implies that there exists an extension $L''/L'$ of degree bounded by $\overline{\zeta}(s)\overline{\delta}(s)^2$ such that
$$
    [Z_{L''}] = 0
    \in H^1(L'', N_{L''})
    \quad\text{and thus}\quad
    [X_{L''}] = 0\in H^1(L'', G_{L''}).
$$
This gives a desired isomorphism $X_{L''}\cong G_{L''}$. Note that
$$
    [L'':k]
    = [L'':L'][L':L][L:k]
    \leq\overline{\zeta}(s)\overline{\delta}(s)^2\cdot d_{r-1}\cdot r
    \leq\overline{\zeta}(r)\overline{\delta}(r)^2\cdot d_{r-1}\cdot r
$$
where the last inequality uses $s\leq r$ the fact that $\overline{\delta}$ and $\overline{\zeta}$ are increasing functions. As the last term above depends only on and increases with $r$, the proof is done.
\end{proof}

Before entering the general case about torsors under connected linear algebraic groups, we need to discuss the splitting fields of tori. An algebraic group $T$ over a field $k$ is a \emph{torus} if $T_{\kbar}\cong\G_m^r$. The integer $r$ is called the \emph{rank} of $T$. By definition, $T$ is split if and only if $T\cong\G_m^r$ over $k$.

\begin{lemma}
\label{lemma:splitTorus}
Let $T$ be a torus of rank $r$ over a perfect field $k$. Then there exists an extension $L/k$ of degree bounded by the constant ${\bf c}_r$ defined in Definition~\ref{defn:c_n} such that $T_L$ is split. In particular, we have $H^1(L,T_L)=\{0\}$.
\end{lemma}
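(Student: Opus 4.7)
The plan is to use the standard dictionary between tori and their character lattices, reduce the splitting problem to an action on a free abelian group of rank $r$, and then invoke Minkowski's theorem (Theorem~\ref{thm:minkowski}) to bound the order of the resulting finite group.

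More precisely, since $k$ is perfect we have $\kbar=k^s$, and the torus $T$ is classified by its character lattice $X^*(T)\colonequals\Hom(T_{\kbar},\G_m)\cong\bZ^r$ together with the continuous Galois action
\[
    \rho\colon\Gal(\kbar/k)\longrightarrow\Aut(X^*(T))\cong\GL(r,\bZ).
\]
By the general theory of tori over a perfect field, $T_L$ is split if and only if $\Gal(\kbar/L)$ acts trivially on $X^*(T)$, i.e., $\Gal(\kbar/L)\subset\ker\rho$. The first step is therefore to let $L\subset\kbar$ be the fixed field of $\ker\rho$; then $L/k$ is a finite Galois extension with $\Gal(L/k)\isom\im(\rho)$ a finite subgroup of $\GL(r,\bZ)$. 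By Definition~\ref{defn:c_n},
\[
    [L:k]=|\Gal(L/k)|=|\im(\rho)|\leq\mathbf{c}_r,
\]
so $T_L$ is split over an extension of the required degree bound.

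For the vanishing of $H^1(L,T_L)$, once $T_L\cong\G_m^r$ is split, the computation is immediate: $H^1(L,T_L)\cong H^1(L,\G_m)^r$, and each factor vanishes by Hilbert's Theorem 90. I expect no obstacle here; the only point that requires care is the first step, where one has to check that the image of the continuous Galois representation into the \emph{discrete} group $\GL(r,\bZ)$ is indeed finite (equivalently, that the kernel is open), which follows from the fact that the character lattice of $T$ is always split by some finite separable extension—a consequence of $T$ being of finite type over $k$.
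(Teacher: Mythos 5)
Your argument is correct and is essentially identical to the paper's proof: both pass to the Galois action on the character lattice $X^*(T)\cong\bZ^r$, take $L$ to be the fixed field of the kernel of the resulting map to $\GL(r,\bZ)$, bound $[L:k]$ by $\mathbf{c}_r$ via Minkowski, and conclude $H^1(L,T_L)=0$ from Hilbert's Theorem 90 for the split torus. Your closing remark about the finiteness of the image (openness of the kernel) is the same point the paper handles implicitly by noting the kernel corresponds to a finite Galois extension.
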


\begin{proof}
The action of $\Gal(\kbar/k)$ on the group of characters
$$
    X^*(T)\colonequals
    \Hom(T_{\kbar},\G_m)
    \cong\bZ^r
$$
induces a homomorphism $\Gal(\kbar/k)\longrightarrow\GL(r,\bZ)$, whose kernel is isomorphic to $\Gal(\kbar/L)$ for some finite Galois extension $L/k$. As $\Gal(\kbar/L)$ acts on $X^*(T)$ trivially, this implies that $T_L$ is split, that is, $T_L\cong\G_m^r$, due to \cite{Mil17}*{Theorem~12.23}. Since $\Gal(L/k)$ appears as a finite subgroup of $\GL(r,\bZ)$, we have $[L:k] = |\Gal(L/k)|\leq\textbf{c}_r$. The last statement follows from the fact that $H^1(L,\G_m^r) = \{0\}$ (see, e.g., \cite{Mil17}*{Corollary~3.47}).
\end{proof}

\begin{prop}
\label{prop:splitTorsor_linearAlgGp}
Let $X$ be a $G$-torsor of dimension $r$ where $G$ is a connected linear algebraic group over a perfect field $k$. Then there exists an extension $L/k$ of degree bounded by the multiplication $d_r\cdot{\bf c}_r$ such that $X_L\cong G_L$. Here $d_r$ and ${\bf c}_r$ are increasing functions in $r$ given in Lemma~\ref{lemma:splitTorsor_semisimple} and  Definition~\ref{defn:c_n}, respectively.
\end{prop}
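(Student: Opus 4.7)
The plan is to reduce to the cases already treated by peeling off first the unipotent radical and then the derived subgroup of the reductive quotient. Since $k$ is perfect, the unipotent radical $R_u(G)$ is defined over $k$; let $H\colonequals G/R_u(G)$ denote the reductive quotient (of dimension $\leq r$), let $H^{\rm der}$ be its derived subgroup (a semisimple group of dimension $\leq r$), and let $T\colonequals H/H^{\rm der}$ be the abelianization (a torus of rank $\leq r$). I will trivialize the class $[X]\in H^1(k,G)$ by pushing it through
$$
    H^1(k,G)\longrightarrow H^1(k,H)\longrightarrow H^1(k,T)
$$
and then invoking, in order, Lemma~\ref{lemma:splitTorus} for the torus image and Lemma~\ref{lemma:splitTorsor_semisimple} for the remaining semisimple piece. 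The vanishing of $H^1$ for connected unipotent groups over perfect fields will make the initial reduction cost-free.

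First I would treat the unipotent step. Over a perfect field, every smooth connected unipotent group is split, hence admits a composition series with successive quotients isomorphic to $\G_a$; since $H^1(K,\G_a)=0$ for any field $K$ and perfection is preserved under finite extensions, d\'evissage yields $H^1(K,R_u(G)_K)=0$ for every finite extension $K/k$. Forming the induced $H$-torsor $X^H\colonequals X\times^G H$, the exact sequence of pointed sets attached to $1\to R_u(G)\to G\to H\to 1$ shows that whenever $[X^H_L]=0$, the class $[X_L]$ lies in the image of $H^1(L,R_u(G)_L)=\{0\}$ and is therefore trivial. This reduces the problem to trivializing $X^H$ over a bounded extension.

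For the reductive torsor $X^H$, I would push its class further along $H\to T$ to obtain a $T$-torsor $X^T$ of rank at most $r$. Lemma~\ref{lemma:splitTorus} provides an extension $L_1/k$ of degree at most ${\bf c}_r$ with $[X^T_{L_1}]=0$. Exactness of the sequence of pointed sets associated with $1\to H^{\rm der}\to H\to T\to 1$ then forces $[X^H_{L_1}]$ to be the image of some class $[Z]\in H^1(L_1,H^{\rm der}_{L_1})$. Applying Lemma~\ref{lemma:splitTorsor_semisimple} to the semisimple group $H^{\rm der}_{L_1}$ of dimension $\leq r$ yields an extension $L_2/L_1$ of degree at most $d_r$ over which $[Z_{L_2}]=0$; since a map of pointed sets sends the basepoint to the basepoint, this forces $[X^H_{L_2}]=0$, and combining with the unipotent step gives $X_{L_2}\cong G_{L_2}$ with $[L_2:k]\leq d_r\cdot{\bf c}_r$.

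The only point that I expect to require genuine care, and which will be the main obstacle, is the non-abelian cohomology bookkeeping: one must check that the groups $R_u(G)$, $H^{\rm der}$ and $T$ really do descend to $k$ (which uses perfection), and that the exact sequence of pointed sets truly allows one to deduce triviality of a base class from triviality of a lift at each stage. Once this is in place, the argument is the clean assembly of three ingredients (vanishing of $H^1$ for connected unipotent groups, Lemma~\ref{lemma:splitTorus}, and Lemma~\ref{lemma:splitTorsor_semisimple}), and the bound $d_r\cdot{\bf c}_r$ drops out from the monotonicity of $d_r$ and ${\bf c}_r$ in $r$.
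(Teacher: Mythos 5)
Your argument is correct and yields the stated bound, but it runs the reduction for the reductive quotient in the opposite direction from the paper. The paper first passes to $G/U$ via Sansuc's lemma (you instead re-derive $H^1(K,R_u(G)_K)=0$ by d\'evissage through $\G_a$'s, which is fine over a perfect field), and then uses the exact sequence $1\to R\to G\to G/R\to 1$ with $R$ the radical of the reductive group, i.e.\ a \emph{central} torus: it kills $H^1(K,R_K)$ first via Lemma~\ref{lemma:splitTorus} to get an \emph{injection} $H^1(K,G_K)\hookrightarrow H^1(K,G_K/R_K)$, and then kills the image in the semisimple quotient via Lemma~\ref{lemma:splitTorsor_semisimple}. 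You use the dual decomposition $1\to H^{\rm der}\to H\to T\to 1$, first killing the image in the torus quotient and then lifting to a torsor under the semisimple derived subgroup, exactly as in the inductive step of Lemma~\ref{lemma:splitTorsor_semisimple} itself. Both give $d_r\cdot{\bf c}_r$ since the rank of $T$ and the dimension of $H^{\rm der}$ are both at most $r$. A small advantage of your version is that it only uses exactness of the pointed-set sequence at the middle term (lifting along the connecting maps), whereas the paper's injectivity claim implicitly relies on the radical being central so that the twisted fibers of $H^1(K,G_K)\to H^1(K,G_K/R_K)$ are controlled by $H^1(K,R_K)$; your route sidesteps that twisting subtlety entirely.
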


\begin{proof}
Let $U\subset G$ be the unipotent radical. By \cite{San81}*{Lemma~1.13}, there is a canonical isomorphism $H^{1}(k,G)\cong H^{1}(k,G/U)$, which reduces the proof to the case when $G$ is reductive.

For a reductive group $G$, its radical $R\subset G$ is a torus \cite{Mil17}*{Corollary~17.62}. Let $s$ be the rank of $R$. By Lemma~\ref{lemma:splitTorus}, there exists an extension $K/k$ of degree bounded by ${\bf c}_s$ such that 
$
    H^1(K,R_K)=\{0\}.
$
This induces an injection
$$\xymatrix{
    0\ar[r] & H^1(K,G_K)\ar[r] & H^1(K,G_K/R_K).
}$$
Consider $[X_K]\in H^1(K,G_K)$ as an element in $H^1(K,G_K/R_K)$. Let $t\colonequals\dim(G/R)$ and notice that $G_K/R_K$ is semisimple. By Lemma~\ref{lemma:splitTorsor_semisimple}, there exists an extension $L/K$ of degree bounded by a constant $d_t$ such that $[X_L]$ is trivial in $H^1(L,G_L/R_L)$ and thus is trivial in $H^1(L,G_L)$. It follows that $X_L\cong G_L$. To complete the proof, observe that
$$
    [L:k] = [L:K][K:k]
    \leq d_t\cdot {\bf c}_s
    \leq d_r\cdot {\bf c}_r
$$
where the last inequality follows from the inequalities $s<r$ and $t<r$ as well as the fact that $d_r$ and ${\bf c}_r$ increase with $r$.
\end{proof}

\begin{cor}
\label{cor:UPD_torsor_linearAlgGp}
Retain the condition of Proposition~\ref{prop:splitTorsor_linearAlgGp} and assume further that $k$ is infinite. Then there exists an extension $L/k$ of degree bounded by $d_r\cdot{\bf c}_r$ such that $X(L)$ is dense. In particular, uniform potential density holds for the collection of torsors of connected linear algebraic groups of a fixed dimension over infinite perfect fields.
\end{cor}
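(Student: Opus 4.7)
The plan is to combine Proposition~\ref{prop:splitTorsor_linearAlgGp} with the classical density statement for rational points on connected linear algebraic groups over infinite perfect fields. The bulk of the work has already been done by that proposition; what remains is only to turn a trivialization of the torsor into density of its rational points.

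First, I would apply Proposition~\ref{prop:splitTorsor_linearAlgGp} directly to obtain a finite extension $L/k$ with $[L:k]\leq d_r\cdot\mathbf{c}_r$ such that $X_L\cong G_L$ as $L$-varieties. Since $L$ is a finite extension of an infinite perfect field, $L$ itself is infinite and perfect, so the problem reduces to showing that $G_L(L)$ is Zariski dense in $G_L$.

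Second, I would invoke the classical fact that, over an infinite perfect field, every connected linear algebraic group is unirational (Borel's theorem, e.g. \cite{Mil17}*{Theorem~17.93}), and any unirational variety over an infinite field has a Zariski dense set of rational points. Applied to $G_L$ over $L$, this yields density of $G_L(L)$ in $G_L$; transporting along the isomorphism $X_L\cong G_L$ then shows that $X(L)$ is Zariski dense in $X_L$. The uniform potential density assertion follows at once because $d_r\cdot\mathbf{c}_r$ depends only on $r=\dim X$.

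The only substantive input is the uniform trivialization already established in Proposition~\ref{prop:splitTorsor_linearAlgGp}; there is no real obstacle at this stage. It is worth noting that the perfectness hypothesis on $k$ enters precisely through the unirationality step, since a connected linear algebraic group with non-split unipotent radical need not be unirational over an imperfect field, which is why the collection in the statement is restricted to infinite perfect base fields.
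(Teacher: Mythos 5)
Your proof is correct and follows essentially the same route as the paper: apply Proposition~\ref{prop:splitTorsor_linearAlgGp} to trivialize the torsor over an extension of degree at most $d_r\cdot\mathbf{c}_r$, then use unirationality of connected linear algebraic groups over perfect fields to get density of rational points over the infinite field, and transport along the isomorphism. Your remark that $L$ inherits infiniteness and perfectness from $k$ is a slightly more careful phrasing of the same step.
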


\begin{proof}
By Proposition~\ref{prop:splitTorsor_linearAlgGp}, there exists an extension $L/k$ of degree bounded by $d_r\cdot{\bf c}_r$ such that $X_L\cong G_L$. Since $k$ is perfect, $G$ is unirational over $k$ \cite{Bor91}*{Theorem~18.2~(ii)}, which implies that $G(k)$ is dense as $k$ is infinite. This proves that $X(L)$ is dense via the isomorphism $X_L\cong G_L$.
\end{proof}

\begin{cor}
\label{cor:quasi-split}
Retain the condition of Proposition~\ref{prop:splitTorsor_linearAlgGp} and assume further that $G$ is quasi-split, that is, contains a Borel subgroup over $k$. Let $r$ denote the rank of $G$, that is, the rank of a maximal torus of $G$. Then there exists an extension $L/k$ of degree bounded by $\textbf{c}_r$ such that $X_L\cong G_L$.
\end{cor}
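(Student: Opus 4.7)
The plan is to reduce to the reductive case via the unipotent radical, then apply Steinberg's theorem on the reduction of torsors to maximal tori (the feature special to quasi-split groups), and finally split the resulting torus via Lemma~\ref{lemma:splitTorus}.

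First, let $U\subset G$ be the unipotent radical. As in the proof of Proposition~\ref{prop:splitTorsor_linearAlgGp}, Sansuc's lemma \cite{San81}*{Lemma~1.13} identifies $H^1(k,G)$ with $H^1(k,G/U)$. Since $U$ lies in every Borel of $G$, the image $B/U$ is a Borel of the reductive quotient $G/U$ defined over $k$, making $G/U$ quasi-split. A maximal torus of $G$ projects isomorphically onto one in $G/U$, so the rank $r$ is preserved, and we may replace $G$ by $G/U$ in order to assume $G$ is reductive.

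The key input is Steinberg's theorem on quasi-split reductive groups over perfect fields: every class in $H^1(k,G)$ lies in the image of $H^1(k,T')\to H^1(k,G)$ for some maximal $k$-torus $T'\subset G$, where the choice of $T'$ may depend on the class. Applied to $[X]$, this produces a maximal $k$-torus $T'$ of rank $r$ together with a preimage $\alpha\in H^1(k,T')$. By Lemma~\ref{lemma:splitTorus}, there is an extension $L/k$ with $[L:k]\leq\mathbf{c}_r$ over which $T'_L$ is split and $H^1(L,T'_L)=0$. Hence $\alpha$ dies in $H^1(L,T'_L)$, which forces $[X_L]=0\in H^1(L,G_L)$, that is, $X_L\cong G_L$. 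One can observe in passing that this argument recovers the $\PGL_2$ case: a non-trivial conic torsor is realized by an anisotropic rank-one torus $\mathrm{Res}_{F/k}^{(1)}\mathbb{G}_m$ for a quadratic $F/k$, and splitting this torus over $F$ kills the class, in agreement with $\mathbf{c}_1=2$.

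The main obstacle is obtaining Steinberg's theorem in the reductive quasi-split generality, since the classical statement (Steinberg, IHES 1965) is phrased for simply connected semisimple groups. To bridge the gap I would pass through the canonical central isogeny $Z(G)^{0}\times G^{\mathrm{sc}}\to G$ and chase the class $[X]$ through the resulting long exact sequences in Galois cohomology, applying the classical theorem to the $G^{\mathrm{sc}}$ factor. The maximal torus produced on $G^{\mathrm{sc}}$ then transfers to a maximal $k$-torus of $G$ of the same rank $r$, so the degree bound $\mathbf{c}_r$ is unaffected by this reduction.
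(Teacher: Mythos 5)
Your argument is exactly the paper's: Steinberg's theorem pushes $[X]$ into $H^1(k,T)$ for a $k$-torus $T$ of rank at most $r$, and Lemma~\ref{lemma:splitTorus} then kills that class over an extension of degree at most $\mathbf{c}_r$. The obstacle you flag at the end is not a real one --- the result the paper cites, Steinberg's Theorem~11.1 in \cite{Ste65}, is stated for quasi-split groups over perfect fields (not only for simply connected semisimple ones), so the proposed central-isogeny chase (which would in any case be delicate, as $H^1(k,Z(G)^{0}\times G^{\mathrm{sc}})\to H^1(k,G)$ need not be surjective) is unnecessary, and your reduction via the unipotent radical already handles the non-reductive case.
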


\begin{proof}
According to \cite{Ste65}*{Theorem~11.1}, there exists a torus $T\subset G$ such that the class $[X]\in H^1(k,G)$ lies in the image of $H^1(k,T)$. Note that $t\colonequals\operatorname{rank}(T)\leq r$. By Lemma~\ref{lemma:splitTorus}, there exists an extension $L/k$ of degree bounded by $\textbf{c}_t\leq\textbf{c}_r$ such that $H^1(L,T_L)=\{0\}$, which implies the statement.
\end{proof}

\begin{rmk}
\label{rmk:lowerBound}
The question about minimal degrees of the field extensions that split torsors under connected linear algebraic groups is also intensively studied. For relevant results and further references in this direction, we refer the reader to \cites{RY01,Tot04}.
\end{rmk}

%-----Connected algebraic groups
\subsubsection{Connected algebraic groups}
\label{subsubsect:UPD_AlgebraicGroup}

Now we have almost everything we need to proof the uniform potential density for connected algebraic groups in characteristic zero.

\begin{lemma}
\label{lemma:density_shortExactSequence}
Let $A$ be an abelian variety over a field $k$ of characteristic zero with an extension
$$\xymatrix{
    1\ar[r] & N\ar[r] & G\ar[r]^-{\pi} & A\ar[r] & 1
}$$
by a linear algebraic group $N$ over $k$. Suppose that $A(k)$ contains a nondegenerate point $p$ such that the coset $N_1\colonequals\pi^{-1}(p)\subset G$ contains a dense subset of $k$-rational points. Then $G(k)$ is dense in $G$.
\end{lemma}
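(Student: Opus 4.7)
The plan is to prove directly that $\overline{G(k)} = G$. Because $N_1(k)$ is dense in $N_1$, it is in particular nonempty, so I pick a $k$-rational point $g \in N_1(k) \subset G(k)$. This $g$ trivializes the $N$-torsor $N_1 = \pi^{-1}(p)$: the $k$-isomorphism $N \xrightarrow{\sim} N_1$, $n \mapsto g \cdot n$, identifies $N(k)$ with $N_1(k)$. Consequently $N(k)$ is Zariski dense in $N$.

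Set $H \colonequals \overline{G(k)} \subset G$. Since the Zariski closure of any subgroup of an algebraic group is again a closed subgroup (multiplication and inversion are morphisms), $H$ is a closed subgroup of $G$. It contains $N(k)$, hence also contains $N = \overline{N(k)}$. Any subgroup of $G$ containing $\ker(\pi) = N$ is saturated under the translation action of $N$, so $H = \pi^{-1}(B)$ where $B \colonequals \pi(H) \subset A$ is a subgroup containing $p = \pi(g)$.

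It remains to show $B = A$. By Chevalley's theorem on images of morphisms, $B$ is constructible in $A$. Its Zariski closure $\overline{B}$ is a closed subgroup of $A$ containing the nondegenerate point $p$, so it contains the dense cyclic subgroup $\bZ p$ and must equal $A$. A constructible subset whose closure is $A$ contains a nonempty Zariski open subset $U \subset A$, and in a connected algebraic group any nonempty open set satisfies $U \cdot U^{-1} = A$. Hence $B \supset U \cdot U^{-1} = A$, so $B = A$ and $H = \pi^{-1}(A) = G$, as required.

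The main subtlety is the last step, namely upgrading the density of $B$ in $A$ to the equality $B = A$; this is what forces us to combine constructibility of $B$ with the standard fact that a nonempty open subset of a connected algebraic group fills the group under $U \cdot U^{-1}$. Everything else reduces to the interplay between the torsor trivialization on the fiber $N_1$, which moves density from the $k$-points of $N_1$ to those of $N$, and the observation that a closed subgroup of $G$ containing $\ker(\pi)$ is automatically a preimage under $\pi$.
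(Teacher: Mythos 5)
Your proof is correct, but it takes a genuinely different route from the paper's. Both arguments open the same way: a point $g\in N_1(k)$ trivializes the torsor $N_1$ over $k$ and transports the density of $N_1(k)$ to $N(k)$. From there the paper argues fiberwise: it sets $N_m\colonequals\pi^{-1}(p^m)$ for every $m\in\bZ$, shows each $N_m$ is a trivial $N$-torsor (hence has dense $k$-points) by producing a rational point on it via the explicit maps $N_1^m\to N_m$, $x\mapsto x^m$, and $N_{-m}\to N_m$, $x\mapsto x^{-1}$, and concludes that $\bigcup_{m}N_m(k)$ is dense because it fills every fiber over the dense orbit $\{p^m\}$. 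You instead pass to the closed subgroup $H=\overline{G(k)}$, note that $H\supset N$ forces $H=\pi^{-1}(B)$ for the subgroup $B=\pi(H)\ni p$, and finish with the standard facts that a dense constructible subgroup of a connected algebraic group contains a dense open $U$ and that $U\cdot U^{-1}$ is the whole group. Your version is more structural and sidesteps the density-of-the-union step (which in the paper implicitly uses that $\pi$ is open, so that the closure of $\pi^{-1}(\{p^m\})$ is $\pi^{-1}(A)$); the paper's version is more hands-on and exhibits the dense set of rational points explicitly, without invoking Chevalley constructibility or the $U\cdot U^{-1}$ lemma. One small simplification available to you: since $H$ is a closed subgroup scheme, $B=\pi(H)$ is automatically a \emph{closed} subgroup of $A$ (images of homomorphisms of algebraic groups are closed), so the constructibility detour is not actually needed --- density of $B$ then immediately gives $B=A$.
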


\begin{proof}
For every $m\in\bZ$, we define $N_m\colonequals\pi^{-1}(p^m)\subset G$. Note that $N_0 = N$ and that every $N_m$ is an $N$-torsor under the action
$$
    N\times N_{m}\to N_{m}
    : (x,y)\mapsto xy.
$$
By hypothesis, $N_1$ is a trivial $N$-torsor. This implies that every $N_m$ is trivial since one can produce a $k$-rational point on $N_m$ via the morphisms
\begin{itemize}
    \item $N_1^m\to N_m:x\mapsto x^m$ if $m>0$,
    \item $N_{-m}\to N_m:x\mapsto x^{-1}$ if $m<0$.
\end{itemize}
It follows that $N_m(k)$ is dense in $N_m$ for all $m\in\bZ$. As a result, we obtain a Zariski dense subset
\[
    \bigcup_{m\in\bZ} N_m(k)\subset G(k)
\]
and hence $G(k)$ is dense in $G$.
\end{proof}

\begin{thm}
\label{thm:UPD_algebraicGroup}
The collection of connected algebraic groups of a fixed dimension over fields of characteristic zero satisfies uniform potential density.
\end{thm}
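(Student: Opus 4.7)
The plan is to reduce the statement to the two cases already handled in this section via Chevalley's structure theorem, and then to glue the two reductions together with Lemma~\ref{lemma:density_shortExactSequence}. Since every field of characteristic zero is infinite and perfect, the hypotheses required by Corollary~\ref{cor:UPD_torsor_linearAlgGp} will automatically be satisfied after any base extension.

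Let $G$ be a connected algebraic group of dimension $n$ over a field $k$ of characteristic zero. First I would invoke Chevalley's structure theorem to obtain a short exact sequence
\[
    1 \longrightarrow N \longrightarrow G \stackrel{\pi}{\longrightarrow} A \longrightarrow 1
\]
of group schemes over $k$, where $N$ is a connected linear algebraic group and $A$ is an abelian variety, each of dimension at most $n$. This exact sequence is exactly the setup of Lemma~\ref{lemma:density_shortExactSequence}, and it tells us what the two base changes need to accomplish: a nondegenerate point on $A$, plus a trivialization of the fiber torsor on $G$ sitting above it.

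Next I would apply Theorem~\ref{thm:nondeg_abelianVariety} to pass to a finite extension $L_1/k$ of degree bounded by a constant depending only on $\dim(A)\leq n$, so that $A(L_1)$ contains a nondegenerate point $p$. The scheme-theoretic fiber $N_1 \colonequals \pi^{-1}(p)$ is then a torsor under $N_{L_1}$ of dimension at most $n$ over the infinite perfect field $L_1$. Corollary~\ref{cor:UPD_torsor_linearAlgGp} therefore supplies a further finite extension $L_2/L_1$, of degree bounded by a constant depending only on $n$, such that $N_1(L_2)$ is Zariski dense in $(N_1)_{L_2}$.

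Finally, since Zariski density of the cyclic subgroup $\bZ p\subset A(L_1)$ is preserved by base extension, the point $p\in A(L_2)$ remains nondegenerate, so the hypotheses of Lemma~\ref{lemma:density_shortExactSequence} are met after base changing the exact sequence to $L_2$. That lemma then yields that $G(L_2)$ is Zariski dense in $G_{L_2}$, and $[L_2:k]$ is bounded by the product of the two uniform bounds above, a quantity depending only on $n$. I do not anticipate a serious obstacle: the only mildly delicate point is verifying that the torsor structure on $N_1$ and the nondegeneracy of $p$ behave well under the successive base changes, both of which are standard.
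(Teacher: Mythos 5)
Your proposal is correct and follows essentially the same route as the paper: Chevalley's structure theorem, a nondegenerate point on the abelian quotient via Theorem~\ref{thm:nondeg_abelianVariety}, trivializing the fiber torsor via Corollary~\ref{cor:UPD_torsor_linearAlgGp}, and concluding with Lemma~\ref{lemma:density_shortExactSequence}. The degree bookkeeping and the observation that nondegeneracy persists under base extension match the paper's argument.
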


\begin{proof}
Let $G$ be a connected algebraic group over a field $k$ of characteristic zero. By the Barsotti--Chevalley--Rosenlicht theorem (\cites{Bar55,Che60,Ros56}; see also \cite{Con02}*{Theorem~1.1}), there is a short exact sequence of algebraic $k$-groups
$$\xymatrix{
    1\ar[r] & N\ar[r] & G\ar[r]^-{\pi} & A\ar[r] & 1
}$$
where $N$ is a linear closed subgroup of $G$ and $A$ is an abelian variety.

Let $g\colonequals\dim(A)$ and $r\colonequals\dim(G)$. Notice that $g\leq r$. By Theorem~\ref{thm:nondeg_abelianVariety}, there exists an extension $L/k$ of degree bounded by
$$
    6^{4g(g+1)}\prod_{i=1}^{g}(8i)!
    \leq 6^{4r(r+1)}\prod_{i=1}^{r}(8i)!
$$
such that $A(L)$ contains a nondegenerate point. On the other hand, $N$ is unirational over $k$ \cite{Bor91}*{Theorem~18.2~(ii)}, which implies that $N(k)$ is dense in $N$ as $k$ is infinite in our case. Note that the coset $N_1\colonequals\pi^{-1}(p)$ is an $N$-torsor under the action
$$
    N\times N_1\to N_1
    : (x,y)\mapsto xy.
$$
Let $n\colonequals\dim(N)$. By Corollary~\ref{cor:UPD_torsor_linearAlgGp}, there exists an extension $L'/L$ of degree bounded by $d_n\cdot{\bf c}_n$ such that $N_1(L')$ is dense in $N_1(L')$. (See Lemma~\ref{lemma:splitTorsor_semisimple} and  Definition~\ref{defn:c_n} for the definition of $d_r$ and ${\bf c}_r$.)

The fact that $N_1(L')$ is dense in $N_1(L')$, together with the existence of a nondegenerate point in $A(L)$ (and thus in $A(L')$), implies that $G(L')$ is dense in $G$ by Lemma~\ref{lemma:density_shortExactSequence}. To complete the proof, notice that $d_n\cdot{\bf c}_n\leq d_r\cdot{\bf c}_r$ since $d_r$ and ${\bf c}_r$ are increasing in $r$. Therefore,
$$
    [L':k] = [L':L][L:k]
    \leq d_r\cdot{\bf c}_r
    \cdot 6^{4r(r+1)}\prod_{i=1}^{r}(8i)!
$$
where the last term depends only on $r=\dim(G)$.
\end{proof}

%--------------------UPD for elliptic K3 surfaces
\section{Uniform potential density for elliptic K3 surfaces}
\label{sect:UPD_ellipticK3}

In this section, we prove that uniform potential density holds for the collection of elliptic K3 surfaces over number fields. Throughout \S\ref{subsect:NS_EllipticK3}, the ground field $k$ is assumed to be perfect, or arbitrary with $\operatorname{char}(k)>5$. In the remaining part of the section, we will specialize to number fields unless otherwise noted.

%----------N\'eron--Severi groups and automorphisms
\subsection{Picard groups and automorphisms}
\label{subsect:NS_EllipticK3}

Let $X$ be an elliptic K3 surface over $k$ and let $k^s$ denote the separable closure of $k$. We want to find finite extensions $L/k$ where
\begin{itemize}
    \item all the elements in $\Pic(X_{k^s})$ descend to $\Pic(X_L)$, or
    \item a chosen element in $\Aut(X_{k^s})$ is defined.
\end{itemize}
In each case, we will show that the minimal degree of such extensions $L/k$ is bounded by a constant independent of $X$. These results will be used to prove the uniform potential density respectively in the cases $\rho(X_{k^s})\leq 19$ and $\rho(X_{k^s})=20$.

%-----Defining fields of N\'eron--Severi groups
\subsubsection{Defining fields of Picard groups}
\label{subsubsect:defineNeronSeveri}

Let $X$ be an elliptic K3 surface over a field $k$. For every separable extension $L/k$, there are natural injections
\begin{equation}
\label{eqn:twoInclusionsNS}
\xymatrix{
    \Pic(X_L)\ar@{^(->}[r]
    & \Pic(X_{k^s})^{\Gal(k^s/L)}\ar@{^(->}[r]
    & \Pic(X_{k^s}).
}
\end{equation}
Our goal is to find a separable extension $L/k$ such that the two injections become isomorphisms with the degree $[L:k]$ bounded. Let us discuss the injection on the left first.

In general, for every proper and geometrically integral variety $X$ over a field $k$, there is an exact sequence \cite{Poo17}*{Corollary~6.7.8}:
$$\xymatrix{
    0\ar[r]
    & \Pic(X)\ar[r]
    & \Pic(X_{k^s})^{\Gal(k^s/k)}\ar[r]
    & \Br(k)\ar[r]
    & \Br(X)
}$$
If $X(k)\neq\emptyset$, then the morphism $\Spec(k)\to X$ induces a section $\Br(X)\to\Br(k)$. Under this condition, the last map of the sequence is injective, which forces the second map to be an isomorphism
$$\xymatrix{
    \Pic(X)\ar[r]^-\sim
    & \Pic(X_{k^s})^{\Gal(k^s/k)}.
}$$
Therefore, to make the first injection in \eqref{eqn:twoInclusionsNS} an isomorphism, it is sufficient to find a separable extension $L/k$ such that $X(L)\neq\emptyset$.

Let us analyze this property for a general elliptic surface. Recall that the Euler--Poincar\'e characteristic $\chi(Z)$ is defined by
$$
    \chi(Z) = \sum_{i} (-1)^i\dim H_{\mathrm{et}}^i(Z_{\kbar},\bQ_\ell),
    \quad\text{where }\;
    \ell\neq\operatorname{char}(k).
$$
For an elliptic surface $\pi\colon X\to C$, we have \cite{CD89}*{Proposition~5.1.6}:
\begin{equation}
\label{eqn:euler_ellipticSurf}
    \chi(X) = \sum_{b\in C(\overline{k})}\chi(X_b).
\end{equation}
Here the sum is finite as $\chi(X_b) = 0$ if $X_b$ is smooth. Given a fiber $X_b$ over $b\in C(k^s)$, we denote by $\#X_b$ the number of its geometrically irreducible components (ignoring the multiplicities). For a singular $X_b$, we have $\chi(X_b) = \#X_b$ or $\chi(X_b) = \#X_b+1$ depending on its type. The assumption that $k$ is perfect or arbitrary with $\operatorname{char}{k}>5$ implies that the singular fibers are all of classical Kodaira types. (See \cite{CD89}*{Proposition~3.1.1} for the perfect case and \cite{LLR04}*{Appendix~A} for the other case.)

\begin{lemma}
\label{lemma:ELS_ellipticSurface}
Let $\pi\colon X\to C$ be an elliptic surface over $k$ and let $X_b\subset X$ be a singular fiber over $b\in C(k^s)$ with minimal possible $\chi(X_b)$. Then, for any reduced and geometrically irreducible component $D\subset X_b$, there exists a separable extension $L/k$ of degree at most $2\chi(X)$ such that $D$ is birational to $\bP^1_L$. In particular, we have $X(L)\supset D(L)\neq\emptyset$.
\end{lemma}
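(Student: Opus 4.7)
The plan is to use additivity of the Euler characteristic to simultaneously bound the Galois orbit of $b$ among the minimal-$\chi$ singular fibers and the Galois orbit of $D$ among the components of $X_b$, and then to pay at most a single quadratic extension to pass from geometric rationality of $D$ to birationality with $\bP^1$.

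Setting $m := \chi(X_b)$, the formula \eqref{eqn:euler_ellipticSurf} together with $\chi(X_{b'}) = 0$ on smooth fibers and $\chi(X_{b'}) \geq m$ on singular ones forces the Galois-stable set $S := \{b' \in C(k^s) : \chi(X_{b'}) = m\}$ to satisfy $|S| \leq \chi(X)/m$, whence $[k(b):k] \leq |S| \leq \chi(X)/m$. Over $k(b)$ the fiber $X_b$ decomposes into $\#X_b$ geometrically irreducible components, whose reduced members are permuted by $\Gal(k^s/k(b))$. A glance at the Kodaira classification --- valid under our hypotheses by \cite{CD89}*{Proposition~3.1.1} and \cite{LLR04}*{Appendix~A} --- yields the uniform inequality $\#X_b \leq \chi(X_b) = m$, so $[k(D):k(b)] \leq \#X_b$ and
\[
    [k(D):k] \leq \#X_b \cdot |S| \leq m \cdot \frac{\chi(X)}{m} = \chi(X).
\]

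It remains to realize $D$ as birational to $\bP^1$ after a further extension $L/k(D)$ of degree at most $2$. Every component of a Kodaira fiber is a geometrically rational curve, so $D$ over $k(D)$ is either (i) smooth, in which case the anticanonical map embeds $D$ as a plane conic and any $k(D)$-line cuts out a divisor of degree $2$ whose residue field supplies such an $L$; or (ii) singular, which by the classification only happens in types $I_1$ (nodal) and $II$ (cuspidal), and then the unique singular point of $X_b$ is Galois-fixed, hence $k(b)$-rational, so the normalization map already exhibits $D$ as birational to $\bP^1_{k(D)}$. Multiplying degrees gives $[L:k] \leq 2\chi(X)$, and $X(L) \supset D(L) \neq \emptyset$ is then immediate.

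I expect the main obstacle to be the bookkeeping around reduced versus non-reduced components: I must confirm that the Galois orbit of a reduced component $D$ stays among the reduced components (so that the bound $[k(D):k(b)] \leq \#X_b$ is not inflated), and that the inequality $\#X_b \leq \chi(X_b)$ really holds uniformly in the Kodaira list, including the starred types with non-reduced central chains. These are routine case-by-case checks, but they are precisely what ties the argument to the classical Kodaira classification and hence to the hypothesis that $k$ is perfect or of characteristic greater than $5$.
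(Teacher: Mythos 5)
Your argument is essentially the paper's own proof: bound the Galois orbit of $b$ among the minimal-$\chi$ singular fibers using the additivity formula \eqref{eqn:euler_ellipticSurf}, bound the orbit of $D$ among the components of $X_b$ by $\#X_b\leq\chi(X_b)$, and spend a final quadratic extension to turn the genus-zero normalization into $\bP^1$; the paper multiplies $\chi(X_b)\cdot|\Sigma^s|\leq\sum_{b'}\chi(X_{b'})=\chi(X)$ where you write $m\cdot\chi(X)/m$, which is the same accounting. One small slip: in your case (ii), for a nodal $D$ (type $\mathrm{I}_1$) the rationality of the node does not make the normalization isomorphic to $\bP^1_{k(D)}$, since the two branches over the node may be conjugate; but this is harmless, as the quadratic extension you already budget (or the paper's uniform ``normalization is a conic, split it by a degree-$2$ extension'' step) covers this case within the bound $2\chi(X)$.
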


\begin{proof}
Consider the set
$$
    \Sigma^s\colonequals\{
        b\in C(k^s)
        \mid
        X_b\text{ is singular}
    \}\neq\emptyset.
$$
Then there exists a separable extension $L_1/k$ of degree at most $|\Sigma^s|$ such that the point $b\in \Sigma^s$, and thus $X_b$, is defined over $L_1$. According to Kodaira's table, the component $D$ appears in $X_b$ either as a reduced component or with higher multiplicity divisible by only $2$, $3$, or $5$. Because $\operatorname{char}(k)>5$, this implies that $D$ is defined over a separable extension $L_2/L_1$ of degree at most $\#X_b\leq\chi(X_b)$. The normalization of $D$ is a smooth conic, which is isomorphic to $\bP^1_L$ over a separable extension $L/L_2$ of degree at most $2$. In particular, $D_L$ is birational to $\bP^1_L$ and thus $X(L)\supset D(L)\neq\emptyset$. About the degree of $L/k$, we have
$$
    [L:k]
    = [L:L_2][L_2:L_1][L_1:k]
    \leq 2\chi(X_b)|\Sigma^s|.
$$
Using the minimality of $\chi(X_b)$ and \eqref{eqn:euler_ellipticSurf}, we obtain
$$
    2\chi(X_b)|\Sigma^s|
    \leq 2\sum_{b\in\Sigma^s}\chi(X_b)
    \leq 2\chi(X).
$$
Combine this inequality with the one above, we conclude that $[L:k]\leq 2\chi(X)$.
\end{proof}

\begin{lemma}
\label{lemma:sepSingularFiber}
Every elliptic K3 surface $X$ over $k$ has a singular fiber over $k^s$.
\end{lemma}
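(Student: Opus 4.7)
To prove Lemma~\ref{lemma:sepSingularFiber}, my plan is to exploit the fact that every K3 surface has topological Euler characteristic $\chi(X)=24$, combined with the additivity formula \eqref{eqn:euler_ellipticSurf}. First I would apply that formula to obtain
\[
    24 \;=\; \chi(X) \;=\; \sum_{b\in C(\kbar)} \chi(X_b).
\]
Smooth genus-one fibers contribute $\chi(X_b)=0$, while every singular Kodaira fiber contributes a positive integer (the hypothesis that $k$ is perfect or $\operatorname{char}(k)>5$ guarantees only classical Kodaira types occur). Hence the sum is strictly positive, so singular fibers exist over $\kbar$. In the case when $k$ is perfect, $\kbar=k^s$ and we are done.

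The hard part will be descending from $\kbar$ to $k^s$ when $k$ is imperfect of characteristic $p>5$, since $\kbar/k^s$ may be a nontrivial purely inseparable extension and a point $b\in C(\kbar)$ with $X_b$ singular need not lie in $C(k^s)$. My plan here is to encode the singular fibers as a degree-$24$ effective divisor $\Delta$ on $C=\bP^1_k$ defined over $k$. Concretely, I would pass to the Jacobian fibration $J\to C$ of $\pi\colon X\to C$: it carries a canonical $k$-rational zero section, admits a Weierstrass model over $k$, and has the same singular fiber locus as $X\to C$. The discriminant of this Weierstrass model gives the desired $\Delta$, and its total degree equals $\sum_b \chi(X_b)=24$, so under the identification of an affine chart $\bA^1\subset\bP^1$ it corresponds to a polynomial $\Delta(t)\in k[t]$ of degree $24$.

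To finish, I would factor $\Delta = \prod_i h_i^{n_i}$ into distinct irreducibles over $k$. Each irreducible $h_i$ is either separable or of the form $h_i(t)=g_i(t^{p^{m_i}})$ with $g_i$ separable and $m_i\geq 1$, and in the latter case $p\mid\deg h_i$. If no $h_i$ were separable, then $p\mid\deg\Delta=24$, which is impossible since no prime strictly greater than $5$ divides $24$. Hence some $h_i$ is separable, and any root $t_0\in k^s$ of such an $h_i$ yields a point $b\in C(k^s)$ over which $X$ has a singular fiber. The divisibility obstruction $p\nmid 24$ for $p>5$ is exactly what closes the gap between $\kbar$ and $k^s$.
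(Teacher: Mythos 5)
Your proposal is correct and follows essentially the same route as the paper: the Euler characteristic formula $\chi(X)=24$ handles the perfect case, and for imperfect $k$ with $\operatorname{char}(k)>5$ the paper likewise passes to the Jacobian fibration, takes the degree-$24$ discriminant of its Weierstrass model, and concludes from $p\nmid 24$ that the discriminant cannot vanish only at inseparable points. The only cosmetic difference is that the paper transfers singular fibers from $J(X)$ back to $X$ via the dominant rational map $X\dashrightarrow J(X)$ rather than by asserting equality of the singular loci, but the substance is identical.
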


\begin{proof}
In the case that $k$ is perfect, we have $k^s = \overline{k}$. Suppose, to the contrary, that $X_{\overline{k}}$ has no singular fiber. Then $\chi(X) = 0$ by \eqref{eqn:euler_ellipticSurf}, which contradicts to the fact that $\chi(X) = 24$ for every K3 surface $X$. Therefore, $X$ has a singular fiber over $k^s=\overline{k}$.

Now assume that $k$ is arbitrary with $\operatorname{char}{(k)}>5$. When $X$ has a section, it admits a Weierstrass model $W\to\bP^1$ (see, e.g., \cite{Huy16}*{\S11.2}):
$$
    y^2z = 4x^3 - g_2xz^2 - g_3z^3
$$
where $g_2\in H^0(\bP^1,\cO_{\bP^1}(8))$ and $g_3\in H^0(\bP^1,\cO_{\bP^1}(12))$. Its discriminant is given by
$$
    \Delta = g_2^3 -27g_3^2
    \;\in\; H^0(\bP^1,\cO_{\bP^1}(24)).
$$
If $W$ has no singular fiber over $k^s$, then $\operatorname{char}(k)$ divides $\deg(\Delta) = 24$, which contradicts to the hypothesis that $\operatorname{char}(k)>5$. Therefore, $W$ has a singular fiber over $k^s$. This implies that $X$ has a singular fiber over $k^s$ as well.

In general, we can associate $X$ with its Jacobian fibration $J(X)$, which is an elliptic K3 surface with a section. There exists a dominant rational map $f\colon X\dashrightarrow J(X)$ preserving the fibrations, which acts by mapping a smooth fiber to its Jacobian. (See \S\ref{subsubsect:JacobEllipticK3} for a brief introduction on $J(X)$.) We have proved that $J(X)$ has a singular fiber over $k^s$, so $X$ has a singular fiber over $k^s$ as $f$ maps a singular fiber to a singular fiber.
\end{proof}

\begin{cor}
\label{cor:ELS_ellipticK3}
Let $X$ be an elliptic K3 surface over $k$. Then there exists a separable extension $L/k$ of degree at most $48$ such that, for every separable extension $L'/L$, the natural inclusion
\[\xymatrix{
    \Pic(X_{L'})\ar@{^(->}[r] &
    \Pic(X_{k^s})^{\Gal(k^s/L')}
}\]
is an isomorphism
\end{cor}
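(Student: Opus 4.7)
The plan is to combine Lemma~\ref{lemma:sepSingularFiber}, Lemma~\ref{lemma:ELS_ellipticSurface}, and the preliminary discussion in \S\ref{subsubsect:defineNeronSeveri} on how the existence of a rational point forces the inclusion $\Pic(X)\hookrightarrow\Pic(X_{k^s})^{\Gal(k^s/k)}$ to be an isomorphism. The target is to locate a separable extension $L/k$ of small degree producing a rational point on $X$; the isomorphism over any further extension $L'/L$ then comes for free since $X(L)\subset X(L')$.

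First I would invoke Lemma~\ref{lemma:sepSingularFiber} to ensure that $X$ has a singular fiber over $k^s$, so that the set $\Sigma^s$ appearing in the proof of Lemma~\ref{lemma:ELS_ellipticSurface} is nonempty. Then I would apply Lemma~\ref{lemma:ELS_ellipticSurface} to obtain a separable extension $L/k$ of degree at most $2\chi(X)$ together with a reduced geometrically irreducible component $D$ of some singular fiber that becomes birational to $\bP^1_L$; in particular $X(L)\supset D(L)\neq\emptyset$. Since $\chi(X)=24$ for every K3 surface, this gives the degree bound $[L:k]\leq 48$.

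Next, for any separable extension $L'/L$, I would note that $X(L')\supset X(L)\neq\emptyset$, so $X_{L'}$ carries an $L'$-rational point. Applying the exact sequence
$$
0\longrightarrow\Pic(X_{L'})\longrightarrow\Pic(X_{k^s})^{\Gal(k^s/L')}\longrightarrow\Br(L')\longrightarrow\Br(X_{L'})
$$
recalled in \S\ref{subsubsect:defineNeronSeveri}, the rational point $X(L')\neq\emptyset$ produces a section of $\Br(X_{L'})\to\Br(L')$, making the last arrow injective. This forces the inclusion $\Pic(X_{L'})\hookrightarrow\Pic(X_{k^s})^{\Gal(k^s/L')}$ to be an isomorphism, which is exactly the conclusion. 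No step here is a real obstacle; the content of the corollary is the uniform degree bound, and this is supplied by the $\chi(X)=24$ input combined with Lemma~\ref{lemma:ELS_ellipticSurface}.
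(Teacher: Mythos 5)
Your proposal is correct and follows exactly the paper's argument: Lemma~\ref{lemma:sepSingularFiber} plus Lemma~\ref{lemma:ELS_ellipticSurface} give a separable extension $L/k$ of degree at most $2\chi(X)=48$ with $X(L)\neq\emptyset$, and then the exact sequence from \S\ref{subsubsect:defineNeronSeveri} (which the paper invokes implicitly) yields the isomorphism over every $L'/L$. No issues.
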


\begin{proof}
The surface $X$ has a singular fiber over $k^s$ by Lemma~\ref{lemma:sepSingularFiber}, so Lemma~\ref{lemma:ELS_ellipticSurface} guarantees the existence of a separable extension $L/k$ of degree at most $2\chi(X) = 2\cdot 24 = 48$ such that $X(L)\neq\emptyset$. Hence $X(L')\neq\emptyset$ for every separable extension $L'/L$, which implies the conclusion.
\end{proof}

Recall that by Theorem~\ref{thm:minkowski}, the group $\GL(n,\bZ)$ contains, up to isomorphism, only finitely many finite subgroups and $\mathbf{c}_n$ denotes the largest possible cardinality of such subgroup. Now let us study the injection on the right in \eqref{eqn:twoInclusionsNS}.

General versions of the following lemma for K3 surfaces have appeared in the literature under the assumption of the existence of a rational point (e.g. \cite{Huy16}*{Lemma~17.2.6}). The version we present here is specific to elliptic K3 surfaces without this assumption.

\begin{lemma}
\label{lemma:defineNeronSeveri}
Let $X$ be an elliptic K3 surface over $k$ and denote $\rho\colonequals\rho(X_{k^s})$. Then there exists a separable extension $L/k$ of degree at most $48\mathbf{c}_\rho$ such that there is a natural isomorphism
\[\xymatrix{
    \Pic(X_L)\ar[r]^-\sim & \Pic(X_{k^s}).
}\]
\end{lemma}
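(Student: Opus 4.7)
The plan is to produce $L$ as the compositum of two extensions handling the two injections in \eqref{eqn:twoInclusionsNS} separately, and then to bound each factor's degree.

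\textbf{Step 1: the left injection.} By Corollary~\ref{cor:ELS_ellipticK3}, there is a separable extension $L_1/k$ of degree at most $48$ such that, for any separable extension $L'/L_1$, the natural inclusion $\Pic(X_{L'}) \hookrightarrow \Pic(X_{k^s})^{\Gal(k^s/L')}$ is an isomorphism.

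\textbf{Step 2: the right injection.} The Galois action on $\Pic(X_{k^s})\cong \bZ^{\rho}$ determines a representation
\[
    \phi\colon\Gal(k^s/k)\longrightarrow\GL(\rho,\bZ).
\]
The key observation is that the image of $\phi$ is \emph{finite}. Indeed, $\Pic(X_{k^s})$ is finitely generated, and each generator can be represented by a divisor defined over some finite separable extension of $k$; taking the compositum of these finitely many extensions inside $k^s$ yields a finite extension $K/k$ such that $\Gal(k^s/K)\subset\ker(\phi)$, so $\operatorname{im}(\phi)$ is a quotient of the finite group $\Gal(K/k)$. Let $L_2\subset k^s$ be the fixed field of $\ker(\phi)$, so that $\Gal(L_2/k)\cong\operatorname{im}(\phi)$ is a finite subgroup of $\GL(\rho,\bZ)$. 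By Definition~\ref{defn:c_n},
\[
    [L_2:k] = |\Gal(L_2/k)| \leq \mathbf{c}_\rho.
\]
By construction, $\Gal(k^s/L_2)$ acts trivially on $\Pic(X_{k^s})$.

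\textbf{Step 3: combining.} Set $L\colonequals L_1\cdot L_2$, the compositum inside $k^s$. Then
\[
    [L:k]\leq [L_1:k]\cdot[L_2:k]\leq 48\,\mathbf{c}_\rho.
\]
Since $L\supset L_1$, Step 1 gives $\Pic(X_L)\cong\Pic(X_{k^s})^{\Gal(k^s/L)}$. Since $L\supset L_2$, we have $\Gal(k^s/L)\subset\Gal(k^s/L_2)$, so $\Gal(k^s/L)$ acts trivially on $\Pic(X_{k^s})$, i.e.\ $\Pic(X_{k^s})^{\Gal(k^s/L)} = \Pic(X_{k^s})$. Chaining the two isomorphisms gives the desired $\Pic(X_L)\xrightarrow{\sim}\Pic(X_{k^s})$.

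The only non-routine point is the finiteness of $\operatorname{im}(\phi)$ in Step 2, which is the standard consequence of the finite generation of $\Pic(X_{k^s})$ for a K3 surface; the rest of the argument is just bookkeeping of degrees using Theorem~\ref{thm:minkowski} and Corollary~\ref{cor:ELS_ellipticK3}.
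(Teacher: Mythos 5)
Your proof is correct and follows essentially the same route as the paper: first invoke Corollary~\ref{cor:ELS_ellipticK3} to handle the left injection in \eqref{eqn:twoInclusionsNS} with a degree-$48$ extension, then use finiteness of the image of the Galois representation on $\Pic(X_{k^s})$ together with Minkowski's bound $\mathbf{c}_\rho$ for the right injection. The only cosmetic difference is that you construct the second extension over $k$ and take a compositum, whereas the paper builds it directly as an extension of $L_1$; both give the bound $48\,\mathbf{c}_\rho$.
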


\begin{proof}
By Corollary~\ref{cor:ELS_ellipticK3}, there exists a separable extension $L_1/k$ of degree at most $48$ such that, for every separable extension $L'/L_1$, there is a natural isomorphism
\begin{equation}
\label{eqn:NS-GaloisInvNS}
\xymatrix{
    \Pic(X_{L'})\ar[r]^-\sim &
    \Pic(X_{k^s})^{\Gal(k^s/L')}.
}
\end{equation}
The Galois action on $\Pic(X_{k^s})$ induces a group homomorphism
\[\xymatrix{
    \sigma\colon\Gal(k^s/L_1)\ar[r] & \GL(\rho,\bZ).
}\]
Since $\Pic(X_{k^s})$ is finitely generated, the kernel is finite index in $\Gal(k^s/k)$, corresponding to a finite extension $L/L_1$. Hence, $\Gal(k^s/L)=\ker(\sigma)$. This identifies the finite group $\Gal(L/L_1)$ as a subgroup of $\GL(\rho,\bZ)$, and so $[L:L_1] = |\Gal(L/L_1)|$ is bounded by $\mathbf{c}_\rho$. We conclude that there exists a Galois extension $L/L_1$ of degree at most $\mathbf{c}_\rho$ such that
\begin{equation}
\label{eqn:galoisInvNS-NSbar}
    \Pic(X_{k^s})^{\Gal(k^s/L)}
    =\Pic(X_{k^s}).
\end{equation}
The desired isomorphism follows immediately from \eqref{eqn:NS-GaloisInvNS} and \eqref{eqn:galoisInvNS-NSbar}.
Moreover, we have $[L:k] = [L:L_1]\cdot[L_1:k] \leq \mathbf{c}_\rho\cdot 48$.
\end{proof}

\iffalse
%The following remark is deleted as required by item (26) of the first referee's report.
\begin{rmk}
Suppose that $k$ is perfect and that $X$ admits a section. In this case, Kuwata \cite{Kuw90}*{Theorem~1.1} proved that, with $L\colonequals k(\mu_3)$ where $\mu_3$ is the third root of unity, there exists a positive integer $m$ such that
\[
    m\Pic(X_{\kbar})\cong m\Pic(X_L)
\]
This would imply that $\Pic(X_{\kbar})\cong\Pic(X_L)$. Indeed, given $D\in\Pic(X_{\kbar})$, the above isomorphism implies that $mD = mD'$ for some $D'\in\Pic(X_L)$. Because $\Pic(X_{\kbar})$ is torsion-free, $D = D'$ and thus $D\in\Pic(X_L)$.
\end{rmk}
\fi

%-----Defining fields of automorphisms on K3 surfaces
\subsubsection{Defining fields of automorphisms on K3 surfaces}
\label{subsubsect:defineAutK3}

Let us introduce some notations first. For a K3 surface $X$ over $k$, the group $\Aut(X_{k^s})$ acts on $\Pic(X_{k^s})$ by pulling back line bundles,
\[\xymatrix@R=.2em{
    \Aut(X_{k^s})\ar[r] & \mathrm{O}(\Pic(X_{k^s}))
    :f\ar@{|->}[r] & f^*.\\
}\]
We will also consider the action of $\Gal(k^s/k)$ on $\Aut(X_{k^s})$ by
$$\xymatrix@R=.1em{
    \Gal(k^s/k)\times\Aut(X_{k^s})\ar[r] & \Aut(X_{k^s})\\
    (\sigma, f)\ar@{|->}[r] & f^\sigma\colonequals\sigma^{-1}f\sigma.
}$$
For every separable extension $L/k$, we have
$$
    \Aut(X_L)=\Aut(X_{k^s})^{\Gal(k^s/L)}
$$
under the above action.

\begin{prop}
\label{prop:automorphism}
Let $X$ be a K3 surface over $k$ that satisfies $\Pic(X) = \Pic(X_{k^s})$. Then, for every $f\in\Aut(X_{k^s})$, there exists an integer $n>0$ such that $f^n\in\Aut(X)$.
\end{prop}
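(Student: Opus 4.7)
The plan is to combine the classical finiteness of the kernel of the natural representation of $\Aut(X_{k^s})$ on the Picard lattice with a short group-theoretic trick. Set $G\colonequals\Aut(X_{k^s})$ and
\[
    K\colonequals\ker\bigl(G\longrightarrow\mathrm{O}(\Pic(X_{k^s}))\bigr),\quad f\longmapsto f^*.
\]
The essential geometric input is that, for a projective K3 surface over a separably closed field, $K$ is a finite group (see, e.g., Huybrechts' \emph{Lectures on K3 Surfaces}); I plan to invoke this as a black box.

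Given this, the hypothesis $\Pic(X)=\Pic(X_{k^s})$ forces $\Gal(k^s/k)$ to act trivially on $\Pic(X_{k^s})$. Consequently, for every $\sigma\in\Gal(k^s/k)$ and every $f\in G$,
\[
    (f^\sigma)^* = \sigma^*\circ f^*\circ(\sigma^*)^{-1} = f^*,
\]
so $f^\sigma f^{-1}\in K$. In particular, the image $\bar f$ of $f$ in $G/K$ is Galois-invariant. Next, since $K\triangleleft G$ is finite, conjugation defines a homomorphism $G\to\Aut(K)$ whose kernel $C_G(K)$ has finite index, bounded by $|\Aut(K)|$. Hence a suitable power $g\colonequals f^{n_1}$ lies in $C_G(K)$; writing $g^\sigma=g\,\ell_\sigma$ with $\ell_\sigma\in K$ (possible because $\bar g$ is Galois-invariant), the commutation of $g$ with $\ell_\sigma$ yields
\[
    (g^n)^\sigma = (g^\sigma)^n = (g\,\ell_\sigma)^n = g^n\ell_\sigma^n.
\]
Taking $n_2\colonequals|K|$ and applying Lagrange's theorem gives $\ell_\sigma^{n_2}=e$ for every $\sigma$, so $(g^{n_2})^\sigma=g^{n_2}$ and therefore $f^{n_1 n_2}\in\Aut(X_{k^s})^{\Gal(k^s/k)}=\Aut(X)$.

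The only non-routine ingredient is the finiteness of $K$ for a projective K3 surface, which I plan to invoke as an external result. The remainder is the standard \textquotedblleft pass to a power that commutes with a normal finite subgroup, then kill the resulting cocycle\textquotedblright\ manipulation and should present no real difficulty; one even gets the effective upper bound $n \leq |\Aut(K)|\cdot|K|$ for the required exponent.
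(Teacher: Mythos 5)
Your proof is correct, and after the shared first step it takes a genuinely different route from the paper's. Both arguments begin identically: the triviality of the Galois action on $\Pic(X_{k^s})$ gives $(f^\sigma)^* = f^*$, so $f^\sigma f^{-1}$ lies in the finite kernel $K$ of $\Aut(X_{k^s})\to\mathrm{O}(\Pic(X_{k^s}))$ (the paper cites the same finiteness result of Huybrechts). From there the paper argues pointwise in $\sigma$: it applies the observation to every power $f^\ell$, notes that the set $\{(f^\sigma)^\ell f^{-\ell} \mid \ell\in\bZ\}$ sits inside the finite group $K$, extracts by pigeonhole an exponent $m = m(\sigma)$ with $(f^m)^\sigma = f^m$, and then globalizes by using that $f$ is fixed by a finite-index subgroup of $\Gal(k^s/k)$, so only finitely many coset representatives need to be handled and the product of the corresponding exponents works. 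Your argument instead produces a single exponent uniformly in $\sigma$: you pass to a power $g = f^{n_1}$ centralizing $K$ (legitimate, since $G/C_G(K)$ embeds in the finite group $\Aut(K)$), write the cocycle $g^\sigma = g\,\ell_\sigma$ with $\ell_\sigma\in K$, and kill it by raising to the power $|K|$. This buys you the explicit bound $n\leq|\Aut(K)|\cdot|K|$ and entirely avoids the coset-representative step; the paper's route uses less group-theoretic machinery but needs that extra patching argument. Both rest on exactly the same geometric input, and all the group-theoretic steps in your version (normality of $K$, finiteness of the index of $C_G(K)$, commutation of $g$ with $\ell_\sigma$) check out.
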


\begin{proof}
By hypothesis, $\Gal(k^s/k)$ acts on $\Pic(X_{k^s})$ trivially. Let us fix some  $f\in\Aut(X_{k^s})$ and $\sigma\in\Gal(k^s/k)$. Then, for every $\mathcal{L}\in\Pic(X_{k^s})$, we have
$$
    (f^\sigma)^*\mathcal{L}
    = (\sigma^{-1}f\sigma)^*\mathcal{L}
    = \sigma f^*(\sigma^{-1})\mathcal{L}
    = \sigma(f^*\mathcal{L})
    = f^*\mathcal{L}
$$
where the hypothesis on the Galois action is applied to the last two equalities.
Hence
$$
    (f^\sigma f^{-1})^*\mathcal{L}
    = (f^{-1})^*(f^\sigma)^*\mathcal{L}
    = (f^*)^{-1}(f^\sigma)^*\mathcal{L}
    = \mathcal{L}.
$$
This implies that $f^\sigma f^{-1}\in\ker(\Aut(X_{k^s})\to\mathrm{O}(\Pic(X_{k^s})))$.

Applying the above argument to $f^\ell$ where $\ell\in\bZ$, we conclude that
$$
    \{(f^\sigma)^\ell f^{-\ell} \mid \ell\in\bZ\}
    \subset\ker(\Aut(X_{k^s})\to\mathrm{O}(\Pic(X_{k^s}))).
$$
The kernel on the right is finite by \cite{Huy16}*{Proposition~5.3.3}, so the set on the left is finite as well. In particular, there exist distinct integers $i$ and $j$ such that
$$
    (f^\sigma)^i f^{-i}
    = (f^\sigma)^j f^{-j},
    \text{ or equivalently, }
    f^{i-j}
    = (f^\sigma)^{i-j}
    = (f^{i-j})^\sigma.
$$
In other words, $f^m$, where $m\colonequals|i-j|>0$, is fixed by $\sigma\in\Gal(k^s/k)$.

The integer $m$ may depend on $\sigma$. However, as $f$ is defined over some finite separable extension over $L$, it is fixed by some finite index subgroup $H\subset\Gal(k^s/k)$. Let $\sigma_1,\dots,\sigma_r\in\Gal(k^s/k)$ be representatives for the cosets. Then we can associate each $\sigma_i$ an integer $m_i$ as above so that $f^{m_i}$ is fixed by $\sigma_i$. Define $n\colonequals\prod_{i=1}^rm_i$. Then $f^n$ is fixed by each $\sigma_i$ and thus is fixed by $\Gal(k^s/k)$, so $f^n\in\Aut(X)$. This finishes the proof.
\end{proof}

%----------Jacobian fibrations and rational multisections
\subsection{Jacobian fibrations and rational multisections}
\label{subsect:jacobian_RatMultisection}

A main ingredient in Bogomolov and Tschinkel's \cite{BT00} proof of potential density is about the existence of a non-torsion rational multisection. In this section, we show that one can find such a multisection over a finite extension of bounded degree.

%-----Jacobian fibrations of elliptic K3 surfaces
\subsubsection{Jacobian fibrations of elliptic K3 surfaces}
\label{subsubsect:JacobEllipticK3}

Let us start with a brief review on the Jacobian fibrations of elliptic K3 surfaces. For the general foundations on torsors and their classifying spaces, we refer the reader to \cite{Poo17}*{Chapter~6} and \cite{Mil80}*{Chapter~III}. We also refer the reader to \cite{Huy16}*{\S11.4 and \S11.5} for more details of the following materials treated over algebraically closed fields.

Let $\pi\colon X\to\bP^1$ be an elliptic K3 surface over a perfect field $k$ and let $\eta$ denote the generic point of the base $\bP^1$. The generic fiber $X_\eta$ is a genus one curve over the function field $K\colonequals k(\bP^1)$, so we can talk about its Jacobian 
$$
    J(X_\eta)\colonequals\Pic^0_{X_\eta/K}.
$$
The Jacobian $J(X_\eta)$ has a natural structure of an elliptic curve as it contains the $K$-rational point $[\cO_{X_\eta}]$. On the other hand, it is also a surface over $k$ and thus admits a minimal model $J(X)$. The surface $J(X)$, called the \emph{Jacobian fibration} of $X$, is an elliptic K3 surface equipped with a section from the point $[\cO_{X_\eta}]$. In the following, an elliptic K3 surface with a section will be called a Jacobian fibration.

Given a Jacobian fibration $\pi\colon J\to\bP^1$, its \'etale local sections form a sheaf $\cJ$ of abelian groups on $\bP^1$. There is a natural bijection
\[
    H^1(\bP^1, \cJ)
    \longleftrightarrow\{
        \text{Elliptic K3 surfaces }X\to\bP^1\text{ with }J(X)\cong J
    \}/\sim
\]
where $X\sim X'$ if and only if they are isomorphic as elliptic fibrations. It is well known that the group $H^1(\bP^1, \cJ)$ is torsion. Hence, to every elliptic K3 surface $X$ with $J(X)\cong J$, we can assign a positive integer $\operatorname{ind}(X)$ called the \emph{index} of $X$, defined as the order of its class $[X]$ in $H^1(\bP^1, \cJ)$. Equivalently, the index of $X$ is the index of the image of the group homomorphism
\[\xymatrix@R=0pt{
    \Pic(X)\ar[r] & \bZ\\
    D\ar@{|->}[r] & D\cdot X_b
}\]
where $X_b$ is the fiber over a general $b\in\bP^1(\kbar)$. 

Given $[X],[Y]\in H^1(\bP^1,\cJ)$ such that $[X]=n[Y]$ for some positive integer $n$, there exists a commutative diagram of dominant $k$-rational maps of elliptic fibrations:
\begin{equation}
\label{eqn:existRatMap}
\vcenter{\xymatrix{
    Y \ar@{-->}[r] \ar@{-->}[dr] & X \ar@{-->}[d] \\
    & J.
}}
\end{equation}
Let $\ell$ and $m$ be the indices of $[X]$ and $[Y]$, respectively. Then $\ell = nm$, and over a point $b\in\bP^1(k)$ where the fibers are smooth, these maps act as
\[\xymatrix{
    Y_b\cong\Pic^1_{Y_b/k} \ar@{-->}[r] \ar@{-->}[dr] & X_b\cong\Pic^n_{Y_b/k} \ar@{-->}[d] \\
    & J_b\cong\Pic^\ell_{Y_b/k}.
}\]
For example, the downward arrow on the right maps $\mathcal{L}$ to $\mathcal{L}^{\otimes m}$.

The following lemma is well known to experts. We include a proof from the MathOverflow post \cite{MathOverflow} by Lior Bary-Soroker based on the \emph{embedding problems} (c.f. \cite{FJ08}*{Definition~16.4.1}).

\begin{lemma}
\label{lemma:hilbertianGalois}
Let $k$ be a Hilbertian field (e.g., the function field of $\bP^1$ over a number field) and $M$ a nonzero finite $\Gal(\overline{k}/k)$-module.
Then, $H^1(k,M)$ is infinite.
\end{lemma}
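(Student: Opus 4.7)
The plan is to reinterpret $H^1(k,M)$ in terms of lifts of a fixed Galois representation to a semidirect product, and then to invoke the theory of finite split embedding problems over Hilbertian fields.

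Since $M$ is a finite continuous $\Gal(\overline{k}/k)$-module, its action factors through a finite Galois quotient, so I would first fix a finite Galois extension $K/k$ such that $H\colonequals\Gal(K/k)$ acts on $M$ while $\Gal(\overline{k}/K)$ acts trivially. Let $\varphi\colon\Gal(\overline{k}/k)\twoheadrightarrow H$ denote the projection and form the finite semidirect product $\Gamma\colonequals M\rtimes H$. A direct computation shows that continuous $1$-cocycles $z\colon\Gal(\overline{k}/k)\to M$ are in natural bijection with continuous homomorphisms $\psi\colon\Gal(\overline{k}/k)\to\Gamma$ lifting $\varphi$, via $\psi(g)=(z(g),\varphi(g))$, and two cocycles are cohomologous if and only if the corresponding homomorphisms are conjugate by an element of $M\subset\Gamma$. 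It therefore suffices to exhibit infinitely many lifts $\psi$ that are pairwise non-conjugate by $M$.

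The pair $(\varphi,\Gamma\twoheadrightarrow H)$ is a finite split embedding problem over $k$ whose kernel is the nontrivial group $M$. The key step is then to apply the standard fact that over a Hilbertian field, every such embedding problem admits infinitely many proper solutions whose fixed fields in $\overline{k}$ are pairwise linearly disjoint over $K$ (see \cite{FJ08}*{Chapter~16}). Each such proper solution is a surjective continuous homomorphism $\psi_i\colon\Gal(\overline{k}/k)\twoheadrightarrow\Gamma$ lifting $\varphi$, and linear disjointness of the fixed fields forces the kernels $\ker(\psi_i)$ to be pairwise distinct subgroups of $\Gal(\overline{k}/k)$.

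Finally, since conjugation by an element of $M$ preserves the kernel of a homomorphism, distinct kernels produce distinct $M$-conjugacy classes of lifts and hence distinct classes in $H^1(k,M)$, giving infinitude. The main obstacle is pinning down the Hilbertian embedding statement in the exact form needed (split embedding problem with nontrivial finite kernel has infinitely many linearly disjoint proper solutions); once this is in hand, the remainder is routine cohomological bookkeeping and the nontriviality hypothesis on $M$ is used precisely to guarantee that the kernel of the embedding problem is nontrivial.
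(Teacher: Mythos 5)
Your proposal is correct. The cocycle--lift dictionary ($1$-cocycles $z\colon\Gal(\overline{k}/k)\to M$ correspond to continuous lifts $\psi$ of $\varphi$ to $M\rtimes H$, with cohomologous cocycles corresponding to $M$-conjugate lifts), the observation that conjugation by $M$ preserves kernels, and the deduction that pairwise linearly disjoint proper solutions have pairwise distinct kernels (since each fixed field properly contains $K$ when $M\neq 0$) are all sound. The paper's proof rests on the same engine --- proper solvability of finite split embedding problems over Hilbertian fields, citing \cite{FJ08}*{Proposition~16.4.5} --- but packages the two steps differently. To produce many classes, the paper solves the single \emph{larger} embedding problem $M^n\rtimes G\to G$ and projects onto the $n$ coordinate factors to obtain $n$ cocycles $c_1,\dots,c_n$, rather than invoking infinitely many independent solutions of $M\rtimes H\to H$; this sidesteps the one genuine loose end in your write-up, namely locating the exact statement ``infinitely many proper solutions with pairwise linearly disjoint fixed fields'' in the literature (that statement is true, but the cleanest way to prove it is precisely the $M^n\rtimes G$ trick, so the two arguments are nearly interchangeable at this point). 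To distinguish classes, the paper evaluates the cocycles on the open subgroup $U$ acting trivially on $M$, where every coboundary $g\mapsto gm-m$ vanishes, and exhibits $u\in U$ with $c_i(u)\neq c_j(u)$ using surjectivity of $\psi$; your kernel-based separation is an equally valid and arguably cleaner alternative, at the cost of needing the solutions to be proper (surjective) and independent, whereas the paper's evaluation argument needs only the single surjection onto $M^n\rtimes G$.
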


\begin{proof}
Let $G_k\colonequals\Gal(\overline{k}/k)$ and $U\subset G_k$ be an open subgroup such that the action on $M$ factors through $G:=G_k/U$. For each integer $n>0$, we consider the semidirect product of groups $$
    \Gamma_n\colonequals M^n\rtimes G
$$
where $G$ acts on $M^n$ by $g\cdot (m_1,\ldots,m_n)=(gm_1,\ldots,gm_n)$. Since the field $k$ is Hilbertian, there exists a surjective homomorphism $\psi\colon G_k\to \Gamma_n$ and a commutative diagram
$$\xymatrix{
    G_k \ar[dr]_-r\ar[r]^-\psi & \Gamma_n \ar[d]^-{p} \\
    & G
}$$
where $r\colon G_k\to G$ is the quotient map and $p\colon\Gamma_n\to G$ is the second projection (c.f. \cite{FJ08}*{Proposition~16.4.5}).

For each $i=1,\dots,n$, we define
$$
    \pi_i\colon \Gamma_n\longrightarrow \Gamma_1
    : ((m_1,\ldots,m_n),g)\mapsto (m_i,g)
$$
and let $\psi_i=\pi_i\circ \psi$. Since $M$ is nontrivial, it contains at least two distinct elements $m$ and $m'$. For any $i\neq j$, let $\gamma_{ij}\in M^n$ be an element whose $i$-th entry is $m$ and $j$-th entry is $m'$. By the surjectivity of $\psi$, there exists $g\in G_k$ such that $\psi(g)=(\gamma_{ij},0)\in \Gamma_n$. Since $r(g)=p\circ \psi(g)=0$, it follows that $g\in U$. Notice that $\psi_i(g) = (m,0)\neq (m',0) = \psi_j(g)$. This shows that, for any $i\neq j$, there exists $u\in U$ such that $\psi_i(u)\neq \psi_j(u)$.

For each $1\leq i\leq n$, define $c_i\colon G_K\to M$ to be the composition
$$\xymatrix{
    c_i\colon G_k \ar[r]^-{\psi_i} & \Gamma_1 = M\rtimes G\ar[r]^-{q} & M.
}$$
where $q$ is the projection to $M$. We claim that each $c_i$ is a $1$-cocycle, that is, $c_i(gh)=c_i(g)+gc_i(h)$ for any $g,h\in G_k$. For the sake of simplicity, we denote the images of $g$ and $h$ under $r\colon G_k\to G$ as $\overline{g}$ and $\overline{h}$, respectively. Since $\psi_i$ is a homomorphism,
$$
    q(\psi_i(g)\psi_i(h))
    = q((c_i(g),\overline{g})(c_i(h),\overline{h}))
    = q(c_i(g)+\overline{g}c_i(h),\overline{gh})
    = c_i(g)+\overline{g}c_i(h)
$$
which verifies the claim. Next we show that $c_i$ and $c_j$ are distinct modulo $1$-coboundaries for any $i\neq j$. Recall that there exists $u\in U$ such that $\psi_i(u)\neq\psi_j(u)$. This implies that $c_i(u)\neq c_j(u)$ as $\psi_\ell(u) = (c_\ell(u),0)$ for all $\ell$. Since $d(u) = 0$ every $1$-coboundary $d\colon G_k\to M$ and $u\in U$, the claim follows. We have shown that each $c_i$ gives a distinct class in $H^1(k,M)$, which shows this group is infinite by letting $n\to\infty$.
\end{proof}

\begin{lemma}
\label{lemma:infiniteTorsion}
Let $\pi\colon J\to\bP^1$ be a Jacobian fibration over a number field $k$ and let $\cJ$ be the associated sheaf on $\bP^1$. Then $H^1(\bP^1,\cJ)[n]$ contains infinitely many elements for any $n>1$.
\end{lemma}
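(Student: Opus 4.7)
The plan is to reduce to the proof strategy of Proposition~\ref{prop:noUPD_genus-one-curve} by passing to the generic fiber. Let $K\colonequals k(\bP^1)$ and let $E\colonequals J_\eta$ be the generic fiber of $\pi$, viewed as an elliptic curve over $K$. The approach is first to show that $H^1(K,E)[n]$ is infinite, and then to transfer this infiniteness to $H^1(\bP^1,\cJ)[n]$ via the bijection recalled at the beginning of \S\ref{subsubsect:JacobEllipticK3}.

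The first step proceeds exactly as in the proof of Proposition~\ref{prop:noUPD_genus-one-curve}. The Kummer sequence
$$
    0\longrightarrow E[n]\longrightarrow E\xrightarrow{\,\times n\,}E\longrightarrow 0
$$
of $\Gal(\overline{K}/K)$-modules yields
$$
    0\longrightarrow E(K)/nE(K)\longrightarrow H^1(K,E[n])\longrightarrow H^1(K,E)[n]\longrightarrow 0.
$$
Because $J$ is a K3 surface, the fibration $\pi$ is non-split (a split elliptic surface $E_0\times\bP^1$ is ruled, hence of Kodaira dimension $-\infty$), so $E(K)$ is finitely generated by the Mordell--Weil theorem for function fields \cite{Sil94}*{III, Theorem~6.1}; in particular $E(K)/nE(K)$ is finite. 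The field $K$ is Hilbertian since it is the function field of a curve over a number field, and the $\Gal(\overline{K}/K)$-module $E[n]$ is finite and nonzero because $n>1$, so Lemma~\ref{lemma:hilbertianGalois} implies that $H^1(K,E[n])$ is infinite. Combined with the exact sequence above this forces $H^1(K,E)[n]$ to be infinite.

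To transfer this to $H^1(\bP^1,\cJ)[n]$, I would represent an arbitrary class of $H^1(K,E)[n]$ by a genus-one curve $C/K$ with Jacobian $E$ and index dividing $n$, and take its minimal regular proper model $X_C\to\bP^1$. The surface $X_C$ is an elliptic surface sharing the same Kodaira types of singular fibers as $J\to\bP^1$; comparing Euler characteristics via \eqref{eqn:euler_ellipticSurf} yields $\chi(X_C)=\chi(J)=24$, and a standard calculation with the canonical bundle formula then shows that $X_C$ is again an elliptic K3 surface with $J(X_C)\cong J$. Under the bijection of \S\ref{subsubsect:JacobEllipticK3} this provides the desired injection $H^1(K,E)[n]\hookrightarrow H^1(\bP^1,\cJ)[n]$, completing the proof.

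The main obstacle lies in this final step. One has to verify that every Weil--Ch\^atelet class of order dividing $n$ is realized by an elliptic K3 surface (not some other elliptic surface) with the prescribed Jacobian, matching the local contributions at each singular fiber and checking both $\chi(\cO_{X_C})=2$ and $\omega_{X_C}\cong\cO_{X_C}$. Most of this bookkeeping can be extracted from Kodaira's classification of singular fibers, but this is where the bulk of the technical verification lies.
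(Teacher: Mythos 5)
Your first step is fine and is essentially Proposition~\ref{prop:noUPD_genus-one-curve} transplanted to the function field $K=k(\bP^1)$: Hilbertianity of $K$ (Lemma~\ref{lemma:hilbertianGalois}) together with the Mordell--Weil theorem and the Kummer sequence does show that $H^1(K,J_\eta)[n]$ is infinite. The gap is in the transfer step, and it is not mere bookkeeping: the natural map between the two groups goes the wrong way. Restriction to the generic fiber gives an \emph{injection} $H^1(\bP^1,\cJ)\hookrightarrow H^1(K,J_\eta)$ whose image is, by Ogg--Shafarevich theory, the subgroup of classes that are locally trivial at every closed point of $\bP^1$; there is no injection in the direction you need. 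Concretely, a torsor $C/K$ of order dividing $n$ that is \emph{not} locally trivial at some place $v$ has minimal regular proper model $X_C\to\bP^1$ with a multiple fiber over $v$. Such a surface does not share the Kodaira fiber types of $J$, and the canonical bundle formula then forces $\omega_{X_C}\not\cong\cO_{X_C}$, so $X_C$ is not a K3 surface and its class does not lie in $H^1(\bP^1,\cJ)$. (The Euler characteristic computation $\chi(X_C)=24$ survives, but that is not enough.) To salvage your route you would have to produce infinitely many $n$-torsion classes in the everywhere-locally-trivial subgroup, which is a genuinely stronger statement than the infinitude of $H^1(K,J_\eta)[n]$.

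The paper avoids this issue by never leaving $H^1(\bP^1,\cJ)$: it chooses a finite Galois extension $K'/\kbar(\eta)$ trivializing $J_\eta[n]$, sets $G=\Gal(K'/k(\eta))$, and uses the Hochschild--Serre spectral sequence for the corresponding cover $C_{\kbar}\to\bP^1_{\kbar}$ to obtain an injection $H^1(G,J_\eta(K'))[n]\hookrightarrow H^1(\bP^1,\cJ)[n]$, so the classes produced automatically live in the correct group. The Hilbertian/Kummer argument is then run at the level of the finite groups $G$: the infinitude of $H^1(k(\eta),J_\eta[n])$ from Lemma~\ref{lemma:hilbertianGalois} makes $H^1(G,J_\eta(K')[n])$ unbounded as $K'$ grows, and the finiteness of $J_\eta(k(\eta))/nJ_\eta(k(\eta))$ transfers this to $H^1(G,J_\eta(K'))[n]$. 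You should redirect your argument along these lines.
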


\begin{proof}
Let $\eta$ denote the generic point of $\bP^1$. Let $K/\kbar(\eta)$ be a finite Galois extension such that $J_\eta(K)[n]=J_\eta(\overline{K})[n]\isom (\bZ/n\bZ)^2$ is the full $n$-torsion subgroup. Let $C_{\kbar}\to\bP^1_{\kbar}$ be the Galois cover of smooth projective curves over $\kbar$ corresponding to $K/\kbar(\eta)$. Denote by $G$ the Galois group $\Gal(K/k(\eta))$. The Hochschild--Serre spectral sequence
$$
    E^{p,q}_2=H^p(G, H^q(C_{\kbar},\cJ))\implies H^{p+q}(\bP^1,\cJ)
$$
and the isomorphism $H^0(C_{\kbar},\cJ)\isom J_{\eta}(K)$ give an injection
$
    H^1(G,J_{\eta}(K))\injects H^1(\bP^1,\cJ),
$
which restricts to an injection
\begin{equation}
\label{eqn:injectTorsions}
    H^1(G,J_{\eta}(K))[n]\injects H^1(\bP^1,\cJ)[n].
\end{equation}

Consider $M:=J_\eta(K)$ as a $G$-module. The Kummer sequence
$$\xymatrix{
    0\ar[r]
    & M[n]\ar[r]
    & M\ar[r]^-{\times n}
    & M\ar[r]
    & 0
}$$
induces the exact sequence
$$
    0\longrightarrow
    \frac{J_{\eta}(k(\eta))}{nJ_{\eta}(k(\eta))}\longrightarrow
    H^1(G,M[n])\longrightarrow
    H^1(G,M)[n]\longrightarrow
    0.
$$
The quotient $J_{\eta}(k(\eta))/nJ_{\eta}(k(\eta))$ is finite by the Mordell--Weil theorem over function fields. By Lemma~\ref{lemma:hilbertianGalois}, the group $H^1(k(\eta),M[n])$ is infinite. Hence, by enlarging $K$, we can make $H^1(G,M[n])$ arbitrarily large. (Note that $M[n]$ does not change upon enlarging $K$.)
It follows then that $H^1(G,M)[n]$ can also be made arbitrarily large, and thus $H^1(\bP^1,\cJ)[n]$ must be infinite by (\ref{eqn:injectTorsions}).
\end{proof}

%Group theory lemma
\begin{lemma}
\label{lemma:groupTheory}
Let $G$ be an abelian group that contains an element of prime order $p$. Then, for every $x\in G$ whose order is finite and not divisible by $p$,
there exists $z\in G$ such that $pz = x$ and $p\mid\ord(z)$.
\end{lemma}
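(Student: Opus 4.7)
The plan is straightforward once one observes that the hypothesis gives us a free torsion element to exploit. First, I would use coprimality of $p$ and $\ord(x)$ to produce some preimage of $x$ under multiplication by $p$, then perturb that preimage by the given $p$-torsion element to force $p$ into the order.

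More concretely, let $n\colonequals\ord(x)$, so $\gcd(p,n)=1$ by hypothesis. Choose integers $a,b\in\bZ$ with $ap+bn=1$ and set $z_0\colonequals ax\in G$. Then $pz_0=apx=(1-bn)x=x$, since $nx=0$. Moreover, $nz_0=a(nx)=0$, so $\ord(z_0)$ divides $n$ and is therefore coprime to $p$. So I have a preimage of $x$ of the ``wrong'' order, and the task is to adjust it.

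Next, let $y\in G$ be any element of order $p$ (which exists by assumption) and set
\[
    z\colonequals z_0+y.
\]
Then $pz=pz_0+py=x+0=x$, which verifies the first required property. For the second, I would invoke the standard fact that in an abelian group, if two elements commute and have coprime orders, the order of their sum is the product of their orders. Applied to $z_0$ (of order coprime to $p$) and $y$ (of order exactly $p$), this gives $\ord(z)=p\cdot\ord(z_0)$, and in particular $p\mid\ord(z)$.

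There is no real obstacle here; the only subtle point is making sure the perturbation does not destroy the property $pz=x$, which is why we add a $p$-torsion element rather than anything else, and verifying the order formula for sums of elements of coprime orders (an elementary consequence of decomposing a finite cyclic group into its Sylow components, applied inside $\langle z_0\rangle+\langle y\rangle$).
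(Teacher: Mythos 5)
Your proof is correct, and it streamlines the paper's argument. The paper also starts from a B\'ezout relation $\alpha p+\beta d=1$ and also perturbs by a $p$-torsion element $y$, but it defines $z\colonequals\alpha(x+y)$, which forces it to first arrange $\beta$ so that $\beta d\not\equiv 1\pmod{p^2}$ (hence $p\nmid\alpha$, so that $\alpha y\neq 0$), and then to verify $p\mid\ord(z)$ by a contradiction argument ($\ord(z)\mid d$ would give $d\alpha y=0$). By instead setting $z\colonequals ax+y$ you sidestep the mod-$p^2$ adjustment entirely, and you replace the contradiction argument by the standard fact that the sum of two elements of coprime finite orders in an abelian group has order equal to the product of the orders (which holds because $\langle z_0\rangle\cap\langle y\rangle=\{0\}$ when the orders are coprime). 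Both proofs are valid; yours is shorter and makes the mechanism --- take any preimage of $x$ under multiplication by $p$, which necessarily has order prime to $p$, then shift it by a $p$-torsion element --- more transparent, at the modest cost of invoking the coprime-order sum formula as a black box.
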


\begin{proof}
Let $d\colonequals\ord(x)$.
First we show that there exists $\beta\in\bZ$ such that
\begin{equation}
\label{eqn:betaModp}
    \beta d\equiv 1\;(\text{mod }p)
    \quad\text{and}\quad
    \beta d\not\equiv 1\;(\text{mod }p^2).
\end{equation}
There exists $\beta$ satisfying the first equation since $p$ and $d$ are coprime.
If $\beta d\not\equiv 1\;(\text{mod }p^2)$, then we are done.
If $\beta d\equiv 1\;(\text{mod }p^2)$, then
\[
    (\beta+p)d
    \equiv 1+pd
    \not\equiv 1\;(\text{mod }p^2),
\]
where the second relation uses the assumption $p\nmid d$.
Note that $(\beta+p)d\equiv 1\;(\text{mod }p)$.
Therefore, we can replace $\beta$ by $\beta+p$ to fulfill \eqref{eqn:betaModp}.

Let $\alpha\in\bZ$ be such that $\alpha p+\beta d=1$. Note that by \eqref{eqn:betaModp}, we have $p\nmid\alpha$. Pick an element $y\in G$ of order $p$ and define $z\colonequals \alpha(x+y)$. Then
$$
    pz = p\alpha (x+y) = p\alpha x = (1-\beta d)x = x - \beta dx = x.
$$
It follows that $\ord(z)\mid pd$.
We claim that $\ord(z)\nmid d$. Indeed, if $\ord(z)$ divides $d$, then
\[
    0 = dz = d\alpha(x+y) = d\alpha y,
\]
which implies that $p\mid d\alpha$ and thus $p\mid\alpha$, contradiction. Therefore, we have $p\mid\ord(z)$.
\end{proof}

\begin{prop}
\label{prop:divideClass}
Let $\pi\colon J\to\bP^1$ be a Jacobian fibration over a number field $k$. For every $[X]\in H^1(\bP^1,\cJ)$ and every prime $p\nmid\operatorname{ind}(X)$, there exists $[Y]\in H^1(\bP^1,\cJ)$ such that $[X]=p[Y]$ and $p\mid\operatorname{ind}(Y)$.
\end{prop}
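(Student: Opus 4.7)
The plan is to deduce this proposition directly by combining the two preceding lemmas, so the conceptual work is already done and what remains is a short assembly. The proposition is really a statement about the abstract abelian group $G \colonequals H^1(\bP^1, \cJ)$: given $x \colonequals [X] \in G$ of finite order $d = \operatorname{ind}(X)$ coprime to $p$, produce $z \in G$ with $pz = x$ and $p \mid \ord(z)$.

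First I would recall that $G = H^1(\bP^1,\cJ)$ is torsion (this was noted just before Lemma~\ref{lemma:infiniteTorsion}), so in particular $\ord([X]) = \operatorname{ind}(X) = d$ is finite. Then I would invoke Lemma~\ref{lemma:infiniteTorsion} with $n = p$: the group $G[p]$ is infinite, hence nonzero, so $G$ contains an element of order exactly $p$ (any nonzero element of $G[p]$ has order $p$ since $p$ is prime). This verifies the hypothesis of Lemma~\ref{lemma:groupTheory}.

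Now I would apply Lemma~\ref{lemma:groupTheory} to the abelian group $G$, the prime $p$, and the element $x = [X]$, which has finite order $d$ with $p \nmid d$. The lemma produces $z \in G$ with $pz = [X]$ and $p \mid \ord(z)$. Taking $[Y] \colonequals z$, the identity $\operatorname{ind}(Y) = \ord([Y])$ gives $[X] = p[Y]$ and $p \mid \operatorname{ind}(Y)$, as required.

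The only substantive input is Lemma~\ref{lemma:infiniteTorsion}, which supplied the indispensable order-$p$ element needed to run Lemma~\ref{lemma:groupTheory}; everything else is bookkeeping between the two notions of \emph{order in $H^1$} and \emph{index of an elliptic K3 surface}. There is no real obstacle beyond making sure Lemma~\ref{lemma:groupTheory} is being applied in the correct generality, which is immediate since $H^1(\bP^1,\cJ)$ is a torsion abelian group.
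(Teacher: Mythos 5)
Your proposal is correct and matches the paper's proof exactly: both invoke Lemma~\ref{lemma:infiniteTorsion} to obtain an element of order $p$ in $H^1(\bP^1,\cJ)$ and then apply Lemma~\ref{lemma:groupTheory} with $G = H^1(\bP^1,\cJ)$, using that $\operatorname{ind}(X)$ is by definition the order of $[X]$ in this group.
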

\begin{proof}
By Lemma~\ref{lemma:infiniteTorsion}, the group $H^1(\bP^1,\cJ)[p']$ is infinite for every prime $p'$. In particular, for every prime $p$ as in the statement, $H^1(\bP^1,\cJ)$ contains an element of prime order $p$. Now apply Lemma~\ref{lemma:groupTheory} with $G = H^1(\bP^1,\cJ)$ to get the desired $Y$.
\end{proof}

%-----Existence of non-torsion rational multisections
\subsubsection{Existence of non-torsion rational multisections}
\label{subsubsect:nontorsionRatmultisection}

Let $\pi\colon X\to\bP^1$ be an elliptic K3 surface over a perfect field $k$ and let $\cJ$ be the associated Jacobian sheaf. Following \cite{BT00}*{Definition~3.7}, we say that a multisection $\cM\subset X$ is \emph{torsion of order $m$} or \emph{m-torsion} if $m$ is the smallest positive integer such that for every $b\in\bP^1(\kbar)$ and $p,q\in\cM\cap X_b(\kbar)$, the class $[p]-[q]$ lies in $\cJ_b[m]$.

We call $\cM$ \emph{non-torsion} or an \emph{nt-multisection} if there exists $b\in\bP^1(\kbar)$ such that $[p]-[q]\in\cJ_b$ is torsion-free for some $p,q\in\cM\cap X_b(\kbar)$. Recall that there exists such a $b\in\bP^1(\kbar)$ if and only if the statement holds for a general point in $\bP^1(\kbar)$ by \cite{BT00}*{Lemma~3.8}.

\begin{lemma}
\label{lemma:torsion-torsion}
Pick any $[X],[Y]\in H^1(\bP^1,\cJ)$ that satisfy $[X] = n[Y]$ and let $\varphi\colon Y\dashrightarrow X$ be the map given in \eqref{eqn:existRatMap}. Then a multisection $\cM\subset Y$ is non-torsion if and only if its image $\varphi(\cM)\subset X$ is non-torsion.
\end{lemma}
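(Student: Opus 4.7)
The plan is to compute how differences of points in the fiber transform under the map $\varphi$, and then observe that the torsion-freeness of a point on an elliptic curve is preserved and reflected by multiplication by $n$.

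First I would specialize to a general point $b\in\bP^1(\kbar)$ where the fibers $Y_b$, $X_b$, and $J_b$ are all smooth, where $\cM$ meets $Y_b$ transversally in $\deg(\cM/\bP^1)$ distinct points, and where $\varphi$ is regular on $\cM\cap Y_b$. Under the identifications $Y_b\cong\Pic^1_{Y_b/\kbar}$ and $X_b\cong\Pic^n_{Y_b/\kbar}$ recalled after \eqref{eqn:existRatMap}, the restriction $\varphi|_{Y_b}$ is the map $\mathcal{L}\mapsto \mathcal{L}^{\otimes n}$. Consequently, for any $p,q\in Y_b(\kbar)$ one has the key identity
\[
    [\varphi(p)]-[\varphi(q)]
    \;=\;
    n\bigl([p]-[q]\bigr)
    \quad\text{in}\quad
    \cJ_b.
\]
Note that $\varphi(\cM)$ is indeed a multisection of $X\to\bP^1$, since $\cM$ dominates $\bP^1$ and $\varphi$ is a dominant map of elliptic fibrations; moreover, for general $b$ the set $\varphi(\cM)\cap X_b(\kbar)$ is the image of $\cM\cap Y_b(\kbar)$ under $\varphi|_{Y_b}$.

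Next I would use that multiplication by $n$ on the elliptic curve $\cJ_b$ is an isogeny, hence has finite kernel. This implies that for any $x\in\cJ_b(\kbar)$, the element $x$ has infinite order if and only if $nx$ has infinite order: indeed, if $mx=0$ then $m(nx)=0$, while if $m(nx)=0$ then $mnx=0$, so finite order of one forces finite order of the other. Combined with the displayed identity, this shows that $[p]-[q]\in\cJ_b$ is torsion-free if and only if $[\varphi(p)]-[\varphi(q)]\in\cJ_b$ is torsion-free.

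Finally I would package the two directions. If $\cM$ is non-torsion, \cite{BT00}*{Lemma~3.8} lets us pick a general $b$ and $p,q\in\cM\cap Y_b(\kbar)$ with $[p]-[q]$ torsion-free; then $[\varphi(p)]-[\varphi(q)]=n([p]-[q])$ is torsion-free, so $\varphi(\cM)$ is non-torsion. Conversely, if $\varphi(\cM)$ is non-torsion, any general $b$ and $p',q'\in\varphi(\cM)\cap X_b(\kbar)$ witnessing the torsion-freeness lift to $p,q\in\cM\cap Y_b(\kbar)$ with $\varphi(p)=p'$, $\varphi(q)=q'$, and the same identity shows that $[p]-[q]$ must also be torsion-free. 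The whole argument is essentially a translation of the definition through the fiberwise formula for $\varphi$; the only potential subtlety, avoiding bad fibers and the indeterminacy locus of $\varphi$, is handled by restricting to a general $b\in\bP^1(\kbar)$ as allowed by \cite{BT00}*{Lemma~3.8}.
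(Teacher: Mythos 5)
Your proof is correct and follows essentially the same route as the paper: both arguments reduce to the fiberwise identification of $\varphi|_{Y_b}$ with multiplication by $n$ on $\cJ_b$ (the paper phrases this by choosing $p$ and $\varphi(p)$ as origins, you phrase it via the identity $[\varphi(p)]-[\varphi(q)]=n([p]-[q])$), together with the fact that $[n]$ preserves and reflects infinite order. The paper dispatches the ``only if'' direction as immediate and spells out the ``if'' direction; your version just makes both directions symmetric, which is a fine presentational choice.
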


\begin{proof}
It is easy to see that $\cM$ is torsion implies that $\varphi(\cM)$ is torsion. Let us assume that $\cM$ is non-torsion. Then there exist $b\in\bP^1(\kbar)$ and $p,q\in Y_b(\kbar)$ such that $Y_b$ is smooth and $[p]-[q]\in\Pic^0(Y_b)$ is non-torsion.
Choose $p$ and $\varphi(p)$ as origins for $Y_b$ and $X_b$, respectively. We can identify $\varphi|_{Y_b}$ as the isogeny $[n]\colon\cJ_b\to\cJ_b$, where $[n]$ is the multiplication by $n$.
Under this identification, $q$ is non-torsion, and thus $\varphi(q)$ is non-torsion. It follows that $[\varphi(q)]-[\varphi(p)]\in\Pic^0(X_b)$ is non-torsion, so $\varphi(\cM)\subset X$ is non-torsion.
\end{proof}

Given a relatively minimal Jacobian fibration $J\to\bP^1$, we consider the invariant $c(J)$ defined by Hassett \cite{Has03}*{Corollary~9.5}. Suppose that, with respect to Kodaira's table of singular fibers, $J$ has $n_0$ fibers of type~${\rm I}_0^*$, $n_1'$ fibers of type~${\rm I}_a$ with $a>0$, $n_1''$ fibers of type~${\rm I}_a^*$ with $a>0$, $n_2$ fibers of type~${\rm II}$ or ${\rm II}^*$, $n_3$ fibers of type~${\rm III}$ or ${\rm III}^*$, and $n_4$ fibers of types~${\rm IV}$ or ${\rm IV}^*$. Then $c(J)$ is defined by
\begin{equation}
\label{eqn:express_c(J)}
    c(J)
    = \frac{1}{2}n_0 + n_1' + n_1''
    + \frac{5}{6}n_2 + \frac{3}{4}n_3 +\frac{2}{3}n_4
    - 2.
\end{equation}

\begin{lemma}
\label{lemma:genusMultisection}
Let $X$ be an elliptic K3 surface over a number field $k$ with associated Jacobian fibration $J$ and Jacobian sheaf $\cJ$. Suppose that $c(J) > 0$. Then there exists a prime $p$ and a class $[Y]\in H^1(\bP^1,\cJ)$ such that $[X] = p[Y]$ and every rational multisection on $Y$ is non-torsion.
\end{lemma}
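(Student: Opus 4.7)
The plan is to apply Proposition~\ref{prop:divideClass} with a large prime $p\nmid\operatorname{ind}(X)$, which produces a class $[Y]\in H^1(\bP^1,\cJ)$ with $[X]=p[Y]$ and $p\mid\operatorname{ind}(Y)$, and then to use Hassett's invariant $c(J)>0$ to rule out rational torsion multisections on the resulting twist $Y$ of $J$ for $p$ beyond an explicit threshold. The key feature of $Y$ produced this way is that $p\mid\operatorname{ind}(Y)$, which forces every multisection of $Y$ to have degree divisible by $p$ (as $\operatorname{ind}(Y)$ divides the degree over $\bP^1$ of any multisection). So if $M\subset Y$ is any rational multisection of degree $d$, then $p\mid d$.

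Suppose moreover that $M$ is $m$-torsion. Then the $m$-torsion condition lets us quotient by the fiberwise translation action of the subgroup $H\subset\cJ_b[m]$ of order $d$ generated by differences of points in $M\cap Y_b$. This quotient yields a section $\sigma\subset J$ and exhibits $M\to\sigma\cong\bP^1$ as a degree-$d$ cover of rational curves. Applying Riemann--Hurwitz to $M\cong\bP^1$ produces a ramification divisor of degree $2d-2$ supported above the singular fibers of $J$, where the $H$-action can have fixed points. Hassett's invariant $c(J)$, defined by \eqref{eqn:express_c(J)}, quantifies precisely the available ramification at each singular fiber type; under the hypothesis $c(J)>0$, \cite{Has03}*{Corollary~9.5} translates this into an upper bound $d\leq D$ where $D=D(c(J))$ depends only on the singular-fiber configuration of $J$ and not on $p$ or $m$.

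Combining $p\mid d$ with $d\leq D$ gives $p\leq D$. Hence for any prime $p>D$ with $p\nmid\operatorname{ind}(X)$ (guaranteed by Dirichlet's theorem), the resulting $Y$ admits no rational torsion multisection, and the lemma follows. The main obstacle is the Riemann--Hurwitz quantification in the previous paragraph: we must carefully account for the ramification contributions of each Kodaira fiber type, and we must also handle the fact that the singular fibers of a twist $Y$ of $J$ can differ from those of $J$ itself (for instance, through multiple fibers arising in the twisting construction), so that Hassett's bound transfers correctly to the twisted setting. It is this transfer that allows one to express $D$ purely in terms of $c(J)$, giving the uniform threshold $p>D$.
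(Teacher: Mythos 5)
Your opening step (invoking Proposition~\ref{prop:divideClass} for a large prime $p\nmid\operatorname{ind}(X)$) is exactly how the paper begins, but from there the paper takes a different route: it passes to $Y'\colonequals d[Y]$ with $\operatorname{ind}(Y')=p$, uses Lemma~\ref{lemma:torsion-torsion} and \cite{BT00}*{Corollary~3.10} to show that a torsion rational multisection of $Y$ would have to surject onto a $p$-torsion multisection of $Y'$ or of $J$ minus the zero section, and then quotes \cite{Has03}*{Corollaries~8.3, 9.5 and Theorem~9.9} to conclude that for $p\gg0$ these $p$-torsion multisections have positive genus, which a rational curve cannot dominate. Your plan instead tries to bound the degree $d$ of the torsion multisection $M$ itself by Riemann--Hurwitz, and the gaps sit precisely in the steps you would need to replace that citation.

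Concretely: (1) the divisibility $\operatorname{ind}(Y)\mid\deg(M)$ holds only when the class of $M$ lies in $\Pic(Y)$ over $k$, whereas the lemma is applied in Proposition~\ref{prop:ntRatMultisection} to a multisection defined only over an extension $L/k$, where the index can drop (repairable via restriction--corestriction when $p>[L:k]$, but you do not address it). (2) The quotient construction is not correct as stated: the subgroup of $\cJ_b[m]$ generated by differences of points of $M\cap Y_b$ need not have order $d$, the set $M\cap Y_b$ need not be a full coset of it, and the fiberwise quotient of the torsor $Y$ by such a subsheaf is not $J$; what you are implicitly asserting here is essentially \cite{BT00}*{Corollary~3.10}, which genuinely uses the hypothesis $\operatorname{ind}(Y')=p$. (3) Most seriously, Riemann--Hurwitz for $M\to\bP^1$ gives the identity $\deg R=2d-2$, and to extract $d\leq D(c(J))$ you need an \emph{upper} bound on the ramification over each singular fiber of type $T$ of the form $(1-\lambda_T)d+O(1)$ with $\sum_T\lambda_T n_T=c(J)+2>2$, i.e.\ a lower bound on the number of local monodromy orbits on $M\cap Y_b$. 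Hassett's computation gives this for the full (or exact-order-$p$) torsion multisections, but for an arbitrary irreducible torsion multisection the points of $M\cap Y_b$ can concentrate on monodromy-fixed torsion points over some singular fibers, so the orbit bound fails without the ``exact order $p$'' structure that \cite{BT00}*{Corollary~3.10} supplies. You flag this transfer as ``the main obstacle,'' but it is the entire content of the lemma, so the proposal does not close the argument.
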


\begin{proof}
By Proposition~\ref{prop:divideClass}, for every prime $p\nmid\operatorname{ind}(X)$, there exists $[Y]\in H^1(\bP^1,\cJ)$ such that $[X]=p[Y]$ and $\operatorname{ind}(Y) = pd$ for some $d > 0$. Let $[Y'] = d[Y]\in H^1(\bP^1,\cJ)$. Then $\operatorname{ind}(Y')=p$ and we have rational maps between elliptic fibrations:
\[\xymatrix{
    Y\ar@{-->}[r]^-{\varphi} & Y'\ar@{-->}[r] & J.
}\]
Let $\cM\subset Y$ be a rational multisection and suppose, to the contrary, that it is torsion. Then the multisection $\varphi(\cM)\subset Y'$ is torsion by Lemma~\ref{lemma:torsion-torsion}. Corollary~3.10 of \cite{BT00} then implies that $\varphi(\cM)$ admits a surjection onto one of the $p$-torsion multisections either on $Y'$ or $J-\{\text{zero section}\}$. However, the irreducible components of these $p$-torsion multisections have positive genus provided that $p\gg 0$ due to $c(J)>0$ and \cite{Has03}*{Corollaries~8.3, 9.5, and Theorem~9.9}. Hence $\varphi(\cM)$, and thus $\cM$, is non-torsion if $p\gg 0$.
\end{proof}

\iffalse

\begin{lemma}
Let $X$ be an elliptic K3 surface over a number field $k$ with $\rho(X_{\kbar})\leq 19$ and let $\cJ$ be the associated Jacobian sheaf. Then there exists $[Y]\in H^1(\bP^1,\cJ)$ such that $[X] = p[Y]$ for some prime $p$ and every rational multisection on $Y$ is non-torsion.
\end{lemma}

\begin{proof}
From \cite{BT00}*{Proposition~3.25} and \cite{BT00}*{Lemma~3.26}, one can conclude that there exists a prime $p\nmid \operatorname{ind}(X)$ such that if $[Y]\in H^1(\bP^1,\cJ)$ satisfies $p[Y]=[X]$ and $p\mid\ord([Y])$, then every rational multisection of $Y$ is non-torsion. By Proposition~\ref{prop:divideClass}, such $Y$ exists.
\end{proof}

\fi

\begin{lemma}
\label{lemma:existenceRatMult}
Let $X$ be an elliptic K3 surface over a number field $k$. Then there exists a finite extension $L/k$ of degree at most $48^2\mathbf{c}_{20}$ such that there is a rational multisection $\cM\subset X_L$ defined over $L$.
\end{lemma}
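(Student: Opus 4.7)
The idea is to combine Lemma~\ref{lemma:defineNeronSeveri} with a second application of Lemma~\ref{lemma:ELS_ellipticSurface}, applied to a second elliptic fibration on $X$ whose existence is established by lattice theory on $\Pic(X_{k^s})$. The bound $48^2\mathbf{c}_{20}$ then decomposes naturally as $(48\mathbf{c}_{20})\cdot 48$.

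First, by Lemma~\ref{lemma:defineNeronSeveri} I would pass to an extension $L_1/k$ of degree at most $48\mathbf{c}_{20}$ with $\Pic(X_{L_1})=\Pic(X_{k^s})$, and denote by $F\in\Pic(X_{L_1})$ the class of a fiber of the given elliptic fibration $\pi\colon X\to\bP^1$. Using that $\Pic(X_{k^s})$ is an even lattice of signature $(1,\rho-1)$ with $2\leq\rho\leq 20$, I would next produce a primitive nef isotropic class $E\in\Pic(X_{L_1})$ satisfying $E\cdot F>0$ by lattice-theoretic means; reflections by $(-2)$-classes of $\Pic(X_{L_1})$ can be used to bring a suitable isotropic vector into the nef cone. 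By the Pyatetski--Shapiro--Shafarevich criterion for elliptic fibrations on K3 surfaces, $E$ is the fiber class of a second elliptic fibration $\pi'\colon X_{L_1}\to\bP^1_{L_1}$, and the condition $E\cdot F>0$ implies that every fiber of $\pi'$ is a multisection of $\pi$.

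Next I would apply Lemma~\ref{lemma:ELS_ellipticSurface} to $\pi'$. Since $X_{L_1}$ is a K3 surface with $\chi(X_{L_1})=24$, this produces an extension $L/L_1$ with $[L:L_1]\leq 2\chi(X)=48$ together with a geometrically irreducible reduced component $\cM$ of a singular fiber of $\pi'$ that is birational to $\bP^1_L$ over $L$. By Kodaira's classification, every irreducible component of a singular fiber of a relatively minimal elliptic fibration is rational, so $\cM$ is a rational curve over $L$, and it is a multisection of $\pi$ provided $\cM\cdot F>0$. The latter can be arranged by choosing $E$ so that $\pi$ and $\pi'$ share no common fiber components, or if necessary by replacing $\cM$ with another component of the same singular fiber. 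Multiplying degree bounds then yields $[L:k]\leq 48\cdot 48\mathbf{c}_{20}=48^2\mathbf{c}_{20}$.

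The main obstacle is the lattice-theoretic production of the class $E$: existence of a primitive nef isotropic class $E\neq F$ in $\Pic(X_{k^s})$ is automatic when $\Pic(X_{k^s})$ contains a hyperbolic plane summand, but is more delicate for small $\rho$, and the $\rho=2$ case may require an alternative construction (for instance, exhibiting a $(-2)$-curve class directly via Riemann--Roch applied to a carefully chosen divisor on $X_{k^s}$, combined with the $(-2)$-class uniqueness $h^0(\cO_X(\cM))=1$ to automatically descend $\cM$ over $L_1$). Verifying that the final component $\cM$ is actually a multisection rather than a curve vertical for both fibrations is a secondary but more tractable point.
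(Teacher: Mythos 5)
Your outer architecture matches the paper's: descend the full Picard group via Lemma~\ref{lemma:defineNeronSeveri} (degree $\leq 48\mathbf{c}_{20}$), then spend a further factor of $48$ on Lemma~\ref{lemma:ELS_ellipticSurface} applied to a second fibration. But the step you yourself flag as "the main obstacle" is a genuine gap, and it is exactly the point the paper's argument is built to circumvent. A primitive nef isotropic class $E$ with $E\cdot F>0$ need not exist: there are elliptic K3 surfaces with $\rho=2$ admitting a unique elliptic fibration, where the second isotropic ray of $\Pic(X_{k^s})$ lies outside the nef cone and the Weyl group carries it back onto the ray of $F$ itself. In that situation your construction produces no second fibration at all, and the $\rho=2$ case is not an edge case you can defer --- it is the generic Picard rank for which the lemma must be proved. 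Your proposed fallback (find a $(-2)$-curve by Riemann--Roch) is the right instinct but is not carried out: you need a $(-2)$-curve that actually meets the fibers of $\pi$ positively, and you give no mechanism guaranteeing one.

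The paper resolves this by working with a class that is effective but not assumed nef. Writing the Gram matrix of $\langle H,E\rangle$ (with $H$ ample, $E$ the fiber class) as $\left(\begin{smallmatrix}2a & r\\ r & 0\end{smallmatrix}\right)$, the class $L\colonequals rH-aE$ satisfies $L^2=0$ and $L\cdot H=ar>0$, hence is effective by Riemann--Roch, and decomposes as $L=m_0E'+\sum m_iC_i$ with $E'$ an elliptic pencil class and $C_i$ $(-2)$-curves (by subtracting $(-2)$-curves until the remainder is nef). The computation $L\cdot E=r^2>0$ then forces a dichotomy: either some $C_i$ satisfies $C_i\cdot E>0$ and is itself the desired rational multisection --- defined over $L_1$ already, since $(-2)$-curves are rigid in their classes, so no further extension is needed --- or else $E'\cdot E>0$ and one applies Lemma~\ref{lemma:ELS_ellipticSurface} to the fibration $\pi_{E'}$ exactly as you propose. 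Your write-up, by insisting on always producing a second fibration, has no route through precisely the configurations (a single elliptic fibration, all other isotropic directions non-nef) where the $(-2)$-curve branch is the only one available.
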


\begin{proof}
By hypothesis, the group $\Pic(X)$ contains primitive classes $H$ and $E$ where $H$ is ample and $E$ defines an elliptic fibration. They span a sublattice $\langle H, E\rangle\subset\Pic(X)$ with intersection products
$$
\begin{pmatrix}
2a & r\\
r & 0
\end{pmatrix},
\;\text{ where }\;
a>0\;\text{ and }\; r>0.
$$
Define $L\colonequals rH-aE$. Then $L^2 = 0$ and $L\cdot H =ar>0$, which imply that $L$ is effective by Riemann--Roch. We claim that there exist an elliptic curve $E'$ and $(-2)$-curves $C_1,\dots,C_\ell$ over the algebraic closure $\overline{k}$ such that
$$
    L = m_0E' + \sum_{i=1}^\ell m_i C_i
    \;\in\;\Pic(X_{\kbar}),
    \;\text{ where }\;
    m_0,\dots,m_\ell>0.
$$
Indeed, if $L$ is nef, then $L = m_0E'$ for some elliptic curve $E'$ and positive integer $m_0$ by \cite{Huy16}*{Proposition~2.3.10}, so the claim holds in this case. If $L$ is not nef, one can follow the procedure in \cite{Huy16}*{Remark~8.2.13} to find $(-2)$-curves $C_i$ and positive integers $m_i$, where $1\leq i\leq \ell$, such that
$L-\sum_{i=1}^\ell m_iC_i$ is effective and nef. Using \cite{Huy16}*{Proposition~2.3.10} again, we conclude that $L-\sum_{i=1}^\ell m_iC_i = m_0E'$ for some elliptic curve $E'$ and $m_0>0$, which proves the claim.

Let us denote the fibration defined by $E$ as $\pi_E\colon X\to\bP^1$ and the fibration defined by $E'$ as $\pi_{E'}\colon X\to\bP^1$.
We claim that $\pi_E$ admits a rational multisection given by
\begin{enumerate}[label=(\alph*)]
    \item\label{multisection_(-2)-curves}
    one of the $(-2)$-curves $C_i$, where $1\leq i\leq\ell$, or
    \item\label{multisection_singularFibers}
    a reduced and geometrically irreducible component $D$ of a singular fiber $X_b$ with minimal possible $\chi(X_b)$ in the fibration $\pi_{E'}\colon X\to\bP^1$.
\end{enumerate}
In the case that $E = E'$, we have
\[
    \sum_{i=1}^\ell m_i(C_i\cdot E)
    = L\cdot E
    = (rH-aE)\cdot E
    = r^2 > 0.
\]
Hence $\pi_E$ admits a multisection as in \ref{multisection_(-2)-curves}. Suppose that $E\neq E'$
and that $C_i$ is not a multisection of $\pi_E$ for every $i$. Then every fiber of $\pi_{E'}$ is as a multisection of $\pi_E$ since
$$
    E'\cdot E =
    \frac{1}{m_0}\left(
        L - \sum_{i=1}^{\ell}m_iC_i
    \right)\cdot E
    = \frac{1}{m_0}L\cdot E
    = \frac{1}{m_0}(rH-aE)\cdot E
    = \frac{1}{m_0}\cdot r^2> 0.
$$
In particular, we can find a multisection as in \ref{multisection_singularFibers} by picking a singular fiber $X_b$ with minimal possible $\chi(X_b)$ and then choosing a reduced and geometrically irreducible component $D\subset X_b$ that appears as a multisection of $\pi_E$.

Now let us find the field extensions where the multisections of types \ref{multisection_(-2)-curves} and \ref{multisection_singularFibers} are defined. By Lemma~\ref{lemma:defineNeronSeveri}, there exists an extension $L_1/k$ of degree at most $48\mathbf{c}_\rho \leq 48\mathbf{c}_{20}$ such that $\Pic(X_{L_1})\cong\Pic(X_{\kbar})$. Note that $C_i$ for $i=1,\dots,\ell$ are defined over $L_1$ as they are $(-2)$-curves. Therefore, the proof is done if we are in case~\ref{multisection_(-2)-curves}.
Suppose that we are in case~\ref{multisection_singularFibers}. Note that the class $E'$, and thus the fibration $\pi_{E'}$, is defined over $L_1$. By Lemma~\ref{lemma:ELS_ellipticSurface}, there exists an extension $L/L_1$ of degree at most $2\chi(X) = 48$ over which $D$ is birational to $\bP^1_L$.
Moreover,
$
    [L:k] = [L:L_1][L_1:k] = 48^2\mathbf{c}_{20},
$
so the proof is done.
\end{proof}

\begin{prop}
\label{prop:ntRatMultisection}
Let $X$ be an elliptic K3 surface over a number field $k$. Suppose that its associated Jacobian fibration $J$ satisfies $c(J) > 0$. Then there exists an extension $L/k$ of degree at most $48^2\mathbf{c}_{20}$ and a non-torsion rational multisection $\cM\subset X_L$ over $L$.
\end{prop}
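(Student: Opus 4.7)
The plan is to combine the two existence results already in hand: Lemma~\ref{lemma:genusMultisection} produces an auxiliary elliptic K3 surface $Y/k$ on which \emph{every} rational multisection is automatically non-torsion, and Lemma~\ref{lemma:existenceRatMult} produces a rational multisection on any elliptic K3 surface over an extension whose degree is universally bounded by $48^2\mathbf{c}_{20}$. Pushing forward from $Y$ to $X$ via the canonical $k$-rational map between the two fibrations will then transfer non-torsionality through Lemma~\ref{lemma:torsion-torsion}.

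Concretely, the hypothesis $c(J)>0$ lets me apply Lemma~\ref{lemma:genusMultisection} to obtain a prime $p$ and a class $[Y]\in H^1(\bP^1,\cJ)$ with $[X]=p[Y]$ such that every rational multisection of $Y$ is non-torsion. Since $[Y]$ lies in the $k$-group $H^1(\bP^1,\cJ)$, the corresponding elliptic K3 surface $Y$ is defined over $k$, and Lemma~\ref{lemma:existenceRatMult} applied to $Y$ yields a finite extension $L/k$ with $[L:k]\leq 48^2\mathbf{c}_{20}$ together with a rational multisection $\cM_Y\subset Y_L$ defined over $L$. By the choice of $Y$, $\cM_Y$ is non-torsion.

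To descend to $X$, I use the canonical dominant $k$-rational map $\varphi\colon Y\dashrightarrow X$ of elliptic fibrations supplied by diagram \eqref{eqn:existRatMap} (the map sending $\mathcal{L}$ to $\mathcal{L}^{\otimes p}$ on each smooth fiber). Base changing to $L$ and setting $\cM\colonequals\varphi(\cM_Y)\subset X_L$ gives a curve defined over $L$; it is rational because it is the image of a rational curve under a rational map, and it is a multisection since $\varphi$ respects the fibrations and multiplication by $p$ on the smooth fibers is a finite isogeny, so it cannot collapse $\cM_Y$ onto a fiber component. Finally, Lemma~\ref{lemma:torsion-torsion} ensures that $\cM$ is non-torsion precisely because $\cM_Y$ is.

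The only point requiring mild care is that the map $\varphi$ is genuinely defined over $k$ rather than only over $\overline{k}$; this is already built into the statement surrounding \eqref{eqn:existRatMap}, which provides a commutative diagram of \emph{$k$-rational} maps whenever the relation $[X]=n[Y]$ holds in the $k$-group $H^1(\bP^1,\cJ)$. Apart from this bookkeeping, the proof is a direct assembly of Lemmas~\ref{lemma:genusMultisection}, \ref{lemma:existenceRatMult}, and~\ref{lemma:torsion-torsion}, and the degree bound is inherited verbatim from Lemma~\ref{lemma:existenceRatMult} applied to $Y$.
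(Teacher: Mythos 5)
Your proposal is correct and follows essentially the same route as the paper: apply Lemma~\ref{lemma:genusMultisection} to get $Y$ with $[X]=p[Y]$ and all rational multisections non-torsion, apply Lemma~\ref{lemma:existenceRatMult} to $Y$ to produce the multisection over an extension of degree at most $48^2\mathbf{c}_{20}$, and push it forward to $X$ via the map from \eqref{eqn:existRatMap}, invoking Lemma~\ref{lemma:torsion-torsion} to preserve non-torsionality. The extra remarks on $k$-rationality of $\varphi$ and on the image remaining a multisection are sound and simply make explicit what the paper leaves implicit.
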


\begin{proof}
By Lemma~\ref{lemma:genusMultisection}, there exists $[Y]\in H^1(\bP^1,\cJ)$ such that $[X]=p[Y]$ for some prime $p$ and every rational multisection on $Y$ is non-torsion. This fact, together with Lemma~\ref{lemma:existenceRatMult}, implies the existence of an extension $L/k$ of degree at most $48^2\mathbf{c}_{20}$ and a non-torsion rational multisection $\cM\subset Y_L$. The image of $\cM$ under the map $Y_L\dashrightarrow X_L$ gives a rational multisection on $X_L$ that, by Lemma~\ref{lemma:torsion-torsion}, is non-torsion as well.
\end{proof}

%----------Proof of UPD for elliptic K3 surfaces
\subsection{Proof of uniform potential density}
\label{subsect:proof_UPD_ellipticK3}

Let us finish the proof of uniform potential density for elliptic K3 surfaces.

\begin{thm}
\label{thm:ellk3density}
For every elliptic K3 surface $\pi\colon X\to\bP^1$ over a number field $k$, there exists an extension $L/k$ of degree at most $4608\mathbf{c}_{20}$ such that $X(L)$ is dense in Zariski topology.
\end{thm}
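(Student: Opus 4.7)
The plan is to split the argument according to the Hassett invariant $c(J)$ defined in \eqref{eqn:express_c(J)} for the Jacobian fibration $J$ of $X$.

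In the generic case $c(J) > 0$, I would invoke Proposition~\ref{prop:ntRatMultisection} to obtain an extension $L_1/k$ of degree at most $48^2\mathbf{c}_{20}$ together with a non-torsion rational multisection $\cM \subset X_{L_1}$. Since $\cM$ is birational to $\bP^1_{L_1}$, it supplies a Zariski dense set of $L_1$-points on itself; to spread these points over all of $X_{L_1}$ I would follow Bogomolov--Tschinkel: for a sufficiently general $b \in \bP^1(L_1)$ the intersection $\cM \cap X_b$ consists of $\deg(\cM)$ points whose pairwise differences give \emph{non-torsion} elements of the Jacobian $J_b$, and translating a chosen intersection point by the subgroup they generate produces infinitely many new $L_1$-points in $X_b$. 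Letting $b$ range over a dense subset of $\bP^1(L_1)$, the Zariski closure of the resulting orbits is forced to be two-dimensional precisely because $\cM$ is non-torsion. To ensure the fiberwise Mordell--Weil action is defined over the ground field, at most one further quadratic base change $L/L_1$ should be needed, giving the target bound $[L:k] \leq 2 \cdot 48^2 \mathbf{c}_{20} = 4608\mathbf{c}_{20}$.

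In the remaining degenerate case $c(J) \leq 0$, the formula \eqref{eqn:express_c(J)} sharply constrains the Kodaira types of the singular fibers of $J$, and in this narrow range the K3 surface $X$ is geometrically related to a two-dimensional abelian variety $A$ via a Kummer or Shioda--Inose construction. Here I would first use Lemma~\ref{lemma:defineNeronSeveri} to descend the full Picard lattice to an extension of degree at most $48\mathbf{c}_{20}$ (uncovering the correspondence over that field), then apply Corollary~\ref{cor:UPD_abelianVariety} together with the explicit $g = 2$ bound from Theorem~\ref{thm:nondeg_abelianVariety} to produce dense rational points on $A$ over a bounded further extension, and finally transfer these back to $X$ through the correspondence. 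Because $g = 2$ is fixed, the extra factor picked up in this case is an absolute constant and stays comfortably within $4608\mathbf{c}_{20}$.

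The main obstacle will be the density step in Case~1: converting a single non-torsion rational multisection into a genuinely Zariski dense subset of $L$-rational points on $X$ requires verifying that the iterated Mordell--Weil orbits sweep out a two-dimensional subset rather than remaining within a union of multisections, and that no additional extensions beyond the quadratic rigidification are needed. A secondary difficulty is tracking the exact numerical bound through Case~2, in particular controlling the degrees introduced by the Shioda--Inose isogeny between $X$ and the associated abelian surface; these extensions must be composed with those of Lemma~\ref{lemma:defineNeronSeveri} and Theorem~\ref{thm:nondeg_abelianVariety} without overshooting the $4608\mathbf{c}_{20}$ target.
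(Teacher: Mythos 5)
Your Case~1 is essentially the paper's argument for the case $c(J)>0$, and the degree bookkeeping ($2\cdot 48^2\mathbf{c}_{20}=4608\mathbf{c}_{20}$) matches. One caution on the density step you flag as the main obstacle: the $\deg(\cM)$ geometric points of $\cM\cap X_b$ are in general not individually $L$-rational, so you cannot literally translate by ``the subgroup they generate.'' The paper instead uses the $L$-rational map $\varphi\colon X\dashrightarrow J(X)$, $\varphi(x)=d_{\cM}[x]-[\cM_{\pi(x)}]$, sends each $z\in\cM(L)$ to $\varphi(z)\in J_{\pi(z)}(L)$, and invokes Merel's theorem to conclude that for all but finitely many $z$ the point $\varphi(z)$ has infinite order (non-torsionness of $\cM$ only rules out that $\varphi(\cM)$ lies in the union of low-order torsion multisections; Merel is needed to exclude $L$-rational torsion of large order). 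With that mechanism the fibers $X_{\pi(z)}$, being trivial torsors under $J_{\pi(z)}$ with $J_{\pi(z)}(L)$ infinite, each acquire infinitely many $L$-points, and density follows.

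The genuine gap is in your Case~2. The case $c(J)\le 0$ must be split according to $\rho(X_{\kbar})<20$ versus $\rho(X_{\kbar})=20$, and for the singular case $\rho=20$ your proposed transfer of points fails for a structural reason: the Shioda--Inose construction gives a degree-$2$ \emph{dominant rational map from $X$ onto} $\mathrm{Km}(A)$, not the other way around, so dense rational points on $A$ (or on $\mathrm{Km}(A)$) cannot be pushed up to $X$. The paper handles $\rho=20$ by a completely different argument: by Shioda--Inose, $X_{\kbar}$ carries an elliptic fibration with infinitely many sections, hence an infinite-order translation automorphism; Proposition~\ref{prop:automorphism} (after descending $\Pic$ via Lemma~\ref{lemma:defineNeronSeveri}) shows a power of it is defined over a bounded extension, and applying it to the $L'$-points of a rational multisection from Lemma~\ref{lemma:existenceRatMult} yields density. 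Even in the Kummer subcase $\rho<20$, the paper does not pass through an abelian surface $A$ with $X=\mathrm{Km}(A)$ (which would require bounding the field of definition of $A$ and of the degree-$2$ map $A\dashrightarrow X$, a nontrivial descent issue you do not address); it instead exploits the isotrivial fibration with four $I_0^*$ fibers to build a bielliptic surface $Y$ dominating $X$ over a degree-$\le 8$ extension, where $Y$ is a finite quotient of a product of two elliptic curves, and then applies Corollary~\ref{cor:UPD_ellipticCurve} twice. As written, your Case~2 does not yield a proof for singular elliptic K3 surfaces.
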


The Jacobian fibration $J = J(X)$ determines an invariant $c(J)$ that was defined by Hassett \cite{Has03}*{Corollary~9.5}. (See \eqref{eqn:express_c(J)} for the definition.) We proceed the proof with respect to $c(J)$ and the geometric Picard number $\rho(X_{\kbar})$ in the following three cases:
\begin{enumerate}[label=(\alph*)]
    \item\label{c(J)-positive}
    $c(J) > 0$, which allows us to use Proposition~\ref{prop:ntRatMultisection}.
    \item\label{c(J)-leq0_Kummer}
    $c(J)\leq 0$ and $\rho(X_{\kbar}) < 20$. In this case, $X$ is a Kummer surface \cite{Has03}*{Remark~9.7}.
    \item\label{c(J)-leq0_singular}
    $\rho(X_{\kbar}) = 20$, that is, $X$ is \emph{singular} in the context of Shioda and Inose \cite{SI77}.
\end{enumerate}

%-----The proof for the case c(J)>0
\begin{proof}[Proof of Theorem~\ref{thm:ellk3density} for case~\ref{c(J)-positive}]

By Proposition~\ref{prop:ntRatMultisection}, there exists an extension $L/k$ of degree at most $48^2\mathbf{c}_{20}$ and a non-torsion rational multisection $\cM\subset X_L$ defined over $L$. The normalization of $\cM$ is a smooth rational curve, hence, upon replacing $L$ by a quadratic extension of itself if necessary, we can assume that $\cM(L)$ is dense in $\cM$. Now we have
\[
    [L:k]\leq 2\cdot 48^2\mathbf{c}_{20}
    = 4608\mathbf{c}_{20}.
\]
Note that $\cM$ defines a rational map $\varphi\colon X\dashrightarrow J\colonequals J(X)$ by sending a point $x\in X_{\kbar}$ lying on a smooth fiber to
$$
    \varphi(x)\colonequals d_{\cM}[x] - [\cM_{\pi(x)}]\in\Pic^0(X_{\pi(x)})
$$
where $d_{\cM}$ is the degree of $\cM$.

By Merel's theorem (see, e.g. \cite{Sil09}*{VIII, Theorem~7.5.1}), there exists a constant $N$ depending only on $[L:\bQ]$ such that every elliptic curve $E$ over $L$ satisfies
$
    |E_\text{tor}(L)|\leq N.
$
For every positive integer $n$, the torsion points of order $n$ on the fibers of $J_{\kbar}$ form a multisection $\mathcal{T}_n\subset J$. Let us define
\[
    \mathcal{T}\colonequals
    \bigcup_{n\leq N}\mathcal{T}_n\subset J.
\]
Since $\cM$ is non-torsion, there exist $x,y\in\cM(L)$ lying on the same smooth fiber $X_{\pi(x)}$ such that $[x]-[y]\in\Pic^0(X_{\pi(x)})$ is non-torsion. It follows that
\[
    \varphi(x)-\varphi(y)
    = d_{\cM}([x] - [y])
    \in J_{\pi(x)}
\]
is also non-torsion. This implies that $\varphi(\cM)$ is not contained in $\mathcal{T}$, so the intersection $\varphi(\cM)\cap T$ is zero-dimensional and thus contains only finitely many closed points. Hence, for all but finitely many $z\in\cM(L)$, the image $\varphi(z)\in\varphi(\cM(L))$ is either of infinite order or torsion of order greater than $N$. The latter case is impossible due to Merel's theorem. Thus for all but finitely many $z\in\cM(L)$, the fiber $J_{\pi(z)}$ has infinitely many $L$-points. Translating points in $\cM(L)$ by points in $J_{\pi(z)}$ gives a dense subset of $L$-points on $X_L$.
\end{proof}

\begin{rmk}
The strategy in the second part of the proof of case~\ref{c(J)-positive} has been used previously in literature to prove the density of rational points on surfaces that admits multiple elliptic fibrations over number fields. See, for example, \cite{vL12}*{\S3}, where a more general result was proved.
\end{rmk}

%-----The proof for the case Kummer
\begin{proof}[Proof of Theorem~\ref{thm:ellk3density} for case~\ref{c(J)-leq0_Kummer}]

By \cite{Has03}*{Proposition~9.6}, the fibration $X\to\bP^1$ is isotrivial with four degenerate fibers of type $I_0^*$, represented by the diagram
$$
I_0^*:\quad\begin{aligned}
    \xymatrix@R=7pt{
    *=0{\bullet}\ar@{-}[dr]^(-.07){1}^(.93){2}
    && *=0{\bullet}\ar@{-}[dl]_(-.07){1} \\
    & *=0{\bullet} & \\
    *=0{\bullet}\ar@{-}[ur]^(.07){1}
    && *=0{\bullet}\ar@{-}[ul]_(.07){1}
}\end{aligned}
$$
The locus $B\subset\bP^1$ under the degenerate fibers determines a double cover $E\to\bP^1$ branched at $B$. The fiber product $X\times_{\bP^1}E$ is non-normal along the preimages of the non-reduced components of the degenerate fibers. Its normalization, denoted as $\widetilde{Y}$, contains sixteen $(-1)$-curves that are the preimages of the reduced components of the degenerate fibers. By contracting these $(-1)$-curves, we obtain an isotrivial elliptic fibration $f\colon Y\to E$ with smooth fibers. Let us organize these into a diagram:
$$\xymatrix{
    & \widetilde{Y}\ar[dl]\ar[d] & \\
    Y\ar[d]_-f\ar@{-->}[r]_-\sim & X\times_{\bP^1}E\ar[d]\ar[r]^-{2:1} & X\ar[d] \\
    E\ar@{=}[r] & E\ar[r]^-{2:1} & \bP^1.
}$$

We claim that $Y$ is a \emph{hyperelliptic surface} in the sense of \cite{BM77}*{\S3}, that is, its Kodaira dimension $\kappa(Y)$ is zero and the Albanese variety ${\rm Alb}(Y)$ is an elliptic curve. First of all, one can verify that $\chi(Y) = 0$ by tracking the construction of $Y$. Together with the fact that $K_Y^2 = 0$ for an elliptic surface in general, we obtain $\chi(\cO_Y) = 0$ by Noether's formula. Plugging this into \cite{BM77}*{Theorem~2~(iv)}, we conclude that the canonical bundle $\omega_Y$ of $Y$ is the pullback of a line bundle $\cLL$ on $E$ of degree zero. Hence
$$
    \dim H^0(Y,\omega_Y^{\otimes m})
    = \dim H^0(E,\cLL^{\otimes m})
    = 1
    \quad\text{for }\; m \gg 0.
$$
This shows that $\kappa(Y) = 0$. On the other hand, the second Betti number of $Y$ equals $b_2(Y) = b_2(E) = 2$ \cite{CD89}*{Corollary~5.2.2}, so the irregularity $q(Y) = b_2(Y)/2 = 1$. Thus $\operatorname{Alb}(Y)$ has dimension one, that is, is an elliptic curve. This proves the claim.

Because $E$ is a double cover of $\bP^1$, there exists an extension $L_1/k$ of degree at most $2$ such that $E(L_1)\neq\emptyset$, or equivalently, $E_{L_1}$ is an elliptic curve. By Corollary~\ref{cor:UPD_ellipticCurve}, there exists an extension $L_2/L_1$ of degree at most $2$ such that $E(L_2)$ contains a non-torsion point $p$. In particular, the set $E(L_2)$ is infinite, which implies that $Y_{L_2}$ is a finite quotient of $E_{L_2}\times E'$ for some elliptic curve $E'$ over $L_2$ by \cite{BM77}*{Theorem~4}. (Although \cite{BM77} works over an algebraically closed field, the same arguments work over $K$ assuming $E(K)$ is infinite.) Applying Corollary~\ref{cor:UPD_ellipticCurve} to $E'$, we get an extension $L/L_2$ of degree at most $2$ such that $E'(L)$ contains a non-torsion point $p'$. Then the point $(p,p')\in E_L\times E'_L$ is nondegenerate as it spans a subset whose projections onto $E_L$ and $E'_L$ are Zariski dense. In particular, the $L$-rational points are dense in $E_L\times E'_L$ and thus dense in $Y_L$, which implies that $X(L)$ is dense as well. About the degree of $L/k$,
$$
    [L:k] = [L:L_2][L_2:L_1][L_1:k]
    \leq 2\cdot 2\cdot 2 = 8
    < 4608\mathbf{c}_{20}.
$$
This completes the proof for case~\ref{c(J)-leq0_Kummer}.
\end{proof}

%-----The proof for the singular case
\begin{proof}[Proof of Theorem~\ref{thm:ellk3density} for case~\ref{c(J)-leq0_singular}]

Assume that $\rho(X_{\kbar})= 20$. By Lemma~\ref{lemma:defineNeronSeveri} and Proposition~\ref{prop:automorphism}, there exists a finite extension $L/k$ of degree at most $48\mathbf{c}_{20}$ such that
\begin{enumerate}[label=(\roman*)]
    \item\label{singular_Neron}
    $\Pic(X_{\kbar})\cong\Pic(X_L)$, and
    \item\label{singular_automorphism}
    every $f\in\Aut(X_{\kbar})$ satisfies $f^n\in\Aut(X_L)$ for some integer $n$.
\end{enumerate}
Note that every elliptic fibration on $X_{\kbar}$ is defined over $L$ due to \ref{singular_Neron}.
According to \cite{SI77}*{\S5}, there exists an elliptic fibration
$$\xymatrix{
    \pi\colon X_L\ar[r] & \bP^1
}$$
which admits infinitely many sections over $\kbar$. In other words, the generic fiber $X_\eta$, as a genus one curve over $L(\bP^1)$, contains infinitely many rational points over $K\colonequals\kbar(\bP^1)$. Consider $X_\eta$ as an elliptic curve over $K$. The Mordell--Weil theorem implies that $X_\eta(K)$ is finitely generated, so there exists $\Sigma\in X_\eta(K)$ that is non-torsion. Translation by $\Sigma$ induces an automorphism $\sigma\in\Aut(X_{\kbar})$ of infinite order. Using \ref{singular_automorphism} above, we conclude that there exists an integer $n$ such that
\[
    \tau\colonequals\sigma^n\in\Aut(X_L)
\]
which is again of infinite order. Note that $\tau$ can also be considered as an automorphism on the genus one curve $X_\eta$.

By Lemma~\ref{lemma:existenceRatMult}, after an extension $L'/L$ of degree at most $48$, there exists a rational multisection $\cM$ over $L'$. Extending by a quadratic extension if necessary, we can assume that $\cM(L')$ is infinite.
Let $\cM'\cong\bP^1_{L'}$ be the normalization of $\cM$ and consider the base change
\[\xymatrix{
    \widetilde{X}\colonequals\cM'\times_{\bP^1}X_{L'}\ar[d]\ar[r] & X_{L'}\ar[d]_{\pi}\\
    \cM'\ar[r] & \bP^1.
}\]
Let $\xi$ denote the generic point of $\cM'$. Then the generic fiber $\widetilde{X}_{\xi}$ can be seen as an elliptic curve since $\widetilde{X}$ admits a section
\[
    \widetilde{\cM}\colonequals
    \cM'\times_{\bP^1}\cM.
\]
Moreover, $\tau$ induces an automorphism $\widetilde{\tau}$ on $\widetilde{X}$ of infinite order that acts on $\widetilde{X}_{\xi}$ as an automorphism of a genus one curve.

The orbit of $\widetilde{\cM}_{\xi}$ as a point on $\widetilde{X}_{\xi}$ under $\widetilde{\tau}$ is infinite. Indeed, if not, then $\widetilde{\tau}^\ell(\widetilde{\cM}_{\xi})=\widetilde{\cM}_{\xi}$ for some integer $\ell$, hence $\tau^\ell$ acts as an automorphism on the elliptic curve $(\widetilde{X}_{\xi},\widetilde{\cM}_{\xi})$. However, the automorphism group of an elliptic curve is finite \cite{Sil09}*{III, Theorem~10.1}, which implies that $\widetilde{\tau}$ is of finite order, contradiction. As a consequence, the orbit of $\widetilde{\cM}_{\xi}$ is infinite. It follows that the orbit of the set $\widetilde{\cM}(L')$ under $\widetilde{\tau}$ is dense in $\widetilde{X}$. This produces a dense subset of $L'$-rational points on $X_{L'}$ via the morphism $\widetilde{X}\to X_{L'}$. Note that
\[
    [L':k] = [L':L][L:k]
    \leq2\cdot48^2\cdot\mathbf{c}_{20}
    = 4608\mathbf{c}_{20}
\]
so we finish the proof.
\end{proof}

\section{Brauer groups and uniform potential density}
\label{sect:BrauerUPD}

In this section, we show that uniform potential density holds for a collection of varieties provided that certain conjectures related to the Brauer--Manin obstruction hold.

%----------Brauer--Manin obstructions and related conjectures
\subsection{Brauer--Manin obstructions and related conjectures}
\label{subsect:conjectureBrauerManin}

For a comprehensive introduction to the Brauer--Manin obstruction, we refer the reader to \cite{Poo17}*{Chapter 8}. Let us recall only necessary definitions here. Given a smooth and geometrically integral variety $X$ over a number field $k$, we consider its Brauer group $\Br(X)$ and the following groups
\begin{align*}
    \Br_0(X)&\colonequals\im(\Br(k)\longrightarrow\Br(X))\\
    \Br_1(X)&\colonequals\ker(\Br(X)\longrightarrow\Br(X_{\kbar})).
\end{align*}
Let $\Omega_k$ be the set of places of $k$. Each $v\in\Omega_k$ induces a homomorphism
$$\xymatrix{
    \inv_v\colon\Br(k_v)\ar@{^(->}[r] & \bQ/\bZ
}$$
called a \emph{local invariant}. It is an isomorphism if $v$ is finite, identifies $\Br(k_v)$ with $\frac{1}{2}\bZ/\bZ$ if $v$ is real, and is the zero map if $v$ is complex.
Using this, we can define the \emph{adelic Brauer--Manin paring}
\begin{equation}
\label{eqn:BMpairing}
\begin{aligned}
    \xymatrix@R=0pt{
    \Br(X)\times X(\A_k)\ar[r] & \bQ/\bZ\\
    (\alpha,\{x_v\})\ar@{|->}[r] & \sum_{v\in\Omega_k}\inv_v(\alpha(x_v)).}
\end{aligned}
\end{equation}
This pairing is trivial on $\Br_0(X)$, so it can be considered as a pairing between $\Br(X)/\Br_0(X)$ and $X(\A_k)$ as well. The subset in $X(\A_k)$ orthogonal to all elements in $\Br(X)$ under this pairing is denoted as $X(\A_k)^{\Br}$.

Let $\cC=\{(X,k)\}$ be a collection of smooth and geometrically integral varieties $X$ over number fields $k$. Recall that $\cC$ satisfies
\begin{itemize}
    \item \emph{uniform boundedness of exponent for Brauer groups (UBEB)} if, for every integer $d>0$, there exists a constant $c_d$ such that $c_d\alpha=0$ for all $\alpha\in\Br(X_L)/\Br_0(X_L)$ where $X/k\in\cC$ and $L/k$ is an extension such that $[L:k]\leq d$;
    \item \emph{Brauer--Manin obstruction is the only one to weak approximation (BMWA)} if for all $X/k\in\cC$ and finite extension $L/k$, we have $X(L)$ dense in $X(\A_L)^{\Br}$.
    \item \emph{Uniform potential local solubility (UPLS)} if, there exists a constant $c$ such that for all $X/k\in \cC$, there exists an extension $L/k$ of degree $\leq c$ such that $X_L$ is everywhere locally soluble.
\end{itemize}

For example, the collection $\cC$ of geometrically rational surfaces over number fields satisfies UBEB since this holds for del Pezzo surfaces (see, e.g., \cite{Cor07}*{Theorem~4.1} for a list of possible Brauer groups for del Pezzo surfaces) and conic bundles (see, e.g., \cite{CTS19}*{Lemma~10.2.2 and Proposition~10.2.3}). It is conjectured by Colliot-Th\'{e}l\`{e}ne that if $\cC$ consists of rationally connected variety defined over number fields, then $\cC$ satisfies BMWA \cite{CT03}*{p.174}. Evidently, uniform potential density for $\cC$ implies UPLS.

For K3 surfaces over number fields, Skorobogatov-Zarhin \cite{SZ08}*{Theorem~1.2} proved that $\Br(X)/\Br_0(X)$ is always finite. Drawing analogy from Merel's theorem for elliptic curves, V\'arilly-Alvarado \cite{VA17} further conjectured that a stronger condition than UBEB holds when restricted to families of K3 surfaces based on the lattice structures of their N\'eron--Severi groups. See also \cite{OS18}, \cite{CC20}, and \cite{OSZ19} for some progress toward this conjecture.

\begin{conj}[\cite{VA17}*{Conjecture~4.6}]
\label{conj:UBEB}
Fix an integer $d>0$ and a primitive lattice $\Delta$ in the K3 lattice $\Lambda_{K3}\colonequals U^{\oplus 3}\oplus E_8(-1)^{\oplus 2}$. Let $\cC=\{(X,k)\}$ be the collection of K3 surfaces $X$ over number fields $k$ satisfying $\Pic(X_{\kbar})\isom \Delta$ and $[k:\bQ]\leq d$. Then there exists a constant $c_d$ such that $|\Br(X)/\Br_0(X)|< c_d$ for any $X/k\in\cC$;.
\end{conj}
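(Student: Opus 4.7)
The plan is to split $\Br(X)/\Br_0(X)$ using the standard filtration $\Br_0(X)\subset\Br_1(X)\subset\Br(X)$, where $\Br_1(X)\colonequals\ker(\Br(X)\to\Br(X_{\kbar}))$ is the algebraic Brauer group, and bound the two subquotients $\Br_1(X)/\Br_0(X)$ and $\Br(X)/\Br_1(X)$ separately by constants depending only on $\Delta$ and $d$.

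For the algebraic subquotient, the Hochschild--Serre spectral sequence provides an injection $\Br_1(X)/\Br_0(X)\injects H^1(\Gal(\kbar/k),\Pic(X_{\kbar}))$. Since $\Pic(X_{\kbar})\isom\Delta$ is torsion-free of fixed finite rank $r$, and the Galois action preserves the intersection form, the action factors through a finite subgroup $H$ of the orthogonal group of $\Delta$, which is in particular contained in $\GL(r,\bZ)$. By Minkowski (Theorem~\ref{thm:minkowski}), $|H|$ is bounded by a constant depending only on $r$, and $H^1(H,\Delta)$ is then annihilated by $|H|$ with rank at most $r$, giving $|H^1(k,\Delta)|\leq|H|^r$. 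Hence the algebraic subquotient is uniformly bounded in terms of $\Delta$ alone, independently of $k$ and of $d$.

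For the transcendental subquotient there is an injection $\Br(X)/\Br_1(X)\injects\Br(X_{\kbar})^{G_k}$, and via the Kummer sequence the geometric Brauer group is isomorphic to $T\tensor\bQ/\bZ$, where $T\colonequals\Delta^\perp\subset\Lambda_{K3}$ is the transcendental lattice, of rank determined by $\Delta$. The proposal is to invoke the Kuga--Satake construction: attached to $X$ is an abelian variety $A$ whose dimension depends only on $\operatorname{rank}(T)$, and the $\ell$-adic Galois representation on $T\tensor\bZ_\ell$ occurs as a subquotient of the one on $H^1_{\mathrm{et}}(A_{\kbar},\bZ_\ell)^{\tensor 2}$. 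Descending through this correspondence, one reduces the problem to bounding, uniformly in $\ell$, the $G_k$-fixed quotients of the Tate module of $A$.

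The main obstacle is precisely this last step: one needs a uniform bound on Galois-fixed torsion of abelian varieties of bounded dimension over number fields of bounded degree, which is a higher-dimensional analogue of Merel's theorem. Such a bound is currently open in general, although it is known for CM abelian varieties (from which the conjecture for K3 surfaces of CM type has been deduced by Orr--Skorobogatov). A proof along these lines therefore reduces Conjecture~\ref{conj:UBEB} to this higher-dimensional Merel-type statement; any unconditional argument would require either a substantially new input controlling $\Br(X_{\kbar})^{G_k}$ directly, or a route that bypasses the Kuga--Satake reduction entirely.
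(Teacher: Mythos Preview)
The statement you are attempting to prove is labeled in the paper as a \emph{Conjecture} (Conjecture~\ref{conj:UBEB}, due to V\'arilly-Alvarado), not as a theorem, and the paper offers no proof of it. The surrounding text explicitly presents it as open, citing \cite{OS18}, \cite{CC20}, and \cite{OSZ19} only as partial progress. So there is no ``paper's own proof'' against which to compare your proposal.

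Your write-up is in fact consistent with this: you do not give a proof but rather a reduction. The bound on the algebraic part $\Br_1(X)/\Br_0(X)$ via $H^1(k,\Pic(X_{\kbar}))$ and Minkowski is correct and standard. For the transcendental part you sketch the Kuga--Satake route and then, entirely appropriately, identify the genuine obstruction: controlling $\Br(X_{\kbar})^{G_k}$ uniformly would follow from a higher-dimensional Merel-type bound on Galois-fixed torsion of abelian varieties of bounded dimension over number fields of bounded degree, and that statement is open. This is exactly why Conjecture~\ref{conj:UBEB} is still a conjecture, and it is the same circle of ideas behind the known CM case (Orr--Skorobogatov). So your proposal should be read not as a proof but as an accurate explanation of why the conjecture is believed and where the difficulty lies; presenting it as a proof would be a gap, since the final step is an unproved hypothesis.
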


For BMWA, we have the following conjecture by Skorobogatov.

\begin{conj}[\cite{Sko09}]\label{conj:BMWA}
K3 sufaces over a number fields satisfy BMWA.
\end{conj}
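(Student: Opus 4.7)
The plan is to adopt the \emph{fibration method} developed by Colliot-Th\'el\`ene, Sansuc, Harari, Wittenberg, Harpaz, and Skorobogatov, which reduces statements of this kind on a surface to a statement about fibers of a morphism to $\bP^1$ together with analytic input on the base. Fix $X/k$ a K3 surface, a finite extension $L/k$, and an adelic point $\{x_v\}\in X(\A_L)^{\Br}$. The goal is to produce an $L$-rational point of $X$ arbitrarily close to $\{x_v\}$ in the adelic topology.

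The first step is to reduce to the case of elliptic K3 surfaces. After enlarging $L$ by a bounded amount (using Lemma~\ref{lemma:defineNeronSeveri}), one can assume that the geometric N\'eron--Severi group is defined over $L$; whenever $\rho(X_{\kbar})\geq 5$ the Picard lattice represents zero and an elliptic fibration $\pi\colon X_L\to\bP^1_L$ exists over $L$. For $\rho(X_{\kbar})\leq 4$ no fibration is available in general, and the strategy breaks down; here one would have to work directly with rational curves or descent under finite \'etale covers as in work of Harpaz--Skorobogatov, an approach I would postpone as a separate subproblem.

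Once a fibration $\pi\colon X_L\to\bP^1_L$ is in hand, the second step is to analyze the exact sequence
\[
    0\longrightarrow \Br_{\rm vert}(X_L/\bP^1_L)/\Br_0
    \longrightarrow \Br(X_L)/\Br_0
    \longrightarrow \Br(X_L)/\Br_{\rm vert},
\]
where $\Br_{\rm vert}$ denotes classes pulled back from the generic fiber's base. The orthogonality of $\{x_v\}$ with respect to vertical classes translates, via reciprocity, into the statement that $\pi(\{x_v\})\in\bP^1(\A_L)$ is orthogonal to the associated collection of Brauer classes on $\bP^1_L$. One then seeks $t\in\bP^1(L)$ adelically close to $\pi(\{x_v\})$ such that (i) the fiber $X_t$ is smooth, (ii) $X_t(\A_L)^{\Br}\neq\emptyset$, and (iii) $X_t$ represents the trivial class in the Tate--Shafarevich group of its Jacobian $J(X_t)$, so that $X_t$ acquires an $L$-rational point. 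Producing such a $t$ is where the hard analytic input enters: it requires Schinzel's hypothesis (H1) for the factorization types appearing in $\pi$, or an unconditional substitute when those polynomials have a special form (Green--Tao--Ziegler for linear factors, Heath-Brown--Moroz for norm forms of small degree, etc.), combined with finiteness of $\Sha(J(X_t))$ for the chosen $t$.

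The third and final step is fiberwise density. Once $X_t(L)\neq\emptyset$, Corollary~\ref{cor:UPD_ellipticCurve} together with Theorem~\ref{thm:gamma_ellipticCurve} (after a further quadratic extension) produces a non-torsion $L$-point on $X_t$, hence a Zariski dense subset of $X_t(L)$ approximating $x_{t,v}$; varying $t$ through a Zariski dense set of values close to $\pi(\{x_v\})$ then yields density of $X(L)$ in $X(\A_L)^{\Br}$.

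The principal obstacle is blunt: the sufficient analytic inputs listed above (Schinzel's hypothesis, finiteness of $\Sha$ in families of elliptic curves over number fields) are themselves wide-open conjectures, and without them one cannot guarantee that the chosen $t\in\bP^1(L)$ produces a fiber with a rational point. A secondary obstacle is the case of low Picard rank, where no fibration exists and one must rely on rigid-analytic or deformation-theoretic methods that are not yet developed for K3 surfaces. For these reasons I expect the realistic outcome of the plan is a \emph{conditional} theorem along the lines of ``\,BMWA holds for elliptic K3 surfaces conditional on Schinzel's hypothesis and finiteness of Tate--Shafarevich groups,'' following the template established by Harpaz--Skorobogatov and Harpaz--Wittenberg in related settings, rather than the full unconditional conjecture of Skorobogatov stated above.
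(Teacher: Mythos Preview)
The statement you are attempting to prove is not a theorem in the paper but an open \emph{conjecture} due to Skorobogatov, recorded there precisely because it is unproved. The paper offers no proof; immediately after stating it, the authors write that BMWA for K3 surfaces ``is far from being proved in general'' and mention only the conditional Kummer-surface case of Skorobogatov--Swinnerton-Dyer. So there is nothing to compare your attempt against.

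Your own final paragraph is the honest assessment: the fibration method you outline is the standard template, but it is conditional on Schinzel's hypothesis and finiteness of Tate--Shafarevich groups, and it does not touch K3 surfaces of geometric Picard rank at most four. That is not a gap in your write-up so much as a reflection of the actual state of the problem. You should present this as a discussion of a plausible conditional strategy, not as a proof of Conjecture~\ref{conj:BMWA}.
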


This conjecture was proved in the special case of Kummer surfaces assuming the finiteness of the relevant Tate--Shafarevich groups \cite{SSD05}. It is far from being proved in general, but various computations have been done to support it, see, for example \cite{Bri06}.

%----------A sufficient condition for uniform potential density
\subsection{A sufficient condition for uniform  potential density}
\label{subsect:UPD-UBEB-BMWA}

Our main result in this section relates UBEB, BMWA, and UPLS to uniform potential density. We first show that one can control the growth of Brauer groups using UBEB. Many variants of the following lemma have appeared in the literature; see Remark \ref{rmk:surjectivity_Brauer}.

\begin{lemma}
\label{lemma:sujectivity_Brauer}
Let $X$ be a smooth projective geometrically integral variety over a number field $k$ such that $\Pic(X_{\kbar})$ is torsion free of finite rank, and that UBEB holds for $X$. For any integer $d$, there exists an extension $L/k$ such that for any extension $K/k$ of degree $d$ linearly disjoint from $L$, the natural map
$$
    \Br(X)/\Br_0(X)\longrightarrow \Br(X_K)/\Br_0(X_K)
$$
is an isomorphism.
\end{lemma}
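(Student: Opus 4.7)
The plan is to decompose $\Br(X)/\Br_0(X)$ into an algebraic and a transcendental part using the Hochschild--Serre spectral sequence, and choose $L$ so that each part is preserved under any base change $K/k$ of degree $d$ linearly disjoint from $L$. The key ingredients will be continuity of the Galois action on $\Pic(X_{\kbar})$, finiteness of $\Br(X_{\kbar})[N]$ via the Kummer sequence, and UBEB to control exponents.

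First I would extract from the Hochschild--Serre spectral sequence $E^{p,q}_2 = H^p(k, H^q_{\mathrm{et}}(X_{\kbar}, \G_m)) \Rightarrow H^{p+q}(X, \G_m)$, together with $H^1(k,\kbar^*)=0$ (Hilbert 90) and $H^3(k,\kbar^*)=0$ (true for every number field), the exact sequence
$$ 0 \to H^1(k,\Pic(X_{\kbar})) \to \Br(X)/\Br_0(X) \to \ker\!\bigl(\Br(X_{\kbar})^{\Gal(\kbar/k)} \to H^2(k,\Pic(X_{\kbar}))\bigr) \to 0, $$
and its analogue over $K$, naturally compatible with base change. Since $\Pic(X_{\kbar})$ is finitely generated, continuity forces the action of $\Gal(\kbar/k)$ on it to factor through $\Gal(L_1/k)$ for some finite extension $L_1/k$.

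Next, I would invoke UBEB at the fixed degree $d$ to obtain $N \colonequals c_d$ annihilating both $\Br(X)/\Br_0(X)$ and $\Br(X_K)/\Br_0(X_K)$ whenever $[K:k]\le d$. The Kummer sequence $0\to \Pic(X_{\kbar})/N \to H^2_{\mathrm{et}}(X_{\kbar},\mu_N) \to \Br(X_{\kbar})[N] \to 0$, combined with the properness of $X$, forces $\Br(X_{\kbar})[N]$ to be finite, and therefore the Galois action on it factors through $\Gal(L_2/k)$ for some finite extension $L_2/k$. I would then set $L \colonequals L_1 L_2$. For any $K/k$ of degree $d$ linearly disjoint from $L$, the inclusion $\Gal(\kbar/K) \subset \Gal(\kbar/k)$ induces an isomorphism $\Gal(KL/K) \isom \Gal(L/k)$, which yields (i) that the restriction $H^i(k,\Pic(X_{\kbar})) \to H^i(K,\Pic(X_{\kbar}))$ is an isomorphism for every $i$, and (ii) that $\Br(X_{\kbar})[N]^{\Gal(\kbar/k)} = \Br(X_{\kbar})[N]^{\Gal(\kbar/K)}$ as subgroups of $\Br(X_{\kbar})[N]$.

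From (i) with $i=1$ the algebraic part $H^1(-,\Pic(X_{\kbar}))$ matches. For the transcendental part, I would note that by UBEB the quotient $\Br(X_F)/\Br_1(X_F)$ ($F\in\{k,K\}$) is $N$-torsion and hence agrees with $\ker\!\bigl(\Br(X_{\kbar})[N]^{\Gal(\kbar/F)} \to H^2(F,\Pic(X_{\kbar}))[N]\bigr)$; combining (i) with $i=2$ and (ii) then shows the two kernels coincide. A five-lemma comparison of the two exact sequences gives the desired isomorphism.

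The main technical hurdle is the transcendental part: without UBEB, the invariants $\Br(X_{\kbar})^{\Gal(\kbar/K)}$ could acquire arbitrarily many new classes as $K$ varies, and no single $L$ would suffice. The role of UBEB is to confine the transcendental quotient to a \emph{finite} Galois module $\Br(X_{\kbar})[N]$, so that a single finite extension $L_2$ captures all the relevant Galois invariants at once; the identification of the transcendental quotient with a kernel of a Hochschild--Serre differential, and the naturality of all constructions in the base field, are the other points that require careful bookkeeping.
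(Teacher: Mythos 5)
Your overall strategy is the same as the paper's: split $\Br(X)/\Br_0(X)$ via the Hochschild--Serre exact sequence, make the Galois actions on $P\colonequals\Pic(X_{\kbar})$ and on $\Br(X_{\kbar})[c_d]$ factor through a fixed finite extension $L$, use linear disjointness to identify the finite quotients over $k$ and over $K$, and finish with the five lemma. The gap is in your claim (i) for $i=2$: the restriction map $H^2(k,P)\to H^2(K,P)$ is \emph{not} an isomorphism, and is not even injective, no matter how $L$ is chosen. For example, with $P=\bZ$ and trivial Galois action one has $H^2(k,\bZ)\cong\Hom_{\mathrm{cont}}(\Gal(\kbar/k),\bQ/\bZ)$, and any nontrivial character cutting out a cyclic subextension of $K/k$ is a nonzero class killed by restriction to $K$; linear disjointness from $L$ is irrelevant here because these classes come from $K$ itself. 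What the isomorphism $\Gal(KL/K)\cong\Gal(L/k)$ actually gives is $H^i(\Gal(L/k),P)\cong H^i(\Gal(KL/K),P)$; for $i=1$ this does compute $H^1(k,P)$ and $H^1(K,P)$, since $H^1(L,P)=\Hom_{\mathrm{cont}}(\Gal(\kbar/L),P)=0$ by torsion-freeness of $P$, but for $i=2$ inflation from the finite quotient is only injective, not surjective.

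This matters for the surjectivity half of your comparison of transcendental parts: you need that a class $\alpha\in\Br(X_{\kbar})[N]^{\Gal(\kbar/K)}=\Br(X_{\kbar})[N]^{\Gal(\kbar/k)}$ whose image under the spectral-sequence differential vanishes in $H^2(K,P)$ already has vanishing image in $H^2(k,P)$, and without injectivity of restriction this does not follow. The repair, which is what the paper does, is to enlarge $L$ once more so that the finitely many images $d_2(\alpha)$, for $\alpha$ ranging over the finite group $\Br(X_{\kbar})[c_d]^{\Gal(\kbar/k)}$, all lie in the inflated subgroup $H^2(\Gal(L/k),P)\hookrightarrow H^2(k,P)$; this is possible because continuous cohomology of $\Gal(\kbar/k)$ in the discrete module $P$ is the colimit over finite quotients and the set of classes in question is finite. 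Functoriality then sends these classes into $H^2(\Gal(LK/K),P)\hookrightarrow H^2(K,P)$, and the isomorphism $H^2(\Gal(L/k),P)\cong H^2(\Gal(LK/K),P)$ together with the injectivity of both inflation maps gives exactly the kernel comparison you need. With that additional enlargement of $L$, your five-lemma argument closes and the proof coincides with the paper's.
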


\begin{proof}
Since $P\colonequals\Pic(X_{\kbar})$ is finitely generated, the action by $\Gal(\kbar/k)$ on $P$ factors through a finite quotient, say $\Gal(L/k)$. Since $\Gal(\kbar/L)$ acts trivially on $P$, we have
$
H^1(L,P)=0.
$
By the inflation restriction exact sequence,
\begin{equation}
\label{eqn:infl-rest}
H^1(\Gal(L/k),P)\isom H^1(k,P), \quad H^2(\Gal(L/k),P)\injects H^2(k,P).
\end{equation}
\iffalse
For any extension $K/k$ linearly disjoint from $L$, the subgroup $\Gal(\kbar/K)\subset\Gal(\kbar/k)$ is onto $\Gal(L/k)$, so $P^{\Gal(\kbar/K)}=P^{\Gal(\kbar/k)}$. Hence
$
    H^i(\Gal(K/k),P^{\Gal(\kbar/K)})=0
$
for all $i>0$ as the action is trivial. Then the natural map given by the inflation-restriction sequence
\begin{equation}
\label{eqn:infl-rest}
\xymatrix{
    H^i(k,P)\ar[r] & H^i(K,P)^{\Gal(K/k)}\cong H^i(K,P).
}
\end{equation}
is isomorphic for $i=1$ and injective for $i=2$.\fi

Let $d>0$ be an integer. By the hypothesis on UBEB, there exists a constant $c_d$ such that for every extension $M/k$ of degree at most $d$,
\begin{equation}
\label{eqn:torsioniso}
    \Br(X_M)/\Br_0(X_M)[c_d]=\Br(X_M)/\Br_0(X_M).
\end{equation}
By \cite{CTS19}*{Proposition~4.2.5}, the set $\Br(X_{\kbar})[c_d]$ is finite. Therefore, upon enlarging $L$ by a finite extension if necessary, we can assume that $\Gal(\kbar/L)$ acts trivially on $\Br(X_{\kbar})[c_d].$ Let $K/k$ be a degree $d$ extension linearly disjoint from $L/k$. Then
\begin{equation}
\label{eqn:isom_torsion}
    \Br(X_{\kbar})[c_d]^{\Gal(\kbar/K)}=\Br(X_{\kbar})[c_d]^{\Gal(\kbar/k)}
\end{equation}

The spectral sequence
$$
    E^{p,q}_2
    = H^p(k,H^q(X_{\kbar},\G_m))
    \implies H^{p+q}(X,\G_m)
$$
gives rise to the following exact sequence (see e.g. \cite{CTS13}*{Proposition~1.3~(i)})
\begin{equation}\label{eqn:brexact}
\xymatrix{
    0\ar[r] & H^1(k,P)\ar[r] & \Br(X)/\Br_0(X)\ar[r] & \Br(X_{\kbar})^{\Gal(\kbar/k)}\ar[r] & H^2(k,P).
}
\end{equation}
Enlarging $L$ by a finite extension if necessary, we assume that the image of $\Br(X_{\kbar})[c_d]^{\Gal(\kbar/k)}$ lies in $H^2(\Gal(L/k),P)$. Replacing $H^i(k,P)$ by $H^i(\Gal(L/k),P)$ in \eqref{eqn:brexact} and taking the $c_d$-torsion elements in \eqref{eqn:brexact} gives the following commutative diagram by functoriality,
$$\small\xymatrix{
    H^1(\Gal(L/k),P)[c_d]\ar@{^(->}[r]\ar[d] & \frac{\Br(X)}{\Br_0(X)}[c_d]\ar[r]\ar[d] & \Br(X_{\kbar})^{\Gal(\kbar/k)}[c_d]\ar[r]\ar[d] & H^2(\Gal(L/k),P)\ar[d]\\
    H^1(\Gal(LK/K),P)[c_d]\ar@{^(->}[r] & \frac{\Br(X_K)}{\Br_0(X_K)}[c_d]\ar[r] & \Br(X_{\kbar})^{\Gal(\kbar/K)}[c_d]\ar[r] & H^2(\Gal(LK/K),P).
}$$
Note that the first two rows of are exact because $\frac{\Br(X)}{\Br_0(X)}$ and $ \frac{\Br(X_K)}{\Br_0(X_K)}$ are both $c_d$-torsion groups by assumption. From \eqref{eqn:isom_torsion}, we know that the third vertical map is an isomorphism. The first and last vertical map is an isomorphism since $\Gal(L/k)\isom\Gal(LK/K)$ compatible with the action on $P$. The third vertical map is an isomorphism on the $c_d$-torsion parts by \eqref{eqn:isom_torsion}. 
Then the five lemma implies that the second vertical map is an isomorphism on the $c_d$-torsion parts, from which we deduce it is an isomorphism on the entire group by \eqref{eqn:torsioniso}.
\end{proof}

\begin{rmk}
\label{rmk:surjectivity_Brauer}
Special cases of Lemma~\ref{lemma:sujectivity_Brauer} have appeared in the literature for varieities where UBEB is known. In \cite{Lia13} and \cite{Har97}, the authors assumed the finiteness of the geometric Brauer group $\Br(X_{\kbar})$. Ieronymou \cite{Ier21} proved it for K3 surfaces $X$, in which case it follows from \cite{OS18} that the $X$ satisfies UBEB.
\end{rmk}

\begin{thm}
\label{thm:bm}
Let $\cC$ be a collection of smooth projective geometrically integral varieties $X/k$ over number fields such that $H^1(X,\cO_X)=0$ and $\Pic(X_{\kbar})$ is torsion-free of bounded rank. If $\cC$ satisfies UBEB, BMWA, and UPLS then it also satisfies uniform potential density.
\end{thm}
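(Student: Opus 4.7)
The strategy is to combine UPLS, UBEB, and BMWA to produce a uniformly bounded extension $L/k$ over which the Brauer--Manin set $X(\A_L)^{\Br}$ is nonempty; BMWA and the local analytic structure of smooth varieties will then upgrade this to Zariski density of $X(L)$ in $X_L$. I would begin by applying UPLS to obtain an extension $L_1/k$ of uniformly bounded degree with $X(\A_{L_1})\neq\emptyset$, and fix an adelic point $(x_v)\in X(\A_{L_1})$. UBEB applied at $[L_1:k]$ then provides a uniform bound $e$ on the exponent of $\Br(X_{L_1})/\Br_0(X_{L_1})$, and this group is finite since $\Br(X_{\kbar})[e]$ is finite and $H^1(L_1,\Pic(X_{\kbar}))$ is finite by the bounded Picard rank hypothesis.

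The heart of the argument is to pass to a further extension $L/L_1$ of degree exactly $e$ over which the obstruction vanishes. For the diagonal lift $(x_w)\in X(\A_L)$ of $(x_v)$, the identity $\sum_{w\mid v}[L_w:L_{1,v}]=[L:L_1]$ gives
\[
    \sum_w\inv_w(\alpha_L(x_w))=[L:L_1]\sum_v\inv_v(\alpha(x_v))\quad\text{for every }\alpha\in\Br(X_{L_1}).
\]
Because $e\alpha\in\Br_0(X_{L_1})$ and global reciprocity forces $\sum_v\inv_v(e\alpha(x_v))=0$, the pairing $\sum_v\inv_v(\alpha(x_v))$ lies in $\tfrac{1}{e}\bZ/\bZ$, so taking $[L:L_1]=e$ annihilates the right-hand side. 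To prevent genuinely new Brauer classes from appearing on $X_L$, I would apply Lemma~\ref{lemma:sujectivity_Brauer} to $X_{L_1}/L_1$ with parameter $d=e$: it yields an auxiliary finite extension $M/L_1$ such that for any $L/L_1$ of degree $e$ linearly disjoint from $M$, the restriction $\Br(X_{L_1})/\Br_0(X_{L_1})\to\Br(X_L)/\Br_0(X_L)$ is an isomorphism. Any cyclic degree-$e$ extension of $L_1$ not contained in $M$ works, and infinitely many exist by class field theory, so I fix such an $L$; the lifted adelic point $(x_w)$ then lies in $X(\A_L)^{\Br}$, and $[L:k]=e\cdot[L_1:k]$ is uniformly bounded over $\cC$.

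Having secured a nonempty $X(\A_L)^{\Br}$, BMWA gives that $X(L)$ is dense in $X(\A_L)^{\Br}$. Since $\Br(X_L)/\Br_0(X_L)$ is finite, only finitely many Brauer classes contribute to the obstruction and each is supported at finitely many places, so the projection of $X(\A_L)^{\Br}$ to $X(L_w)$ is open (indeed surjective) for almost every place $w$. The density of $X(L)$ in a nonempty $w$-adic open subset of $X(L_w)$, combined with the fact that any nonempty $w$-adic open subset of $X(L_w)$ is Zariski dense in $X_L$ by smoothness, yields Zariski density of $X(L)$ in $X_L$. The main technical obstacle is the uniformity: Lemma~\ref{lemma:sujectivity_Brauer} supplies $M/L_1$ of a priori unbounded degree, but the flexibility in choosing a cyclic degree-$e$ extension of $L_1$ disjoint from any fixed finite extension preserves the uniform bound on $[L:k]$, which ultimately rests on UBEB applied at the bounded degree $[L_1:k]$.
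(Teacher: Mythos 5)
Your proposal is correct and follows essentially the same route as the paper's proof: reduce to the everywhere locally soluble case via UPLS, use UBEB together with Lemma~\ref{lemma:sujectivity_Brauer} to find a degree-$e$ extension over which the Brauer group does not grow while the local-degree identity $\sum_{w\mid v}[L_w:L_{1,v}]=e$ annihilates the Brauer--Manin pairing on the lifted adelic point, and then invoke BMWA plus the finiteness (hence adelic openness) of $\Br(X_L)/\Br_0(X_L)$ to get Zariski density. Your added care about choosing the degree-$e$ extension linearly disjoint from the auxiliary field supplied by the lemma is a welcome clarification of a point the paper passes over quickly, but it is not a different argument.
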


\begin{proof}
Using UPLS, up to a bounded extension of $k$, we can assume that $X_k$ is everywhere locally soluble. By UBEB, there exists an integer $N$ such that for any $X/k\in\cC$, we have
$$N(\Br(X)/\Br_0(X))=0.$$
As a corollary to Lemma \ref{lemma:sujectivity_Brauer}, there exists an extension $K/k$ of degree $N$ such that
\begin{equation}\label{eqn:brsurj}
    \Br(X)/\Br_0(X)\longrightarrow \Br(X_K)/\Br_0(X_K)
\end{equation}
is surjective.
\iffalse
By Lemma~\ref{lemma:galoisInvariantNS}, for any $X/k\in~\cC$, there exists $K/k$ with $[K:k]\leq \mathbf{c}_{20}$ such that $\Pic(X_{\kbar})^{\Gal(\kbar/K)}\cong\Pic(X_{\kbar})$. Since $\Gal(\Qbar/K)$ acts trivially on $\Pic(X_{\kbar})\cong\Pic(X_{\kbar})$, we have
\begin{equation}\label{eqn:algvanish}
    \Br_1(X_K)/\Br_0(X_K)\isom H^1(K,\Pic(X_{\kbar}))=0
\end{equation}
By UBEB, there exists an integer $N$ such that for any $X/k\in\cC$, we have
$$N(\Br(X_K)/\Br_0(X_K))=0.$$
We can apply Lemma \ref{lemma:sujectivity_Brauer} by \eqref{eqn:algvanish}, so there exists an extension $L/K$ of degree $N$ such that
\begin{equation}\label{eqn:brsurj}
    \Br(X_K)/\Br_0(X_K)\longrightarrow \Br(X_L)/\Br_0(X_L)
\end{equation}
is surjective.
\fi

We now show that $X(\A_K)^{\Br}\neq\emptyset$. Let $\alpha\in\Br(X_K)$. By \eqref{eqn:brsurj}, there exists $\beta\in\Br(X)$ that maps to $\alpha$ modulo $\Br_0(X_K)$. Since the Brauer--Manin pairing is trivial on $\Br_0(X_K)$, we can assume without loss of generality that the image of $\beta$ is in fact $\alpha$. If $w\in\Omega_K$ is a place lying over $v\in\Omega_k$, then denote $e_w=[K_w:k_v]$. For any point $x_v\in X(k_v)$ with image $x_w\in X(K_w)$, we have the following commutative diagram.
$$\xymatrix{
    \Br(X_{k_v})\ar[r]^{x_v^*}\ar[d] & \Br(k_v)\ar[r]^{\inv_v}\ar[d] & \bQ/\bZ\ar[d]^{\times e_w} \\
    \Br(X_{K_w})\ar[r]^{x_w^*} & \Br(K_w)\ar[r]^{\inv_w} & \bQ/\bZ
}$$
Let $(x_v)\in X(\A_k)$ with image $(x_w)\in X(\A_K)$. Using the diagram above, we have
\begin{align*}
    \sum_{w\in \Omega_K}\inv_w \alpha(x_w)&=\sum_{v\in\Omega_k}\sum_{w\mid v}\inv_w \alpha(x_w)\\
    &=\sum_{v\in\Omega_k}\sum_{w\mid v}e_w\inv_v \beta(x_v)\\
    &=\sum_{v\in\Omega_k} N\inv_v\beta(x_v)=0.
\end{align*}
The last equality follows from the fact that $N\beta\in \Br_0(X)$. So $(x_w)\in X(\A_K)^{\Br}$ by the Brauer--Manin pairing \eqref{eqn:BMpairing}. Our hypothesis on BMWA says that $X(K)$ is dense in $X(\A_K)^{\Br}$. In the exact sequence
$$0\longrightarrow \Br_1(X_K)/\Br_0(X_K)[N]\longrightarrow \Br(X_K)/\Br_0(X_K)[N]\longrightarrow \Br(X_{\kbar})[N]\longrightarrow0,$$
we have that $\Br(X_{\kbar})[N]$ is finite by \cite{CTS19}*{Proposition~4.2.5} and $\Br_1(X)/\Br_0(X)\isom H^1(K,\Pic(X_{\kbar}))$ is finite since $\Pic(X_{\kbar})$ is torsion free of finite rank. Hence, $\Br(X_K)/\Br_0(X_K)$ is finite, which implies that $X(\A_K)^{\Br}$ is a non-empty open subset in $X(\A_K)$ in the adelic topology. Hence $X(K)$ must be Zariski dense. \end{proof}

\begin{cor}
Fix a primitive lattice $L\subset \Lambda_{K3}$ and integer $d>0$. Let $\cC=\{(X,k)\}$ be the collection of all K3 surfaces $X$ over number fields $k$ that are everywhere locally soluble, satisfying $\Pic(X_{\kbar})\isom L$ and $[k:\bQ]\leq d$. Assume Conjectures \ref{conj:UBEB} and \ref{conj:BMWA}. Then $\cC$ satisfies uniform potential density.
\end{cor}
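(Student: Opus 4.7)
The plan is to deduce the corollary as a direct application of Theorem~\ref{thm:bm}, so the proof reduces to verifying that the collection $\cC$ satisfies each of the hypotheses of that theorem. There is essentially no new geometric input required, only a checklist.

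First I would verify the structural hypotheses on members of $\cC$. Every K3 surface is smooth, projective, and geometrically integral by definition, and satisfies $H^1(X,\cO_X)=0$ as part of the standard definition. For a K3 surface $X/k$ one has that $\Pic(X_{\kbar})$ is torsion-free of finite rank (at most $22$, but in our case fixed to be $\operatorname{rank}(L)$ by the hypothesis $\Pic(X_{\kbar})\cong L$). So the hypotheses $H^1(X,\cO_X)=0$ and ``$\Pic(X_{\kbar})$ torsion-free of bounded rank'' are automatic for all $X/k\in\cC$.

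Next I would verify the three abstract conditions (UBEB, BMWA, UPLS) for $\cC$. UPLS is the easiest: since every $X/k\in\cC$ is everywhere locally soluble by hypothesis, one can take the trivial extension $L=k$ (so the constant $c$ in the definition of UPLS equals $1$). UBEB for $\cC$ is exactly the content of Conjecture~\ref{conj:UBEB} applied to the primitive lattice $L\subset\Lambda_{K3}$ and to the degree bound $d$: for any extension $k'/k$ with $[k':\bQ]\leq d'$ for some fixed $d'$, the surface $X_{k'}$ still lies in the family covered by V\'arilly-Alvarado's conjecture (with the degree over $\bQ$ bounded by $d\cdot d'$ and the same geometric Picard lattice $L$), so $|\Br(X_{k'})/\Br_0(X_{k'})|$ is uniformly bounded, and thus so is its exponent. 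Finally, BMWA for K3 surfaces over number fields is exactly Conjecture~\ref{conj:BMWA}, which applies to every finite extension of $k$ as well since the extensions considered are themselves number fields and the conjecture is made over arbitrary number fields.

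With all three conditions verified, Theorem~\ref{thm:bm} applies directly to $\cC$ and yields uniform potential density. There is no genuine obstacle: the only subtlety worth flagging is that UBEB in the formulation used by Theorem~\ref{thm:bm} is a statement for \emph{every} extension $L/k$ of bounded degree, so one must observe that Conjecture~\ref{conj:UBEB} indeed applies after passing to such an extension. This follows because $\Pic(X_{\kbar})\cong L$ is a geometric invariant (unchanged under extension of the ground field), and $[L:\bQ]\leq d\cdot[L:k]$ remains bounded by a constant depending only on $d$ and the fixed degree bound in UBEB.
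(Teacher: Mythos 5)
Your proposal is correct and follows exactly the route of the paper: the corollary is a direct application of Theorem~\ref{thm:bm}, with $H^1(X,\cO_X)=0$ and the torsion-free bounded-rank Picard group automatic for K3 surfaces, UPLS given by the local-solubility hypothesis, and UBEB/BMWA supplied by Conjectures~\ref{conj:UBEB} and~\ref{conj:BMWA}. Your extra remark that Conjecture~\ref{conj:UBEB} must be applied after a bounded base extension (which works because $\Pic(X_{\kbar})$ is a geometric invariant and the degree over $\bQ$ stays bounded) is a point the paper leaves implicit, and is correctly handled.
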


\begin{proof}
Apply Theorem \ref{thm:bm} since K3 surfaces satisfy $H^1(X,\cO_X)=0$ and $\Pic(X_{\kbar})$ is torsion-free of bounded rank.
\end{proof}

\bigskip
\bibliography{PotentialDensityK3_bib}
\bibliographystyle{alpha}

\ContactInfo
\end{document}